\documentclass[11pt,a4paper]{article}
\usepackage[T1]{fontenc}
\usepackage[utf8]{inputenc}
\usepackage{lmodern}
\usepackage[margin=28mm]{geometry}
\usepackage{amsmath,amssymb,amsthm,mathtools}
\usepackage{microtype}
\usepackage{enumitem,booktabs,array}
\usepackage{xcolor}
\usepackage[colorlinks=true,linkcolor=blue!75!black,citecolor=blue!75!black,urlcolor=blue!75!black]{hyperref}
\hypersetup{pdftitle={Curvature-Dimension Conditions Along Discrete Refinement Sequences},pdfauthor={C.TAO}}
\numberwithin{equation}{section}
\newtheorem{theorem}{Theorem}[section]
\newtheorem{proposition}[theorem]{Proposition}
\newtheorem{lemma}[theorem]{Lemma}
\newtheorem{corollary}[theorem]{Corollary}
\theoremstyle{definition}
\newtheorem{definition}[theorem]{Definition}

\newtheorem{example}[theorem]{Example}

\theoremstyle{remark}
\newtheorem{remark}[theorem]{Remark}
\newcommand{\R}{\mathbb R}

\newcommand{\Pp}{\mathcal P}
\newcommand{\dd}{\mathrm d}
\newcommand{\eps}{\varepsilon}

\newcommand{\SCD}{\mathrm{SCD}}
\newcommand{\SCDmid}{\mathrm{SCD}_{\mathrm{mid}}}
\newcommand{\CDe}{\mathrm{CD}^{e}}
\newcommand{\ruleop}{\mathcal R}

\DeclareMathOperator{\Ent}{Ent}
\DeclareMathOperator{\supp}{supp}
\DeclareMathOperator{\diam}{diam}
\DeclareMathOperator{\Lip}{Lip}

\setlist[enumerate]{leftmargin=*,itemsep=3pt,topsep=5pt}
\setlist[itemize]{leftmargin=*,itemsep=3pt,topsep=5pt}
\title{CURVATURE--DIMENSION CONDITIONS ALONG DISCRETE REFINEMENT SEQUENCES\\[0.35em]}

\author{CHEN TAO
\thanks{%
School of Mathematics and Statistics,
Northeast Normal University,
Changchun, Jilin 130024, China.
E-mail: \href{mailto:taoc387@nenu.edu.cn}%
{\texttt{taoc387@nenu.edu.cn}}.}}
\date{}

\begin{document}

\maketitle
\begin{abstract}
We introduce the sequence curvature--dimension condition
$\SCD(K,N)$ for complete refinement systems of finite metric
probability spaces.
The condition couples a transport action bound with entropy
comparisons along a single probability section through the
prescribed refinement layers.
Within a bounded comparison range, we prove that midpoint
comparisons on three consecutive layers characterize the condition
on every fixed finite segment.
If, in addition, the refinement sequence converges in the measured
sense in its intrinsic bridge realization, then $\SCD(K,N)$
is equivalent both to the vanishing of single-layer midpoint defects
and to the entropic curvature--dimension condition $\CDe(K,N)$
of the limit.
In the reverse direction, a $\CDe(K,N)$ bound on the limit
yields quantitative recovery estimates that identify shrinking
transport scales on which finite layers retain its second-order
curvature comparison.
These estimates are uniform over endpoint measures
and include Dirac masses.
Examples illustrate the effects of the refinement rule,
the layer metrics and the reference measures.
\end{abstract}
\medskip
\noindent\textbf{Keywords.}
Curvature--dimension conditions;
discrete refinement sequences;
optimal transport;
entropy convexity;
metric measure convergence.
\tableofcontents
\clearpage


\section*{Introduction}
\phantomsection
\addcontentsline{toc}{section}{Introduction}
\label{sec:introduction}

Synthetic lower Ricci bounds express curvature through entropy
comparisons along optimal transport. McCann's displacement convexity
\cite{McCann1997} and the curvature--dimension theories of Lott--Villani
and Sturm \cite{LV2009,Sturm2006I,Sturm2006II} provide a framework
that is stable under measured convergence. On a finite discrete space,
however, Dirac measures at two nearest distinct points admit no
midpoint for the quadratic Wasserstein distance. 
One way to accommodate this obstruction is to allow approximate
interpolation at a prescribed spatial scale.
This approach underlies the rough curvature bounds of
Bonciocat--Sturm \cite{BonciocatSturm2009}
and Bonnefont's approximate Brunn--Minkowski inequality
\cite{Bonnefont2009}.
Bonciocat \cite{Bonciocat2014} develops a finite-dimensional
rough curvature--dimension condition with stability and
discretization theorems.
Kazukawa, Ozawa and Suzuki \cite{KazukawaOzawaSuzuki2020}
prove further stability results for rough curvature--dimension
conditions.

We introduce a sequence curvature--dimension condition $\SCD(K,N)$ 
for refinement systems.
Refinement is a basic tool for studying discrete spaces.
A refinement sequence of a given discrete object may reveal 
some hidden geometric features at successive resolutions,
while the connections between successive refinement layers provide an additional direction
for geometric comparison.
Unlike the classical or rough curvature--dimension conditions,  
which consider comparisons within a fixed metric measure space,
we regard the entire sequence generated by a given refinement rule 
as a single geometric object,
and study transport and entropy comparisons across successive layers.
On each fixed finite segment, a single sequence of intermediate
measures must satisfy both the transport action bound and the
entropy comparisons at every interior layer.

In the present paper, we establish three main results.
First, midpoint comparisons on 
three consecutive refinement layers characterize
the full condition on every fixed finite segment, 
without assuming convergence.
Under measured convergence in the intrinsic bridge realization,
we then prove that $\SCD(K,N)$ is equivalent to the entropic
curvature--dimension condition $\CDe(K,N)$ on the limit.
Furthermore, 
if the limit satisfies this condition, 
quantitative recovery estimates identify shrinking transport scales 
on which finite layers retain the second-order curvature comparison of the limit.
Together, these results connect entropy comparisons along the
refinement sequence with curvature bounds on the limit and their
quantitative recovery on finite layers.

Other approaches construct interpolations on a fixed graph:
Gozlan, Roberto, Samson and Tetali \cite{GozlanRobertoSamsonTetali2014}
study displacement convexity of entropy; L\'eonard \cite{Leonard2016}
uses slowed reference random walks; and Samson \cite{Samson2022}
studies entropic curvature along zero-temperature Schr\"odinger bridges.
The transport metrics of Maas, Erbar--Maas and Mielke
\cite{Maas2011,ErbarMaas2012,Mielke2013} are adapted to reversible
Markov chains and their entropy gradient flows. Ollivier's curvature
\cite{Ollivier2009} uses contraction of transition probabilities in
$W_1$.

For modified discrete transport metrics, Gigli--Maas
\cite{GigliMaas2013} prove convergence from discrete tori to the
continuous torus, while Gladbach, Kopfer and Maas
\cite{GladbachKopferMaas2020} identify the role of mesh isotropy
in discretizations of Euclidean domains.
Our comparisons use the usual metric Wasserstein distance,
with the layer metrics and the prescribed bridges determining
the transport geometry.

We now fix the basic setting and notation for the theory.
Let $\Sigma$ be a complete refinement system as in
Section~\ref{sec:refinement}, 
with finite layers $X_n=(V_n,d_n,m_n)$ 
and intrinsic bridge realization $(Z,d_Z)$.
For some sufficiently large $R$, assume that
$\bigcup_{n\ge R}V_n$ has diameter at most $L>0$ in $Z$.
Fix $K \in\mathbb R$ and $1 \le N < \infty$, 
with $ KL^{2} < N\pi^{2}$ when $K>0$, 
and write $W=W_{2,Z}$ and $\kappa=K/N$.

\paragraph{Midpoint characterization.}
A sequence $\boldsymbol\mu=(\mu_0,\ldots,\mu_J)$ with
$\mu_j\in\Pp(V_{r+j})$ is called an \emph{$\eta$-admissible probability section}
if its transport action is at most $D^2+\eta L^2$, 
where $D=W(\mu_0,\mu_J)$ \eqref{eq:section-action}.
This bound expresses approximate constant-speed transport
through the prescribed refinement layers. 
For every fixed $J\ge2$ and $\eta>0$, complete refinement
ensures the existence of such sections on all sufficiently late
segments, uniformly over the endpoints
(Theorem~\ref{thm:admissible-existence}).

Let $H_n$ be the relative entropy with respect to $m_n$ and
write $U_n=e^{-H_{n}/N}$.
Following the entropic formulation of Erbar--Kuwada--Sturm
\cite[Definition~3.1]{EKS2015}, 
the condition $\SCD(K,N)$ requires, 
for every fixed $J\ge2$ and $\eps>0$, 
that each endpoint pair on every sufficiently late segment admit one
$\eps$-admissible section satisfying
\[
 U_{r+j}(\mu_j)\ge
 \sigma_\kappa^{(1-j/J)}(D)U_r(\mu_0)
 +\sigma_\kappa^{(j/J)}(D)U_{r+J}(\mu_J)-\eps,
 \qquad 1\le j<J.
\]
Here $\sigma_\kappa^{(t)}$ is the comparison coefficient defined
in Section~\ref{sec:entropy}.
The case $J=2$ gives the midpoint condition $\SCDmid(K,N)$.

\theoremstyle{plain}
\newtheorem{introtheorem}{Theorem}
\renewcommand{\theintrotheorem}{\Alph{introtheorem}}

\begin{introtheorem}[Midpoint characterization]
\label{intro:midpoint}
Under the standing assumptions,
\[
 \Sigma\text{ satisfies }\SCD(K,N)
 \quad\Longleftrightarrow\quad
 \Sigma\text{ satisfies }\SCDmid(K,N).
\]
\end{introtheorem}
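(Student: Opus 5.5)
The forward implication is immediate: $\SCDmid(K,N)$ is by definition the case $J=2$ of $\SCD(K,N)$. All the work is in the reverse direction, which I would prove by a dyadic (or more generally iterative) gluing argument modelled on the classical proof that midpoint convexity implies full convexity for $\CDe$ (cf.\ \cite{EKS2015}).

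The plan is as follows. Fix $J\ge2$ and $\eps>0$. I first treat $J=2^k$ by induction on $k$ and then handle general $J$ by subdividing and interpolating. Given endpoints $\mu_0\in\Pp(V_r)$ and $\mu_J\in\Pp(V_{r+J})$ on a late segment, apply $\SCDmid$ with tolerance $\eps'\ll\eps$ on the three layers $r,r+J/2,r+J$. Strictly, $\SCDmid$ concerns three \emph{consecutive} layers. So I would first check whether the definition of $\SCD$ on a segment of length $J$ allows sub-segments of spacing $J/2$. If it does not, the induction must run on consecutive triples $(\mu_{j-1},\mu_j,\mu_{j+1})$ instead. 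In that version one starts from an $\eta$-admissible section provided by Theorem~\ref{thm:admissible-existence} and improves it layer by layer, replacing each interior $\mu_j$ by a midpoint of its neighbours supplied by $\SCDmid$.

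That relaxation/iteration is the version I would actually pursue. The function $j\mapsto U_{r+j}(\mu_j)$ becomes approximately ``$\kappa$-concave in the discrete sense'': each triple satisfies
\[
U_{r+j}(\mu_j)\ge \sigma_\kappa^{(1/2)}(d_j)\bigl(U_{r+j-1}(\mu_{j-1})+U_{r+j+1}(\mu_{j+1})\bigr)-\eps',
\]
where $d_j=W(\mu_{j-1},\mu_{j+1})$. A discrete maximum-principle argument for the Jacobi-type operator $f\mapsto f''+\kappa D^2 f/J^2$ then yields the full comparison with the $\sigma^{(t)}_\kappa(D)$ coefficients. The condition $KL^2<N\pi^2$ keeps these coefficients positive and the comparison function positive, which is what makes the maximum principle valid. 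The accumulated error is $O(J\eps')$, which can be made less than $\eps$ because $J$ is fixed.

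Two consistency points must be controlled along the way.
\begin{itemize}
\item Action: the local midpoint replacements must keep the total action $\sum_j J\,W(\mu_{j},\mu_{j+1})^2$ within $D^2+\eps L^2$. This uses the admissibility bound of each midpoint section, $W(\mu_{j-1},\mu_j)^2+W(\mu_j,\mu_{j+1})^2\le \tfrac12 d_j^2+\eta L^2$, together with the triangle inequality.
\item Local distances: the local $d_j$ must be close to $D/J$ (up to a factor of $2$ for the triple), so that $\sigma_\kappa^{(1/2)}(d_j)$ approximates the right coefficient. This follows from near-equality in the chain $D\le\sum_j W(\mu_j,\mu_{j+1})\le\bigl(J\sum_j W(\mu_j,\mu_{j+1})^2\bigr)^{1/2}$, which forces every increment to be within $o(1)$ of $D/J$. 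Continuity of $\sigma_\kappa^{(t)}(\cdot)$ on $[0,L]$, uniformly since $KL^2<N\pi^2$, then converts these distance errors into entropy errors.
\end{itemize}

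The main obstacle is making the iteration converge, or at least terminate, while keeping the section single and admissible. Replacing $\mu_j$ changes the data for the triples at $j\pm1$. I would try a finite number of sweeps, justified by the observation that a $\SCDmid$ replacement cannot decrease the concave-defect functional by more than $\eps'$. Alternatively, I would use a compactness argument on the finite-dimensional space $\prod_j\Pp(V_{r+j})$ to choose a section that maximizes a suitable weighted sum of the $U_{r+j}(\mu_j)$ minus an action penalty. At such a maximizer each triple satisfies the midpoint inequality automatically, since otherwise the $\SCDmid$ midpoint would improve it. The compactness route looks cleaner, and the uniformity in endpoints then comes from the uniform lateness threshold in $\SCDmid$ and in Theorem~\ref{thm:admissible-existence}.
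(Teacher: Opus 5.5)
Your proposal is correct in outline, and your preferred compactness route differs genuinely from the paper in how it obtains all triple inequalities at once. Both arguments use the same ingredients:
\begin{itemize}
\item an initial admissible section from Theorem~\ref{thm:admissible-existence};
\item the bound $J\eps'L^2/2$ on the action increase under one midpoint replacement (Lemma~\ref{lem:midpoint-replacement});
\item the distance control $|W(\mu_{j-1},\mu_{j+1})-2D/J|\le L\sqrt{\bar\eta}$ (Lemma~\ref{lem:finite-section-distance}), which lets one freeze the coefficient at $s=S_\kappa(2D/J)$;
\item a discrete maximum principle for $h_j=s(h_{j-1}+h_{j+1})$.
\end{itemize}
The paper implements the maximum principle with the strict supersolution $w_j=\sin(\pi j/J)$, which satisfies $s(w_{j-1}+w_{j+1})\le q_Jw_j$. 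The requirement $q_J<1$ is exactly where $KL^2<N\pi^2$ enters, uniformly in $D\le L$. The paper then runs $B$ full sweeps. Lemma~\ref{lem:finite-entropy-contraction} shows that each sweep contracts $\max_j(h_j-u_j)_+/w_j$ by the factor $q_J$, up to an additive error. $B$ is fixed first, and the replacement tolerance is chosen afterwards so that the $B(J-1)$ action increments total at most $\bar\eta L^2/2$.

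Your justification for the sweep variant does not work: the fact that a replacement cannot lower the defect functional by more than $\eps'$ gives no bound on the number of sweeps. What bounds it is this geometric contraction per sweep.

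Your maximizer of $\sum_jc_jU_{r+j}(\mu_j)-\lambda\mathcal A_{r,J}(\boldsymbol\mu)$ avoids that issue, but the triple inequalities are not automatic. The $\SCDmid$ midpoint may raise the action by $J\eps'L^2/2$. Maximality therefore gives the triple inequality only with tolerance $\eps'(1+\lambda JL^2/(2c_j))$. Comparing with the initial section bounds the maximizer's action excess by $\eta L^2+\sum_jc_j/\lambda$. So you must fix $c_j>0$ first, then take $\lambda$ large, and only then choose $\eps'$ small.

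Once all triple inequalities hold simultaneously, the sine weights give $E\le q_JE+e/w_*$ for $E=\max_j(h_j-u_j)_+/w_j$. This yields the paper's final bound $e/((1-q_J)w_*)$ with no iteration.

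The accumulated error is not $O(J\eps')$. Already for $\kappa=0$, the recursion $d_j\le\frac12(d_{j-1}+d_{j+1})+e$ admits $d_j=e\,j(J-j)$, so local inequalities alone give amplification of order $J^2$. This is harmless because $J$ is fixed.

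For a bound uniform in the endpoints, you need a uniformly strict supersolution such as $w_j$. Positivity of $h_j$ gives the maximum principle only qualitatively, because $U_r(\mu_0)$ may be tiny for concentrated endpoints.

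As for what each route buys: the paper's route is constructive. It produces the section by an explicit finite number of midpoint replacements from any admissible starting section, with simple additive action bookkeeping. Yours is shorter and uses the maximum principle once, at a single configuration. It does so at the cost of being nonconstructive, relying on compactness of the simplices and continuity of $U_n$, and requiring a more delicate order of parameters.
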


This theorem requires no convergence assumption.
The main point is to obtain all interior entropy comparisons
on the same section while preserving its admissibility.
The proof uses successive midpoint replacements to control
the entropy comparison deficits and the accumulated action error;
see Theorem~\ref{thm:direct-midpoint}.

\paragraph{Limit characterization.}
Suppose now that the layers converge 
in their intrinsic bridge realization:
\[
 d_H^Z(V_n,M)\longrightarrow0,\qquad
 m_n\rightharpoonup m,\qquad \supp m=M,
\]
where $M\subset Z$ is nonempty and compact, and $d=d_Z|_{M\times M}$.
This is the measured Gromov--Hausdorff 
convergence of Definition~\ref{def:convergence},
which retains compatibility with the prescribed bridges.
Let $\Delta_{n}(K,N)$ be the single-layer midpoint defect from
\eqref{eq:single-defect}.
It is the least common tolerance for which every pair of
probability measures on $V_n$ admits one intermediate measure
on the same layer satisfying both the midpoint action bound
and the entropy comparison.

\begin{introtheorem}[Limit characterization]
\label{intro:limit}
Under the standing assumptions and the convergence above,
\begin{gather*}
 \SCD(K,N)
 \Longleftrightarrow
 \Delta_n(K,N)
 \rightarrow 0
 \Longleftrightarrow
 (M,d,m)\text{ satisfies }\CDe(K,N).
\end{gather*}
\end{introtheorem}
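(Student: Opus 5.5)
We prove the three conditions equivalent through the cycle $\SCD(K,N)\Rightarrow\CDe(K,N)\Rightarrow\Delta_n(K,N)\to0\Rightarrow\SCD(K,N)$. By Theorem~\ref{intro:midpoint} it is enough to work with $\SCDmid(K,N)$ throughout.

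\emph{Step 1: $\SCDmid(K,N)\Rightarrow\CDe(K,N)$ on the limit.} Fix $\nu_0,\nu_1\in\Pp(M)$. The entropy $\Ent(\cdot|m)$ is lower semicontinuous, and by the definition of $\CDe$ we may restrict to $\nu_0,\nu_1$ of finite entropy, and in fact to bounded densities after a standard truncation and approximation. Using $d_H^Z(V_n,M)\to0$ and $m_n\rightharpoonup m$, we construct recovery sequences $\mu_0^n\in\Pp(V_r)$ and $\mu_2^n\in\Pp(V_{r+2})$. We push $\nu_i$ through measurable nearest-point maps onto cells of $V_n$ weighted by $m_n$, so that $W(\mu_i^n,\nu_i)\to0$ and $H(\mu_i^n)\to\Ent(\nu_i|m)$. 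Matching the entropies in the limit is the delicate part: it needs a density-preserving discretization rather than a plain projection. Applying $\SCDmid$ with $\eps_n\to0$ gives midpoints $\mu_1^n$ on $V_{r+1}$. Their action bound forces $W(\mu_0^n,\mu_1^n)^2+W(\mu_1^n,\mu_2^n)^2\le \tfrac12 D^2+o(1)$. The supports lie in a compact neighbourhood of $M$, so Prokhorov gives a limit $\nu_{1/2}$, which is a $W$-midpoint of $\nu_0,\nu_1$ supported on $M$. Lower semicontinuity of $H$ under the joint convergence $(\mu^n,m_n)\to(\nu,m)$ gives $U(\nu_{1/2})\ge\limsup U_{r+1}(\mu_1^n)$, and the endpoint terms converge. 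We thus obtain the midpoint $\CDe$ inequality. Iterating over dyadic times and taking a compactness limit yields a geodesic satisfying $\CDe(K,N)$. The standard fact that midpoint $\CDe$ plus compactness implies $\CDe$ can be imported from \cite{EKS2015}, with continuity of $\sigma_\kappa^{(t)}$ ensured by $KL^2<N\pi^2$.

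\emph{Step 2: $\CDe(K,N)\Rightarrow\Delta_n\to0$.} This is the recovery direction and the main obstacle, because it must hold uniformly over all pairs on $V_n$, including Dirac masses. We argue by contradiction. Suppose there are pairs $(\mu_0^n,\mu_1^n)$ on $V_n$ whose defect stays above $\delta>0$. Pass to $W$-limits $\nu_0,\nu_1$ on $M$, take a $\CDe$ midpoint $\nu_{1/2}$, and discretize it back to $V_n$. The difficulty is that $H_n(\mu_i^n)$ need not converge to $\Ent(\nu_i)$; we only have the liminf inequality, and that inequality points the wrong way for the endpoints. We handle this by first replacing $\mu_i^n$ with their heat-type smoothings $\tilde\mu_i^n$ at a scale $s_n\to0$, averaging over $m_n$-cells of radius $s_n$. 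This keeps $W(\tilde\mu_i^n,\mu_i^n)\le s_n$ and does not increase $H_n$ beyond $o(1)$. Using the convexity of $U$ along mixtures, we then argue that $\CDe$ applied to the smoothed limits yields a discrete intermediate measure whose entropy beats the comparison up to $o(1)$, using the entropy upper-semicontinuity available for smoothed densities. The action error is $O(s_n)$, absorbed into the tolerance. Choosing $s_n$ compatible with $d_H^Z(V_n,M)$ gives the contradiction. This is the step where most of the work goes.

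\emph{Step 3: $\Delta_n\to0\Rightarrow\SCDmid$.} The single-layer midpoints live on $V_n$, whereas $\SCDmid$ requires the middle measure on $V_{r+1}$ with endpoints on $V_r$ and $V_{r+2}$. We transport endpoints between consecutive layers through the prescribed bridges. Completeness of the refinement system, as used in Theorem~\ref{thm:admissible-existence}, gives bridge maps with $W$-displacement $\to0$ and entropy change $o(1)$. We then apply the single-layer midpoint on $V_{r+1}$ and estimate the resulting action and entropy errors by $\Delta_{r+1}+o(1)\to0$.
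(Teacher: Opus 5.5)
Your Step~2, the implication $\CDe(K,N)\Rightarrow\Delta_n(K,N)\to0$, has a genuine gap, and it starts from a misdiagnosis.

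\emph{The endpoint inequality points the right way.} Joint lower semicontinuity gives $U_N(\nu_i\mid m)\ge\limsup_n U_n(\mu_i^n)$. This is exactly the direction you need on the endpoint side. A $\CDe$ midpoint $\bar\nu$ of $\nu_0,\nu_1$ satisfies $U_N(\bar\nu\mid m)\ge S_\kappa(D)[U_N(\nu_0\mid m)+U_N(\nu_1\mid m)]$, and this right-hand side dominates the discrete one up to $o(1)$. So the smoothing is aimed at a non-problem.

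\emph{The two real problems are left untouched.} First, you never construct a way to put $\bar\nu$ back on $V_n$ with $H_n\le\Ent_m(\bar\nu)+o(1)$ and small transport cost. As you note yourself in Step~1, a nearest-point pushforward does not control entropy relative to $m_n$, and Step~1 leaves this same construction open. Second, for endpoint families such as Dirac masses, the limits $\nu_i$ typically have infinite entropy, so $\CDe$ supplies no midpoint at all. Averaging at scales $s_n\to0$ does not change those limits. The appeal to ``convexity of $U$ along mixtures'' is also unfounded, since $U=e^{-H/N}$ is neither convex nor concave under mixing. As written, Step~2 does not establish the recovery direction.

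\emph{The missing idea.} Transfer measures through a coupling $\beta_n\in\Pi(m_n,m)$ with small displacement, using the conditional-averaging lift $E_n$ and projection $T_n$ it induces. Both maps contract entropy (Lemma~\ref{lem:transfer}). The paper takes $\beta_n$ with support radius $h_n=W_{\infty,Z}(m_n,m)\to0$; this needs Lemma~\ref{lem:upgrade}, which uses connectedness of $M$ and Hall's criterion. Then:
\begin{itemize}
\item every $p,q\in\Pp(V_n)$, Diracs included, lifts to a finite-entropy measure with $U_N(E_np\mid m)\ge U_n(p)$;
\item $\CDe$ applies to the lifts;
\item the projected midpoint satisfies $U_n(T_n\bar\nu)\ge U_N(\bar\nu\mid m)\ge S_\kappa(D_*)[U_n(p)+U_n(q)]$ with $|D_*-D|\le2h_n$.
\end{itemize}
This gives $\Delta_n\to0$ uniformly and with rates, with no contradiction argument. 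Your compactness route could also be completed with these tools: use lower semicontinuity in the correct direction, $T_n$ for the midpoint, and finite-entropy approximation of infinite-entropy limit endpoints. The same $T_n$ is the density-preserving discretization missing in Step~1: contraction gives $\limsup_nH_n(T_n\mu)\le\Ent_m(\mu)$, and lower semicontinuity gives the reverse.

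\emph{Step~3.} This step is a valid route that differs from the paper, which proves $\Delta_n\to0\Rightarrow\CDe\Rightarrow\SCD$. You must replace ``entropy change $o(1)$'', which is not available in general, by one-sided contraction under the reference kernels. Set $p=\mu_-P_r$ and $q=\mu_+Q_{r+1}$. Both lie on $V_{r+1}$, within $C\ell_r$ of $\mu_-$ and $\mu_+$ respectively. By Proposition~\ref{prop:carrier-entropy}, $U_{r+1}(p)\ge U_r(\mu_-)$ and $U_{r+1}(q)\ge U_{r+2}(\mu_+)$. Since $S_\kappa\ge0$, this is the needed direction. The single-layer minimizer on $V_{r+1}$ then yields $\SCDmid(K,N)$ with errors controlled by $\Delta_{r+1}$ and $\ell_r$, without any convergence assumption.
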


This equivalence connects comparisons across refinement layers,
vanishing midpoint defects within individual layers, and
entropy comparisons along Wasserstein geodesics on the limit.
In the reverse direction, the limit condition recovers the
discrete comparisons uniformly over endpoint measures,
which may vary with the refinement level.
For comparisons across layers, the segment length is fixed;
see Theorem~\ref{thm:characterization}.

If $N>1$ and the limit is essentially nonbranching, the equivalent
conditions also include the classical $\mathrm{CD}(K,N)$ condition.
This follows through the reduced condition $\mathrm{CD}^{*}(K,N)$
of Bacher--Sturm \cite{BacherSturm2010}, using
\cite[Theorem~3.12]{EKS2015} and
\cite[Main Theorem~1.1]{CavallettiMilman2021}; see
Corollary~\ref{cor:classical-cd}.

\paragraph{Recovery at shrinking transport scales.}
To retain the second-order curvature comparison 
at shrinking transport distances, 
we estimate the recovery errors relative
to the squared endpoint distance.
Assume in addition that the limit satisfies $\CDe(K,N)$.
Set $h_n:=W_{\infty,Z}(m_n,m)$.
The convergence assumptions of Theorem~\ref{intro:limit},
together with the refinement geometry, imply $h_n\to0$
(Lemma~\ref{lem:upgrade}).
Write $W_n=W_{2,(V_n,d_n)}$ and
$S_\kappa(D)=\sigma_\kappa^{(1/2)}(D)$.
The quantitative result controls both comparisons at the same
approximate midpoint.

\begin{introtheorem}[Single-layer midpoint recovery]
\label{intro:recovery}
For every $n\ge R$ and $p,q\in\Pp(V_n)$ with $D=W_n(p,q)>0$,
there is $\nu\in\Pp(V_n)$ such that, with
$a=W_n(p,\nu)$ and $b=W_n(\nu,q)$,
\begin{align*}
 0\le2a^2+2b^2-D^2&\le12Dh_n+36h_n^2,\\
 \frac{U_n(\nu)}{U_n(p)+U_n(q)}
 &\ge S_\kappa(D)-2C_{\kappa,L}h_n(D+h_n).
\end{align*}
The constant $C_{\kappa,L}$ is the derivative bound in
Theorem~\ref{thm:single-recovery} and is independent of the endpoints.
For $D=0$, taking $\nu=p=q$ gives exact comparisons.
\end{introtheorem}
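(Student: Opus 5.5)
Our plan is to transfer the endpoints to the limit, interpolate there, and project the result back to the layer. Let $h=h_n=W_{\infty,Z}(m_n,m)$, and fix an optimal $W_\infty$-coupling $\gamma_n$ of $m_n$ and $m$. Disintegrating $\gamma_n$ gives a Markov kernel $x\mapsto P_x\in\Pp(M)$, supported in $B_Z(x,h)$, with $m_n P = m$, and a reverse kernel $y\mapsto Q_y\in\Pp(V_n)$, supported in $B_Z(y,h)$, with $mQ=m_n$.

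\textbf{Transfer and interpolation.} Set $\tilde p=pP$ and $\tilde q=qP$ in $\Pp(M)$. Because every point moves by at most $h$, we get $W_\infty(p,\tilde p)\le h$ and $W_\infty(q,\tilde q)\le h$. Two further facts are needed.
\begin{itemize}
\item Relative entropy does not increase under a Markov kernel carrying $m_n$ to $m$ (data processing). Hence $\Ent_m(\tilde p)\le H_n(p)$, that is, $U(\tilde p)\ge U_n(p)$ with $U=e^{-\Ent_m/N}$. The same holds for $q$.
\item The metric $d_n$ agrees with $d_Z$ on $V_n$ in the bridge realization. This should be part of the definition of the intrinsic realization, or at least hold up to the layer identification, and I take it as given.
\end{itemize}
Since $(M,d,m)$ satisfies $\CDe(K,N)$, there is a $W$-geodesic with midpoint $\tilde\nu$ such that
\[
U(\tilde\nu)\ge S_\kappa(\tilde D)\bigl(U(\tilde p)+U(\tilde q)\bigr),\qquad \tilde D=W(\tilde p,\tilde q),\qquad |\tilde D-D|\le 2h.
\]
If $U(\tilde p)$ or $U(\tilde q)$ vanishes, the inequality is trivial and the argument goes through unchanged. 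Now define $\nu=\tilde\nu Q\in\Pp(V_n)$. Data processing in the reverse direction, using $mQ=m_n$, gives $U_n(\nu)\ge U(\tilde\nu)$, and we also have $W_\infty(\nu,\tilde\nu)\le h$.

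\textbf{Action estimate.} By the triangle inequality,
\[
a\le \tfrac{\tilde D}{2}+2h\le \tfrac D2+3h,
\]
and likewise for $b$. Then
\[
2a^2+2b^2\le 4\Bigl(\tfrac D2+3h\Bigr)^2=D^2+12Dh+36h^2,
\]
which is exactly the stated constant. The lower bound $2a^2+2b^2\ge (a+b)^2\ge D^2$ follows from the triangle inequality.

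\textbf{Entropy estimate.} Chaining the inequalities gives
\[
U_n(\nu)\ge S_\kappa(\tilde D)\bigl(U_n(p)+U_n(q)\bigr).
\]
By the mean value theorem on $[0,L+2h]$, where $|S_\kappa'|\le C_{\kappa,L}$ (this uses $KL^2<N\pi^2$, so $S_\kappa$ is smooth and positive there), we get
\[
S_\kappa(\tilde D)\ge S_\kappa(D)-2C_{\kappa,L}h.
\]
This is weaker than the claimed error $2C_{\kappa,L}h(D+h)$ when $D$ is small.

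\textbf{Main obstacle: second-order smallness.} The claim requires an error of order $h(D+h)$, which relies on $S_\kappa$ being even. I would write $S_\kappa(D)=s(D^2)$ with $s$ smooth and use $|s'|$ as the derivative bound; presumably this is how $C_{\kappa,L}$ is defined in Theorem~\ref{thm:single-recovery}. Then
\[
|S_\kappa(\tilde D)-S_\kappa(D)|\le C\,|\tilde D^2-D^2|\le C\,|\tilde D-D|(\tilde D+D)\le C\cdot 2h(2D+2h).
\]
The factor of $2$ is absorbed either by normalizing $C_{\kappa,L}$ appropriately or by the sharper estimate
\[
|\tilde D^2-D^2|\le 2h(2D+2h)\le 4h(D+h).
\]
Dividing by $U_n(p)+U_n(q)$, which is positive since $U_n>0$ on a finite space, gives the stated ratio bound. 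The case $D=0$ is immediate.
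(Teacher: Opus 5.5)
Your proposal is correct and follows the paper's proof of Theorem~\ref{thm:single-recovery} essentially step for step. You lift $p,q$ through a coupling of $m_n$ and $m$ of radius $h_n$, take a $\CDe(K,N)$ midpoint $\tilde\nu$ of the lifts, and project it back, using entropy contraction in both directions. Note that the lifts have finite entropy, so the case $U(\tilde p)=0$ you mention never arises. Note also that $U(\tilde\nu)>0$ is what gives $\tilde\nu\ll m$, so that $\tilde\nu Q$ is well defined. You then get $a,b\le D/2+3h_n$ and estimate the coefficient at second order through $|\tilde D^2-D^2|\le 4h_n(D+h_n)$.

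The factor of $2$ you worry about never appears. The paper defines $C_{\kappa,L}$ by $|S_\kappa'(s)|\le C_{\kappa,L}s$ on $[0,L]$, so your $s(u)=S_\kappa(\sqrt u)$ satisfies $|s'|\le C_{\kappa,L}/2$ on $[0,L^2]$ and yields exactly $2C_{\kappa,L}h_n(D+h_n)$. Your ``sharper estimate'' is an identity, not an improvement.

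The derivative bound should be used on $[0,L]$, not $[0,L+2h]$. This is legitimate because $D,\tilde D\le\diam Z_{\ge R}\le L$. It is also necessary: when $K>0$, the blow-up point $\pi/\sqrt\kappa$ of $S_\kappa$ may lie below $L+2h$.
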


The factor $h_n(D+h_n)$ comes from $S_\kappa'(0)=0$ and retains
the second-order curvature term. The action excess and normalized
entropy comparison deficit, each divided by $D^2$, tend uniformly
to zero for $D\ge r_n$ whenever $h_n=o(r_n)$.
In particular, the expansion
$S_\kappa(D)=1/2+KD^2/(16N)+O_\kappa(D^4)$ gives
\[
 \frac{U_n(\nu)/(U_n(p)+U_n(q))-1/2}{D^2}
 \ge \frac{K}{16N}-o(1)
\]
uniformly for $0<r_n\le D\le R_n\to0$ with $h_n/r_n\to0$.
This is a lower comparison at the fixed dimension $N$.
It includes Dirac endpoints and requires no uniform entropy bound.
On uniform interval grids, nearest-neighbor Dirac pairs satisfy
$D=2h_n$ and have a minimum relative action defect of one
(Corollary~\ref{prop:nearest-scale}).
Thus relative action errors need not vanish at transport
distances comparable to $h_n$.

Bounds on $h_n$ in terms of geometric approximation and reference
mass discrepancy give explicit conditions for these comparison scales.
We also study parameter monotonicity, invariance, and finite comparison
defects on segments. Within the comparison range, the measured limit
determines the admissible parameter region, while the finite defects
and their rates retain information about the chosen refinement sequence.

The examples make this dependence concrete. Weighted adaptive
intervals have an explicit curvature range, and two refinements of
the same initial hypercube graph show the effect of the layer metrics.
Further examples distinguish complete resolution, endpoint control,
and entropy comparison; the specified Goldberg--Coxeter sequence
exhibits a midpoint obstruction directly on its layers.

Sections~\ref{sec:refinement}--\ref{sec:direct-midpoint} develop
the framework and prove the midpoint characterization.
Sections~\ref{sec:limits}--\ref{sec:characterization} establish
convergence criteria and the limit characterization.
Sections~\ref{sec:quantitative-recovery}--\ref{sec:invariance}
give the quantitative estimates and invariance results, and
Section~\ref{sec:examples} contains the examples.

\clearpage
\section{Discrete spaces and local refinement}
\label{sec:refinement}

We begin with finite discrete spaces 
and the local rules that generate successive refinement layers. 
We then define refinement systems and 
show how bridges between adjacent layers determine a common metric realization.

\subsection{Discrete spaces and generating rules}

\begin{definition}[Finite discrete space]
\label{def:discrete-space}
A finite discrete space is a triple $X=(V,d,m)$, 
where $V$ is a nonempty finite set, 
$d$ is a metric on $V$, 
and $m$ is a probability measure 
satisfying $m(x)>0$ for every $x\in V$.
\end{definition}

An element of $V$ may represent a vertex, a cell, or another discrete object. 
Thus the definition accommodates finite graphs, point configurations, 
and cell systems, without prescribing a dimension or a product structure. 
For a region with fixed exterior data, 
we require mass transfer to remain within the region 
and preserve its total reference mass.

To describe local refinement, 
we equip $X$ with the finite geometric data required by the chosen rule. 
These may include 
incidence relations, adjacency relations, local scales, cell boundaries, interface marks, 
or prescribed boundary values. 
Interfaces may be recorded among these data 
without being included in $V$ or assigned reference mass. 
For example, when $V$ consists of the top-dimensional cells of a cell complex, 
reference mass is assigned to these cells, 
while their shared boundaries enter only as geometric data.
We write $\mathcal X$ for $X$ together with these data.

\begin{definition}[Stationary local refinement rule]
\label{def:local-rule}
A stationary local refinement rule $\ruleop$ 
maps an admissible space $\mathcal X_n$, equipped with its geometric data, 
to $\mathcal X_{n+1}$ by a procedure satisfying the following requirements.
\begin{enumerate}
\item Each new local piece depends on a neighborhood of uniformly bounded radius 
         in the adjacency structure of $\mathcal X_n$, 
         together with the prescribed boundary or collar data. 
         Any required bound on the number of elements in a geometric stencil 
         is included in the admissible class.
\item The same procedure is used at each refinement step, 
         with the current scales and marks as inputs.
\item The rule specifies the new adjacency relations and
         the lengths of the corresponding local connections,
         and determines the new metric by shortest paths.
         The rule also specifies compatible shared interfaces,
         the new reference measure, and the comparison data introduced below.
\item The rule commutes with isomorphisms of metric probability spaces
         that preserve the specified geometric data. 
         When the construction admits several outcomes, 
         a choice of outcome or branch is part of the refinement system.
\end{enumerate}
\end{definition}

We call each $X_n$ a \emph{refinement layer}, or simply a \emph{layer};
$\mathcal X_n$ denotes the same layer equipped with its geometric data. 
Each layer thus carries the metric naturally determined 
by its local connections and their lengths.
Since refinement may change the adjacency relations,
distances are computed using the connections of the new layer.
We adopt the same convention for the initial layer.

Stationarity means that the procedure is independent of $n$, 
although the data to which it applies may vary. 
This agrees with the use of stationarity for subdivision schemes; 
see Dyn and Levin \cite{DynLevin2002}.

The locality bound is uniform over the layers; 
finiteness of each layer alone gives no such bound. 
In a geometric model, 
locality in the adjacency structure does not imply 
that the corresponding geometric neighborhoods shrink: 
their diameters must also tend to zero for this stronger property to hold. 
Stationarity allows the scales and displacements to vary. 
Shared interfaces may be constructed simultaneously from the preceding layer 
when the rule ensures compatibility. 
We do not require the result to be independent 
of the order of successive local transformations.

An adaptive choice of regions may leave some patches unchanged 
at a given refinement step. 
The same local procedure is used throughout,
with this choice recorded among the data when needed. 
To call the process a refinement, 
we also require the resolution improvement defined below. 
A relation between consecutive layers alone could equally describe mixing or coarsening.

\subsection{Interlayer mass transport and bridges}

\begin{definition}[Comparison data]
\label{def:bridge}
Let $X_n$ and $X_{n+1}$ be consecutive layers. 
A bridge $b_n$ is a pseudometric on the disjoint union $V_n\sqcup V_{n+1}$ 
whose restrictions are $d_n$ and $d_{n+1}$. 
A reference coupling is a nonnegative array
$\gamma_n\in\Pi(m_n,m_{n+1})$, 
that is,
\begin{equation}
 \sum_y\gamma_n(x,y)=m_n(x),\qquad
 \sum_x\gamma_n(x,y)=m_{n+1}(y).
 \label{eq:reference-marginals}
\end{equation}
A comparison system $\Sigma$ records the layers with 
their geometric data, the bridges, and the reference couplings. 
A refinement system also specifies how these data are generated; 
its full definition is given below. 
\end{definition}

Elements from different layers may have zero bridge distance.
The relation $\gamma_n(x,y)>0$ records mass transfer 
from $x\in V_n$ to $y\in V_{n+1}$. 
A source element may supply several target elements,
and a target element may receive mass from several source elements.
Geometric dependence is recorded separately. 
A neighboring cell, for instance, 
may constrain the position or interface of a new element without supplying its mass.

The reference coupling gives forward and backward stochastic kernels
\begin{equation}
 P_n(x,y)=\frac{\gamma_n(x,y)}{m_n(x)},\qquad
 Q_n(y,x)=\frac{\gamma_n(x,y)}{m_{n+1}(y)}.
 \label{eq:reference-kernels}
\end{equation}
Thus 
$m_{n}P_{n}(y) = m_{n+1}(y)$ and $m_{n+1}Q_{n}(x) = m_n(x)$. 
These kernels describe the prescribed transfer of reference mass. 
The curvature tests will allow all couplings permitted by the bridge metric, 
without restricting them to the support of the reference coupling.

For fixed consecutive layers and a cost $c_n(x,y)$,
the reference coupling is optimal if
\begin{equation}
 \gamma_n\in
 \operatorname*{argmin}_{\gamma\in\Pi(m_n,m_{n+1})}
 \sum_{x,y}c_n(x,y)\gamma(x,y).
 \label{eq:reference-optimality}
\end{equation}
The coupling polytope is nonempty and compact, 
so continuity of the objective gives a minimizer. 
For the bridge geometry, a natural choice is
$c_n=b_n^2$. If the generating rule restricts the feasible set,
optimality is understood relative to that set.

\subsection{The refinement system}

\begin{definition}[Geometric resolution data]
\label{def:resolution}
Geometric resolution data consist of:
\begin{enumerate}
\item a local geometric scale $s_n(x) \ge 0$ for each $x\in V_n$,
determined by the geometric data of $\mathcal X_n$;
\item a nonempty dependency set $G_n(y)\subset V_n$
for each $y\in V_{n+1}$, consisting of the elements used
by the local rule to construct the geometry of $y$.
\end{enumerate}
\end{definition}

The local geometric scale measures both the size of the element
and its spacing from adjacent elements.
To be more precise, 
let $\mathcal N_n(x)$ denote the elements adjacent to $x$
under the adjacency relation specified by the refinement rule,
let $r_n(x)\ge0$ denote the geometric size of the element
represented by $x$, computed from its local geometry
using the same prescription at every layer.
For instance, a point element may have size of zero and 
a cell element have the size of its diameter
or radius about a designated representative.
We define
\begin{equation}
 s_n(x) :=
 \max\left\{
  r_n(x),\
  \max_{z\in\mathcal N_n(x)}d_n(x,z)
 \right\},
 \label{eq:local-resolution-scale}
\end{equation}
where the maximum over an empty neighborhood is taken to be zero.
On the other hand, 
geometric dependence is distinct from mass transfer.
An element $x\in G_n(y)$ may influence the position of $y$
without contributing mass to it.
Mass transfer is recorded separately by $\gamma_n(x,y)>0$.

\begin{definition}[Local refinement and complete resolution]
\label{def:complete-refinement}
A local transformation is a refinement step if each new element
$y\in V_{n+1}$ is either an unchanged copy of an element $x\in V_n$,
with $b_n(x,y)=0$ and $s_{n+1}(y)=s_n(x)$, or a replacement whose
associated patch has finer geometric scale in the sense that
\begin{equation}
 s_{n+1}(y) \le \max_{x\in G_n(y)}s_n(x).
 \label{eq:local-resolution}
\end{equation}
A sequence of refinement steps is a complete refinement of the
selected finite region if
\begin{equation}
 \ell_n := \max_{x\in V_n}s_n(x) \longrightarrow 0.
 \label{eq:complete-resolution}
\end{equation}
\end{definition}

The local refinement condition makes $\ell_n$ nonincreasing.
Equality may occur when some local geometry is refined
without reducing the maximal scale.
Completeness requires both the sizes of the elements
and the distances between adjacent elements
to tend uniformly to zero.

The maximum over $G_n(y)$ allows the new geometry to depend on several
elements of the preceding layer, without requiring containment in any
one of their patches. An additional condition $s_{n+1}(y)<s_n(x)$ for
every $x$ with $\gamma_n(x,y)>0$ will be imposed only when a result
needs it. Neither the sets $V_n$ nor all successive patches need be
nested. Their cardinalities may increase, but do not by themselves
measure resolution.

\begin{definition}[Refinement system]
\label{def:refinement-system}
A refinement system consists of an admissible class of finite discrete
spaces equipped with geometric data, a stationary local rule
$\ruleop$, an initial layer $\mathcal X_0$, and successive layers
$\mathcal X_{n+1}=\ruleop(\mathcal X_n)$. 
It records the local geometric scale
$s_n$ on each layer and, for each refinement step, a bridge $b_n$,
a reference coupling $\gamma_n$, and geometric dependency set $G_n$,
subject to the following compatibility requirements.
\begin{enumerate}
\item The triple $X_n=(V_n,d_n,m_n)$ is the metric probability space
underlying $\mathcal X_n$. Its scales, adjacency relations, and
interface data are those specified in $\mathcal X_n$.
\item The bridge and reference coupling are specified by the
generating rule and satisfy Definition~\ref{def:bridge}.
\item Each $G_n(y)$ lies in the uniformly local neighborhood used to
generate $y$, and every step is a local refinement in the sense of
Definition~\ref{def:complete-refinement}.
\end{enumerate}
\end{definition}

We write $\Sigma$ for the system and, 
when only comparison data enter a statement, 
for its underlying comparison system. 
The generating rule may determine the new geometry 
and reference masses through a joint local optimization. 
Its admissible candidates, 
boundary and mass constraints, 
and objective must first be specified. 
Optimal transport can then be used to compare the candidates. 
The resulting construction must satisfy the resolution condition in
Definition~\ref{def:complete-refinement};
nondegeneration must also be checked.

\begin{example}
\emph{Barycentric subdivision.}
Let $V_n$ consist of the top-dimensional simplices of a finite pure
$k$-dimensional simplicial complex embedded in Euclidean space,
where $k\ge1$. 
Use barycentre distances of adjacent simplices for $d_n$, normalized
$k$-dimensional volumes for $m_n$ 
and define $s_{n}$ by \eqref{eq:local-resolution-scale}.
Barycentric subdivision divides each simplex into smaller simplices;
their geometric dependency sets consist of the containing simplex in the
preceding layer. 
Its mass is distributed according to their volumes and 
bridges are chosen to be compatible with the layer metrics.
The local geometric scale bound
$\ell_{n+1}\le \frac{k}{k+1}\ell_n$ gives complete refinement;
see Hatcher \cite{Hatcher2002}.
\end{example}

\begin{example}
\emph{Goldberg--Coxeter subdivision.}
For finite trivalent discrete surfaces, the $\mathrm{GC}_{2,0}$ construction
combines combinatorial subdivision with a local Dirichlet problem
and a subsequent displacement of the old vertices; see Kotani,
Naito and Tao \cite{KotaniNaitoTao2022}. Several elements of the preceding layer
determine the new geometry, and the realized vertex sets need not
be nested. This illustrates the geometric flexibility allowed here.
To obtain a refinement system, one must also specify the reference
measures and comparison data and verify the resolution conditions.
\end{example}
\subsection{The intrinsic bridge realization}

The individual triples $X_n$ do not determine distances between layers.
The bridges supply these distances for consecutive layers. A finite
segment of the refinement sequence consists of consecutive layers
$X_r,\ldots,X_{r+J}$, together with their associated data. It comprises
$J$ refinement steps and $J+1$ layers. We now show that the bridges
determine compatible metrics on these segments, and hence a common
metric realization of the whole sequence.

\begin{lemma}[Isometric gluing and shortest chains]
\label{lem:gluing}
The bridges admit a common metric realization, after identifying
zero-distance points, in which every layer and every bridge embeds
isometrically. Let $Z$ be its completion. For integers $r,J\ge0$,
distances between points belonging to the layers $X_r,\ldots,X_{r+J}$
agree with the metric $D_{[r,r+J]}$ obtained by gluing only the bridges
$b_r,\ldots,b_{r+J-1}$ and identifying zero-distance points.
For $J=0$, set $D_{[r,r]}=d_r$. For $J\ge1$, $x_0\in V_r$, and
$x_J\in V_{r+J}$,
\begin{equation}
 d_Z(x_0,x_J)=
 \min_{x_j\in V_{r+j},\ 1\le j<J}
 \sum_{j=0}^{J-1}b_{r+j}(x_j,x_{j+1}).
 \label{eq:monotone-chain}
\end{equation}
\end{lemma}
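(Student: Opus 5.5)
We construct the realization as a quotient of a length-type gluing. On the disjoint union $\bigsqcup_{n\ge0}V_n$ define a weight $w(u,v)=b_n(u,v)$ whenever $u,v\in V_n\sqcup V_{n+1}$ for some $n$; for $u,v\in V_n$ this is $d_n(u,v)$, which is consistent between the two bridges $b_{n-1}$ and $b_n$ containing $V_n$. Let $\rho(u,v)$ be the infimum of $\sum_i w(z_i,z_{i+1})$ over finite chains from $u$ to $v$ in which consecutive points lie in a common $V_n\sqcup V_{n+1}$. Then $\rho$ is a pseudometric with $\rho\le w$ on each bridge domain. Identifying points at zero $\rho$-distance gives a metric space; $Z$ is its completion. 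The same construction restricted to $V_r\sqcup\cdots\sqcup V_{r+J}$, using only $b_r,\ldots,b_{r+J-1}$, defines $D_{[r,r+J]}$.

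The key step is to show that chains can be simplified to \emph{monotone} ones. First, any chain can be rewritten so that consecutive points lie in adjacent or equal layers, with each step weighted by the appropriate $b_n$. Two consecutive steps inside the same layer merge by the triangle inequality for $d_n$. A step $V_n\to V_n$ also merges with a neighboring step $V_n\to V_{n\pm1}$ by the triangle inequality for the pseudometric $b_n$ or $b_{n-1}$. An excursion $V_n\to V_{n+1}\to V_n$ (or $V_n\to V_{n-1}\to V_n$) has total cost at least the $d_n$-distance of its endpoints, again by the triangle inequality in the bridge pseudometric restricted to $V_n$. Replacing each excursion this way never increases cost. Inducting on the number of layer reversals in the chain, every chain between $u\in V_r$ and $v\in V_{r+J}$ is dominated by a strictly monotone chain $x_0=u,x_1\in V_{r+1},\dots,x_J=v$ with cost $\sum_j b_{r+j}(x_j,x_{j+1})$. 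For $J=0$ the reduced chain is the single step $d_r(u,v)$. Two consequences follow:
\begin{itemize}
\item $\rho|_{V_n}=d_n$ and $\rho|_{V_n\sqcup V_{n+1}}=b_n$, so every layer and every bridge embeds isometrically. For the bridge, the upper bound is $\rho\le w$; the lower bound holds because a monotone chain between adjacent layers is a single step, and same-layer pairs reduce to $d_n$.
\item Formula \eqref{eq:monotone-chain} holds, with the infimum being a minimum because the layers are finite.
\end{itemize}

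For the segment statement, the reduction never leaves the layer range $[\min,\max]$ of the endpoints: every excursion outside the range is removed. So the minimal chains for $\rho$ between points of $X_r,\dots,X_{r+J}$ use only $b_r,\ldots,b_{r+J-1}$, and hence $\rho=D_{[r,r+J]}$ there. Zero-distance identifications agree in the two constructions because they are determined by the same distance values. Passing to the completion preserves all of these isometries, since $V_n$ sits in $Z$ as a subset of the dense image.

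The main obstacle is making the excursion-removal induction rigorous for chains that move up and down several layers. I would handle it by choosing a point of the chain in an extremal layer, say the highest layer $V_k$ whose index exceeds both endpoint layers or lies above a local peak. Its neighbors in the chain then lie in $V_{k-1}$ or $V_k$. The excursion through it is replaced by a single $V_{k-1}$ or $V_k$ step, using only the triangle inequality of $b_{k-1}$. This strictly decreases the chain length or the peak height, so the induction terminates.
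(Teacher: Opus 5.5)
Your proof is correct, but it is organized differently from the paper's.

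\textbf{The paper's route.} It first proves a two-piece amalgamation lemma. Finite pseudometric spaces $A,B$ are glued along a common isometric subset $S$ by $\min_{z\in S}\{d_A(x,z)+d_B(z,y)\}$. This preserves the distances inside each piece, because an excursion into $B$ between $z,w\in S$ costs at least $d_B(z,w)=d_A(z,w)$. The paper then attaches the bridges one at a time. It notes that adding a bridge at either end of a segment leaves existing distances unchanged, which makes the segment metrics $D_{[r,r+J]}$ consistent and defines the realization as their union. Only then does it derive \eqref{eq:monotone-chain}. A chain from $V_r$ to $V_{r+J}$ must visit every intermediate layer. Between the first visits to $V_{r+j}$ and $V_{r+j+1}$, its length is at least $d_Z(x_j,x_{j+1})=b_{r+j}(x_j,x_{j+1})$, because the bridge is already known to embed isometrically.

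\textbf{Your route.} You define a single global chain pseudometric. You then prove one combinatorial reduction: every chain is dominated by a strictly monotone chain. From this, the isometric embeddings, the chain formula, and the agreement with $D_{[r,r+J]}$ all follow at once.

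\textbf{What each buys.} Your approach is self-contained and identifies the minimizing chains explicitly. The paper's approach is modular, and it needs no induction on turning points: the first-visit decomposition together with the triangle inequality in $Z$ does the work of your monotonization.

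\textbf{Tightening the induction.} When you write it out, note that removing points in the highest layer only confines the chain to the endpoint range; it does not make the chain monotone. The clean formulation is as follows. After merging runs within a single layer, the layer indices form a walk with steps $\pm1$. Any interior turning point can be shortcut, shortening the chain:
\begin{itemize}
\item a peak in $V_k$ with both neighbours in $V_{k-1}$, using the triangle inequality for $b_{k-1}$;
\item a valley in $V_k$ with both neighbours in $V_{k+1}$, using the triangle inequality for $b_k$.
\end{itemize}
A walk from $r$ to $r+J$ with no interior turning points is strictly increasing. Valleys must therefore be handled as well as peaks.
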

\begin{proof}
First glue two finite pseudometric spaces $A$ and $B$ along a common
nonempty isometric subset $S$. For $x\in A$ and $y\in B$, set
\[
 d(x,y)=\min_{z\in S}\{d_A(x,z)+d_B(z,y)\},
\]
while retaining the distances within each piece. This is the infimum
of the lengths of chains in the two pieces. To see that the original
distances are preserved, suppose a chain leaves $A$ at $z\in S$ and
returns at $w\in S$. The portion in $B$ has length at least
$d_B(z,w)=d_A(z,w)$ and can be replaced by a step within $A$ of no
greater length. The same argument applies to $B$. Concatenation of
chains gives the triangle inequality. Taking the quotient by zero
distances gives the metric gluing.

Apply this construction successively to $b_r,\ldots,b_{r+J-1}$ for
$J\ge1$. Every layer and bridge remains isometric by induction.
Adding a bridge at either end leaves the existing distances unchanged:
any portion of a chain outside the segment returns through the same
endpoint layer and can be replaced by a step within that layer.
Thus the pseudometrics obtained from finite segments agree on their
common layers. They define a pseudometric on the disjoint union of
all layers; its quotient by zero distances has completion $Z$.

To prove the chain formula, take a chain from $x_0\in V_r$ to
$x_J\in V_{r+J}$. Since each bridge joins consecutive layers, the
chain visits every intermediate layer. Denote its successive first
visits to these layers by $x_1,\ldots,x_{J-1}$. The portion between
$x_j$ and $x_{j+1}$ has length at least
$b_{r+j}(x_j,x_{j+1})$, because that bridge is isometric in the
realization. Summing proves the lower bound in
\eqref{eq:monotone-chain}. Conversely, any choice of the intermediate
$x_j$ gives a chain of the displayed length. This proves the upper
bound. Finiteness makes the infimum a minimum, and completion preserves
the resulting distances.
\end{proof}

We call $d_Z$ the \emph{intrinsic bridge metric}: 
``intrinsic'' refers to its determination by the prescribed bridges. 
The space $Z$ need not be a length space, 
and the shortest chains in \eqref{eq:monotone-chain} are finite lists of points. 
They need not follow the support of $\gamma_n$. 
The intrinsic bridge metric may differ from the distance 
induced by a common ambient space 
in which the entire refinement sequence is realized.

The local geometric scales describe the fineness of each layer.
We also use them to control the displacement between consecutive layers.
For the entropy comparisons below, we consider complete refinements
and require that there exist a constant $C$ such that,
for all sufficiently large $n$,
\begin{equation}
 \gamma_n(x,y)>0
 \quad\Longrightarrow\quad
 b_n(x,y)\le C \ell_n,
 \qquad x\in V_n,\quad y\in V_{n+1}.
 \label{eq:reference-displacement}
\end{equation}

Since the reference measures have full support,
the marginal identities \eqref{eq:reference-marginals}
ensure that every element of either layer is paired,
with positive reference mass, with an element of the other layer.
The bridges embed isometrically in $Z$, so
\eqref{eq:reference-displacement} gives
\begin{equation}
 d_{\mathrm H}^{Z}(V_n,V_{n+1}) \le C \ell_n \longrightarrow 0.
 \label{eq:adjacent-layer-distance}
\end{equation}
Thus consecutive layers become arbitrarily close
in the intrinsic bridge realization.

\begin{remark}
Condition \eqref{eq:reference-displacement} requires elements
exchanging reference mass between consecutive layers
to remain within a distance controlled by the geometric scale
$\ell_n$.
This is a natural requirement for local refinement:
mass is redistributed at the scale of the geometry being refined.
In what follows, when no confusion can arise,
we shall refer to a complete refinement satisfying
\eqref{eq:reference-displacement}
simply as a complete refinement.
\end{remark}

\section{Entropy and the SCD condition}
\label{sec:entropy}\label{sec:scd}

We first define relative entropy on the refinement layers
and establish its contraction under reference transport.
A complete refinement  provides 
probability sections that approximate constant-speed transport
through successive layers. 
We call them the admissible probability sections 
and then prove their existence on sufficiently late segments of each fixed length.
At the end of this section, 
we define the SCD condition as an entropy comparison along such sections.

\subsection{Relative entropy and reference transport}

Fix $1 \le N <\infty$. For a reference probability measure $m$, define
the relative entropy and its exponential transform by
\begin{equation}
 \Ent_m(\mu)=\begin{cases}
 \displaystyle\int\rho\log\rho\,\dd m,&\mu=\rho m,\\
 +\infty,&\mu\not\ll m,
 \end{cases}
 \qquad U_N(\mu\mid m)=e^{-\Ent_m(\mu)/N}.
 \label{eq:entropy-power}
\end{equation}
We use the conventions $0\log0=0$ and $e^{-\infty}=0$.
Jensen's inequality gives $\Ent_m(\mu)\ge0$ and
$0\le U_N(\mu\mid m)\le1$. 
On the finite layer $X_n$, 
we write
\begin{equation}
 H_n(p)=\sum_{x\in V_n}p(x)\log\frac{p(x)}{m_n(x)},
 \qquad U_n(p)=e^{-H_n(p)/N}>0.
 \label{eq:finite-entropy}
\end{equation}
Here $p\in\Pp(V_n)$, 
and $U_n$ always refers to the fixed parameter $N$. 
The strict positivity follows from the full support of $m_n$.

The reference kernels of Section~\ref{sec:refinement} 
transport densities by conditional averaging. 
We first consider the corresponding entropy contraction. 
Let $X=(V,d,m)$ and $X'=(V',d',m')$ 
be two adjacent refinement layers, 
and let $\gamma\in\Pi(m,m')$.
Set
\[
 P(x,y)=\frac{\gamma(x,y)}{m(x)},\qquad
 p=\rho m,\qquad p'=pP=\rho'm',\qquad
 \rho'(y)=\frac{\sum_x\gamma(x,y)\rho(x)}{m'(y)}.
\]
On the support $A=\{(x,y):\gamma(x,y)>0\}$, define the probability
measures
\[
 \widehat m(x,y)=\gamma(x,y),\qquad
 \widehat p(x,y)=\rho(x)\gamma(x,y).
\]
Thus $\widehat p$ has marginals $p$ and $p'$.

\begin{proposition}[Entropy contraction under reference transport]
\label{prop:carrier-entropy}\label{lem:markov-entropy}
With the notation above,
\begin{equation}
 \Ent_{\widehat m}(\widehat p)=\Ent_m(p),\qquad
 \Ent_m(p)-\Ent_{m'}(p')=
 \sum_{x,y}\gamma(x,y)\rho(x)\log\frac{\rho(x)}{\rho'(y)}\ge0.
 \label{eq:carrier-entropy}
\end{equation}
Terms with $\gamma(x,y)\rho(x)=0$ are understood to be zero.
Equality in the entropy inequality holds if and only if, for each
$y\in V'$, the density $\rho$ is constant on
$\{x\in V:\gamma(x,y)>0\}$.

More generally, let $m,m'$ be reference probability measures on
standard Borel spaces, and let $P$ be a Markov kernel with $mP=m'$.
Then, for every probability measure $\mu\ll m$,
\begin{equation}
 \Ent_{m'}(\mu P)\le\Ent_m(\mu).
 \label{eq:kernel-entropy-contraction}
\end{equation}
\end{proposition}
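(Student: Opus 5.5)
The proof is a direct computation on the finite support $A$, followed by the chain rule for relative entropy in the general case.

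\emph{Step 1: the lifted entropy.} On $A$ the density of $\widehat p$ with respect to $\widehat m$ is $\rho(x)$. Hence
\[
\Ent_{\widehat m}(\widehat p)=\sum_{x,y}\gamma(x,y)\rho(x)\log\rho(x).
\]
Summing over $y$ and using $\sum_y\gamma(x,y)=m(x)$ gives $\Ent_m(p)$.

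\emph{Step 2: the entropy difference.} Summing first over $x$ and using $\sum_x\gamma(x,y)\rho(x)=\rho'(y)m'(y)$, we get
\[
\Ent_{m'}(p')=\sum_{x,y}\gamma(x,y)\rho(x)\log\rho'(y).
\]
Subtracting this from Step 1 yields the displayed identity \eqref{eq:carrier-entropy}. Whenever $\gamma(x,y)\rho(x)>0$ we have $\rho'(y)>0$, so every logarithm in the sum is finite.

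\emph{Step 3: nonnegativity and the equality case.} Fix $y$ with $\rho'(y)>0$, and let $w_y(x)=\gamma(x,y)/m'(y)$. This is a probability on $\{x:\gamma(x,y)>0\}$. With $\phi(t)=t\log t$ and the identity $\rho'(y)=\sum_x w_y(x)\rho(x)$, the $y$-th term of the sum equals
\[
m'(y)\Bigl[\sum_x w_y(x)\phi(\rho(x))-\phi\Bigl(\sum_x w_y(x)\rho(x)\Bigr)\Bigr].
\]
Jensen's inequality for $\phi$ shows each bracket is nonnegative. Since $\phi$ is strictly convex, a bracket vanishes exactly when $\rho$ is constant $w_y$-a.s., that is, constant on $\{x:\gamma(x,y)>0\}$. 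If $\rho'(y)=0$, then $\rho$ vanishes on that set, so it is trivially constant and the term is zero. Summing over $y$ gives both the inequality and the characterization of equality.

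\emph{Step 4: general kernels.} Here the clean route is the chain rule for relative entropy, since Jensen applied to densities requires disintegration. Define the joint measures $\widehat m=m\otimes P$ and $\widehat\mu=\mu\otimes P$ on the product space. Because both use the same kernel, $\dd\widehat\mu/\dd\widehat m=\rho(x)$, and therefore $\Ent_{\widehat m}(\widehat\mu)=\Ent_m(\mu)$. The second marginals are $m'$ and $\mu P$. The chain rule, which uses the disintegration available on standard Borel spaces, gives
\[
\Ent_{\widehat m}(\widehat\mu)=\Ent_{m'}(\mu P)+\int\Ent\bigl(\widehat\mu_y\,\big|\,\widehat m_y\bigr)\,\dd(\mu P)(y)\ge\Ent_{m'}(\mu P).
\]
Alternatively one can cite the data-processing inequality for pushforward under a measurable map. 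The main technical care lies in Step 4: justifying the disintegration and checking that $\mu P\ll m'$. The latter holds because $\mu P(B)=\int\rho P(\cdot,B)\,\dd m$, so $m'(B)=0$ forces $P(\cdot,B)=0$ $m$-a.e.

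When $\Ent_m(\mu)=\infty$ there is nothing to prove. Otherwise, to handle integrability one can first truncate $\rho$ and then pass to the limit using monotone convergence and lower semicontinuity.
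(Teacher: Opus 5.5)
Your proof is correct. The finite part follows the paper; your Step~3 just makes explicit, as $m'(y)$ times a Jensen gap for the weights $\gamma(\cdot,y)/m'(y)$, the paper's one-line appeal to strict convexity of $u\log u$. For the kernel inequality, however, you take a different route from the paper. The paper sets $\Gamma(\dd x,\dd y)=m(\dd x)P(x,\dd y)$, identifies $\dd(\mu P)/\dd m'$ with $\mathbb E_\Gamma[\rho(x)\mid y]$, and applies conditional Jensen to $u\log u$ before integrating. You instead lift to $\widehat\mu=\mu\otimes P$, use $\Ent_{\widehat m}(\widehat\mu)=\Ent_m(\mu)$, and apply the chain rule to the projection onto the second factor. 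Your version gives the defect exactly, as $\int\Ent(\widehat\mu_y\mid\widehat m_y)\,\dd(\mu P)(y)$. This is the measure-theoretic counterpart of the identity in \eqref{eq:carrier-entropy}, so the finite and general parts are handled in one framework. The paper's version is shorter and more elementary. Your stated reason for avoiding it is actually reversed: conditional Jensen needs only the conditional expectation, identified directly by $\int_B\mathbb E_\Gamma[\rho\mid y]\,\dd m'=\int\rho(x)P(x,B)\,\dd m(x)=\mu P(B)$, with no disintegration. It is the chain rule that requires regular conditional probabilities, and hence the standard Borel hypothesis. Similarly, your closing truncation argument is unnecessary. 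Since $u\log u\ge-1/e$, all the integrals are well defined in $(-\infty,\infty]$, and the chain rule holds with values in $[0,\infty]$.
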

\begin{proof}
We first prove the kernel inequality. Define
$\Gamma(\dd x,\dd y)=m(\dd x)P(x,\dd y)$. If $\mu=\rho m$, then
$\mu P=\rho'm'$, where
$\rho'(y)=\mathbb E_\Gamma[\rho(x)\mid y]$. Conditional Jensen gives
\[
 \rho'(y)\log\rho'(y)
 \le\mathbb E_\Gamma[\rho(x)\log\rho(x)\mid y].
\]
Integration yields \eqref{eq:kernel-entropy-contraction}. Since
$u\log u$ is bounded below, the argument also applies when the
input entropy is infinite.

In the finite case, $\Gamma=\gamma$, and the marginal identities give
\begin{align*}
 \Ent_m(p)&=\sum_{x,y}\gamma(x,y)\rho(x)\log\rho(x),\\
 \Ent_{m'}(p')&=\sum_y m'(y)\rho'(y)\log\rho'(y)
 =\sum_{x,y}\gamma(x,y)\rho(x)\log\rho'(y).
\end{align*}
If $\rho'(y)=0$, then $\rho(x)=0$ for every $x$ with
$\gamma(x,y)>0$, so the corresponding terms vanish. 
Subtraction therefore gives the entropy difference in
\eqref{eq:carrier-entropy}. 
Strict convexity of $u\log u$ shows that equality holds precisely 
when $\rho$ is constant on each of the stated sets. 
Finally, the density of $\widehat p$ with respect to
$\widehat m$ is $\rho(x)$, whence
\[
 \Ent_{\widehat m}(\widehat p)
 =\sum_{x,y}\gamma(x,y)\rho(x)\log\rho(x)
 =\Ent_m(p).
\]
\end{proof}

This contraction is a consequence of averaging and does not involve
the metric. 
The curvature condition will compare entropy for arbitrary endpoint probabilities through transport in $(Z,d_Z)$. 
In these comparisons, 
all transport plans with the prescribed marginals are
admissible; their supports need not lie in the support of the
reference couplings.

\subsection{Admissible probability sections}
\label{sec:admissible-sections}

Let $Z_{\ge R}$ denote the
closure in $Z$ of the union of the layers from level $R$ onward.
Choose $R$ so that the geometric assumptions hold for every $n\ge R$,
and assume that, for some $L>0$,
\begin{equation}
 \diam Z_{\ge R}\le L.
 \label{eq:tail-diameter}
\end{equation}
We fix $R,L$ and write $W=W_{2,Z}$. 

Consider a segment $X_r,\ldots,X_{r+J}$ with $J\ge1$.
A \emph{probability section} over this segment is a sequence
$\boldsymbol\mu=(\mu_0,\ldots,\mu_J)$ with
$\mu_j\in\Pp(V_{r+j})$. Its action and endpoint distance are
\begin{equation}
 \mathcal A_{r,J}(\boldsymbol\mu)
 =J\sum_{j=0}^{J-1}W(\mu_j,\mu_{j+1})^2,
 \qquad D=W(\mu_0,\mu_J).
 \label{eq:section-action}
\end{equation}
The factor $J$ corresponds to the parameter increments $1/J$ on
$[0,1]$. The triangle inequality and Cauchy--Schwarz give
\begin{equation}
 D^2\le\left(\sum_{j=0}^{J-1}W(\mu_j,\mu_{j+1})\right)^2
 \le\mathcal A_{r,J}(\boldsymbol\mu).
 \label{eq:action-lower-bound}
\end{equation}

\begin{remark}[Joint transport formulation]
Writing $\Pi(\mu_0,\ldots,\mu_J)$ for the probability measures on
$V_r\times\cdots\times V_{r+J}$ with the prescribed marginals, we have
\[
 \mathcal A_{r,J}(\boldsymbol\mu)
 =\min_{\pi\in\Pi(\mu_0,\ldots,\mu_J)}
 J\int\sum_{j=0}^{J-1}d_Z(x_j,x_{j+1})^2\,\dd\pi.
\]
Every adjacent marginal has cost at least the corresponding squared
Wasserstein distance. Successive gluing of optimal adjacent couplings
attains their sum.
\end{remark}

\begin{definition}[Admissible probability sections]
\label{def:admissible-section}
For $\eta>0$, a probability section with endpoints $\mu_0,\mu_J$
is \emph{$\eta$-admissible} if
\begin{equation}
 \mathcal A_{r,J}(\boldsymbol\mu)\le D^2+\eta L^2.
 \label{eq:scd-action}
\end{equation}
The class of these sections is denoted by
$\mathcal G_{r,J}^{\eta}(\mu_0,\mu_J)$.

In particular, 
for $J=2$, a measure $\nu\in\Pp(V_{r+1})$ is an
\emph{$\eta$-admissible midpoint} of $\mu_-,\mu_+$ 
if
$(\mu_-,\nu,\mu_+)\in\mathcal G_{r,2}^{\eta}(\mu_-,\mu_+)$,
or equivalently if
\begin{equation}
  2W(\mu_-,\nu)^2+2W(\nu,\mu_+)^2 \le W(\mu_-,\mu_+)^2+\eta L^2.
  \label{eq:midpoint-action}
\end{equation}
\end{definition}

The admissible class depends only on the metric geometry and the
chosen tolerance. 
The next theorem shows that complete refinement
makes this class nonempty uniformly over the endpoints.

\begin{theorem}[Existence of admissible sections]
\label{thm:admissible-existence}
For any integer $J\ge2$
and  $\eta>0$, there exists
$r_{g}=r_{g}(J,\eta)\ge R$ such that,
for any $r\ge r_{g}$ and pair of probability measures
$\mu_0\in\Pp(V_r)$, $\mu_J\in\Pp(V_{r+J})$,
\[
 \mathcal G_{r,J}^{\eta}(\mu_0,\mu_J) \ne \varnothing.
\]
\end{theorem}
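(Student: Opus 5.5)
The plan is to build the section by hand. Take an optimal coupling of the endpoints and place, for each coupled pair, intermediate points along an approximate geodesic inside a single layer. Then move these points to the required layers by short jumps controlled by \eqref{eq:adjacent-layer-distance}. The error produced this way is $O_J(\ell_r)$, uniformly in the endpoints. Since $\ell_r\to0$ by \eqref{eq:complete-resolution}, this gives the claim for all sufficiently late $r$.

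\emph{Step 1 (discrete geodesics in one layer).} Fix $n\ge R$ and $x,y\in V_n$. By Definition~\ref{def:local-rule}, $d_n$ is the shortest-path metric of the adjacency structure. By \eqref{eq:local-resolution-scale}, every adjacency has length at most $s_n(x)\le\ell_n$. So a shortest path $x=z_0,z_1,\dots,z_k=y$ exists with $\sum_i d_n(z_i,z_{i+1})=d_n(x,y)$ and every step at most $\ell_n$. For $t\in[0,1]$, let $z(t)$ be the first vertex at which the cumulative length reaches $t\,d_n(x,y)$. Then
\[
 \bigl|d_n(z(s),z(t))-|t-s|\,d_n(x,y)\bigr|\le 2\ell_n .
\]

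\emph{Step 2 (transfer between layers).} By \eqref{eq:adjacent-layer-distance} and the triangle inequality, for $r\le n\le r+J$ every point of $V_n$ lies within $C\sum_{i=r}^{r+J-1}\ell_i\le CJ\ell_r$ of $V_r$, and conversely. Here I use that $\ell_n$ is nonincreasing. Fix a selection map $\phi_j:V_r\to V_{r+j}$ with $d_Z(z,\phi_j(z))\le CJ\ell_r$, and a map $\psi:V_{r+J}\to V_r$ with the same property.

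For $x\in V_r$ and $y\in V_{r+J}$, set $y'=\psi(y)$, take the discrete path of Step 1 in $V_r$ from $x$ to $y'$, and define
\[
 x_0=x,\qquad x_J=y,\qquad x_j=\phi_j\bigl(z(j/J)\bigr)\ \ (1\le j<J).
\]
Since $d_Z$ restricts to $d_r$ on $V_r$ by Lemma~\ref{lem:gluing}, we have $d_r(x,y')\le d_Z(x,y)+CJ\ell_r$. Combining this with Step 1 and the displacement bounds gives
\[
 d_Z(x_j,x_{j+1})\le \frac{d_Z(x,y)}{J}+c_J\ell_r
\]
for every $j$, with $c_J$ depending only on $C$ and $J$.

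\emph{Step 3 (the section).} Let $\pi$ be an optimal coupling of $\mu_0$ and $\mu_J$ for $d_Z^2$. Let $\mu_j$ be the push-forward of $\pi$ under $(x,y)\mapsto x_j$; everything is finite, so no measurability issues arise. The glued multi-marginal plan bounds each adjacent transport cost, as in the joint transport remark. Using $d_Z\le L$ on $Z_{\ge R}$ from \eqref{eq:tail-diameter}, we get
\[
 \mathcal A_{r,J}(\boldsymbol\mu)\le J\sum_{j=0}^{J-1}\int\Bigl(\frac{d_Z(x,y)}{J}+c_J\ell_r\Bigr)^2\dd\pi
 \le D^2+2Jc_JL\ell_r+J^2c_J^2\ell_r^2 .
\]
Choose $r_g\ge R$ such that the geometric assumptions hold from $r_g$ on and $2Jc_JL\ell_r+J^2c_J^2\ell_r^2\le\eta L^2$ for $r\ge r_g$. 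This is possible since $\ell_r\to0$. The resulting $\boldsymbol\mu$ lies in $\mathcal G_{r,J}^{\eta}(\mu_0,\mu_J)$, and the choice of $r_g$ is independent of the endpoints.

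The main obstacle is Step 1 together with the layer transfer. $Z$ need not be a length space, so the approximate-geodesic property has to be extracted from the shortest-path structure of each layer. The errors from Steps 1 and 2 then have to be kept additive and of order $\ell_r$, with a constant depending on $J$ only. This is where completeness \eqref{eq:complete-resolution} and the displacement bound \eqref{eq:reference-displacement} are used.
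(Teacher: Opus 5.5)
Your proposal is correct and is essentially the paper's argument. For Dirac endpoints, you sample a shortest path in $V_r$, run to a nearby representative of the far endpoint, at arclength fractions $j/J$; you move the samples to layers $r+j$ with displacement $O(J\ell_r)$; and you superpose these point sections through a $d_Z^2$-optimal coupling. The one cosmetic difference is that you take the layer transfers from the Hausdorff bound \eqref{eq:adjacent-layer-distance} rather than following reference-coupling chains explicitly, and that bound is derived from exactly those chains.
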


\begin{proof}
We first consider Dirac endpoint measures.
Let $x\in V_r$, $z\in V_{r+J}$, and put $c=d_Z(x,z)$.
The full support of the reference marginals allows us to follow
the support of the reference couplings backwards from $z$
to a point $z'\in V_r$.
Since $\ell_n$ is nonincreasing,
\eqref{eq:reference-displacement} gives
\[
 d_Z(z',z)\le CJ\ell_r.
\]

Choose a shortest path from $x$ to $z'$ in the graph of
local connections on $V_r$.
Its length satisfies
\[
 s=d_r(x,z')=d_Z(x,z')\le c+CJ\ell_r.
\]
Every connection on this path realizes the distance between its
endpoints; otherwise it could be replaced by a shorter path.
Its length is therefore at most $\ell_r$ by
\eqref{eq:local-resolution-scale}.

For $0\le j\le J$, choose a vertex $q_j$ nearest to the
arclength position $js/J$, with $q_0=x$ and $q_J=z'$.
The arclength error is at most $\ell_r/2$, and hence
\[
 d_r(q_j,q_{j+1})\le \frac{s}{J}+\ell_r.
\]
If $s=0$, take every $q_j=x$.

For $1\le j<J$, follow the supports of the reference couplings
for $j$ steps from $q_j$ to obtain $x_j\in V_{r+j}$.
Set $x_0=x$ and $x_J=z$.
The backward chain chosen above also gives the required estimate
at $j=J$, so
\[
 d_Z(q_j,x_j)\le Cj\ell_r
 \qquad(0\le j\le J).
\]
It follows that
\begin{align*}
 d_Z(x_j,x_{j+1})
 &\le Cj\ell_r+d_r(q_j,q_{j+1})+C(j+1)\ell_r\\
 &\le \frac{s}{J}+\ell_r+C(2j+1)\ell_r\\
 &\le \frac{c}{J}+O_{J,C}(\ell_r).
\end{align*}
Since $c\le L$, the resulting point section satisfies
\[
 J\sum_{j=0}^{J-1}d_Z(x_j,x_{j+1})^2
 \le c^2+O_{J,C}(L\ell_r+\ell_r^2).
\]
The error is uniform over the Dirac endpoint measures.
The complete refinement gives $\ell_r\to0$, 
we may therefore choose $r_{g}=r_{g}(J,\eta)\ge R$
such that, for every $r\ge r_{g}$ and every endpoint pair,
the constructed section satisfies
\[
 J\sum_{j=0}^{J-1}d_Z(x_j,x_{j+1})^2
 \le d_Z(x,z)^2+\eta L^2.
\]

Now let $\mu_0\in\Pp(V_r)$ and $\mu_J\in\Pp(V_{r+J})$,
and choose a $d_{Z}^{2}$-optimal coupling
$\pi\in\Pi(\mu_0,\mu_J)$.
For each pair $(x,z)$, select one of the point sections above
and define
\[
 \mu_j=\sum_{x,z}\pi(x,z)\,\delta_{x_j(x,z)}.
\]
The induced adjacent couplings give
\begin{align*}
 \mathcal A_{r,J}(\boldsymbol\mu)
 &\le
 \sum_{x,z}\pi(x,z)\,
 J\sum_{j=0}^{J-1}
 d_Z\bigl(x_j(x,z),x_{j+1}(x,z)\bigr)^2\\
 &\le
 \sum_{x,z}\pi(x,z)\bigl[d_Z(x,z)^2+\eta L^2\bigr]\\
 &=W(\mu_0,\mu_J)^2+\eta L^2.
\end{align*}
Thus $\boldsymbol\mu\in
\mathcal G_{r,J}^{\eta}(\mu_0,\mu_J)$,
with a starting level independent of the endpoint probabilities.
\end{proof}

The reference couplings in this proof select nearby representatives.
The adjacent couplings of the resulting section need not be supported
on the reference couplings. The theorem supplies an admissible class
at every prescribed accuracy; it makes no choice of a distinguished
section within that class.

\begin{lemma}[Distances to an admissible midpoint]
\label{lem:midpoint-defect}
If $\nu$ is an $\eta$-admissible midpoint of $\mu_-,\mu_+$ and
$D=W(\mu_-,\mu_+)$, then each of $W(\mu_-,\nu)$ and
$W(\nu,\mu_+)$ differs from $D/2$ by at most $L\sqrt\eta/2$.
\end{lemma}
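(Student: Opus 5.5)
Write $a=W(\mu_-,\nu)$ and $b=W(\nu,\mu_+)$. The plan is to combine the admissibility inequality \eqref{eq:midpoint-action}, $2a^2+2b^2\le D^2+\eta L^2$, with the triangle inequality $D\le a+b$ in $(\Pp(Z),W)$. The triangle inequality holds because $W=W_{2,Z}$ is a metric on probability measures in the common realization $Z$ of Lemma~\ref{lem:gluing}, and all three measures live there. The key algebraic identity is the parallelogram-type decomposition
\[
 2a^2+2b^2=(a+b)^2+(a-b)^2 .
\]

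First I will control the asymmetry. From $(a+b)^2\ge D^2$ the admissibility bound gives $(a-b)^2\le \eta L^2$, so $|a-b|\le L\sqrt\eta$.

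Next I will control the total length $s=a+b$. Writing $a=\tfrac{s}{2}+\tfrac{a-b}{2}$ and $b=\tfrac{s}{2}-\tfrac{a-b}{2}$, it suffices to show $|s-D|\le L\sqrt\eta$ and then combine the two estimates carefully so that the final error is only $L\sqrt\eta/2$. The lower bound $s\ge D$ is the triangle inequality. For the upper bound, $(a+b)^2\le 2a^2+2b^2\le D^2+\eta L^2$, so $s\le\sqrt{D^2+\eta L^2}\le D+L\sqrt\eta$.

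A naive combination of these two estimates gives only the weaker bound $L\sqrt\eta$, so I will instead bound $a$ directly. Set $u=a-D/2$ and $v=b-D/2$. The triangle inequality gives $u+v\ge0$, and admissibility becomes
\[
 2(u+\tfrac D2)^2+2(v+\tfrac D2)^2\le D^2+\eta L^2,
 \quad\text{that is,}\quad
 2u^2+2v^2+2D(u+v)\le \eta L^2 .
\]
Since $u+v\ge0$, this yields $2u^2+2v^2\le\eta L^2$. Using $v\ge -u$ gives $v^2\ge u^2$ when $u\ge0$, so then $4u^2\le\eta L^2$ and $u\le L\sqrt\eta/2$. For $u<0$, I use $v\ge -u=|u|$, which again gives $v^2\ge u^2$ and the same bound $|u|\le L\sqrt\eta/2$. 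The symmetric argument handles $v$, so each of $a$ and $b$ differs from $D/2$ by at most $L\sqrt\eta/2$.

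The only delicate point is obtaining the sharp constant $1/2$ rather than $1$, which is handled by this substitution. Everything else is routine.
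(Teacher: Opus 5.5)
Your case $u<0$ is correct: there $v\ge -u=|u|$, so $4u^2\le 2u^2+2v^2\le\eta L^2$. The case $u\ge0$, which gives the upper bound $a\le D/2+L\sqrt\eta/2$, fails. The step ``$v\ge -u$ gives $v^2\ge u^2$ when $u\ge0$'' is false; for example $u=1$, $v=0$.

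This is not a cosmetic slip. The only metric fact you use is $a+b\ge D$, and that plus admissibility cannot give the sharp upper bound. Take $D=0$, $b=0$, $a=L\sqrt{\eta/2}$. Then $2a^2+2b^2\le D^2+\eta L^2$ and $a+b\ge D$ both hold, yet $a-D/2=L\sqrt\eta/\sqrt2>L\sqrt\eta/2$.

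The route ``drop $2D(u+v)$, then show $v^2\ge u^2$'' cannot be repaired either, even with every triangle inequality available. The choice $u=D/4$, $v=0$ (that is, $a=3D/4$, $b=D/2$) satisfies all triangle inequalities but has $v^2<u^2$. So the cross term $2D(u+v)$ you discarded is exactly what is needed when $D>0$.

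The missing ingredient is the other triangle inequality $a\le D+b$. Combined with $a+b\ge D$, it gives $b\ge|D-a|$, and it must be used without dropping any terms:
\[
 2a^2+2b^2\ \ge\ 2a^2+2(D-a)^2\ =\ D^2+4\bigl(a-D/2\bigr)^2 .
\]
Admissibility then yields $4(a-D/2)^2\le\eta L^2$ for both signs of $a-D/2$. Interchanging $a$ and $b$ handles $b$.

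This is the paper's proof. It makes your case split and your preliminary bounds on $|a-b|$ and $a+b$ unnecessary; those bounds are correct but unused. In your variables, the same fix reads $(v+D/2)^2\ge(u-D/2)^2$, which turns the admissibility inequality directly into $4u^2\le\eta L^2$.
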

\begin{proof}
Write $a=W(\mu_-,\nu)$ and $b=W(\nu,\mu_+)$. The triangle
inequality gives $b\ge|D-a|$, hence
\[
 D^2+4(a-D/2)^2=2a^2+2(D-a)^2
 \le2a^2+2b^2\le D^2+\eta L^2.
\]
Interchanging $a$ and $b$ gives the second bound.
\end{proof}

More generally, Lemma~\ref{lem:finite-section-distance} will show that
an $\eta$-admissible section satisfies
\[
 \left|W(\mu_i,\mu_k)-\frac{k-i}{J}D\right|
 \le L\sqrt\eta
 \qquad(0\le i<k\le J).
\]
Thus admissible sections approximate the distance relations of a
constant-speed Wasserstein geodesic at the parameters $j/J$.

\subsection{The SCD condition}

Fix $K\in\R$ and put $\kappa=K/N$. The comparison coefficients are
\[
 \mathfrak s_\kappa(a)=
 \begin{cases}
 \sin(\sqrt\kappa a)/\sqrt\kappa,&\kappa>0,\\
 a,&\kappa=0,\\
 \sinh(\sqrt{-\kappa}a)/\sqrt{-\kappa},&\kappa<0,
 \end{cases}
 \qquad
 \sigma_\kappa^{(t)}(a)=
 \frac{\mathfrak s_\kappa(ta)}{\mathfrak s_\kappa(a)},
 \quad \sigma_\kappa^{(t)}(0)=t.
\]
For $K>0$, we require
\begin{equation}
 KL^2<N\pi^2.
 \label{eq:strict-comparison-range}
\end{equation}
The coefficients are then nonnegative and smooth for
$0\le t\le1$ and $0\le a\le L$. At $t=1/2$, they reduce to
\begin{equation}
 S_\kappa(a):=\sigma_\kappa^{(1/2)}(a)=
 \begin{cases}
 [2\cos(\sqrt\kappa a/2)]^{-1},&\kappa>0,\\
 1/2,&\kappa=0,\\
 [2\cosh(\sqrt{-\kappa}a/2)]^{-1},&\kappa<0.
 \end{cases}
 \label{eq:mid-coefficient}
\end{equation}

\begin{definition}[The SCD condition]
\label{def:scd}
Under the standing assumptions above, 
a refinement system $\Sigma$ satisfies $\SCD(K,N)$
if, for every integer $J\ge2$ and every $\eps>0$, there exists
$r_0=r_0(J,\eps)\ge R$ such that, for every $r\ge r_0$ and every
pair $\mu_0\in\Pp(V_r)$, $\mu_J\in\Pp(V_{r+J})$, there is a section
\[
 \boldsymbol\mu\in\mathcal G_{r,J}^{\eps}(\mu_0,\mu_J)
\]
satisfying
\begin{equation}
 U_{r+j}(\mu_j)\ge
 \sigma_{K/N}^{(1-j/J)}(D)U_r(\mu_0)
 +\sigma_{K/N}^{(j/J)}(D)U_{r+J}(\mu_J)-\eps,
 \qquad 1\le j<J,
 \label{eq:scd-entropy}
\end{equation}
where $D=W(\mu_0,\mu_J)$.
\end{definition}

Theorem~\ref{thm:admissible-existence} ensures that admissible
sections exist on sufficiently late refinement segments
of each fixed length. The SCD condition requires a single
such section to satisfy \eqref{eq:scd-entropy}
at every interior index.
Moreover, 
for fixed $J$ and $\eps$, the starting level $r_0$
is uniform over the endpoint measures.
Using the same tolerance for admissibility and entropy
does not change the condition: separate tolerances can be met
by applying the definition with their minimum.

\begin{definition}[Midpoint condition]
\label{def:midpoint}
Under the same standing assumptions, 
a refinement $\Sigma$ satisfies
$\SCDmid(K,N)$ if, for every $\eps>0$, there exists
$r_0=r_0(\eps)\ge R$ such that, for every $r\ge r_0$ and every
pair $\mu_-\in\Pp(V_r)$, $\mu_+\in\Pp(V_{r+2})$, there is an
$\eps$-admissible midpoint $\nu\in\Pp(V_{r+1})$ satisfying
\begin{equation}
 U_{r+1}(\nu)\ge S_{K/N}(D)\bigl[U_r(\mu_-)+U_{r+2}(\mu_+)\bigr]-\eps,
 \label{eq:midpoint-entropy}
\end{equation}
where $D=W(\mu_-,\mu_+)$.
\end{definition}

The midpoint condition compares entropy at the same measure whose
distances are controlled by admissibility. 
It is the $J=2$ case of $\SCD(K,N)$. 
Perhaps surprisingly, this special case already implies
the full condition.
We prove their equivalence in the next section.

\section{Midpoint characterization}
\label{sec:direct-midpoint}

In this section, we prove that $\SCD(K,N)$ and $\SCDmid(K,N)$ are equivalent.
The main point is to obtain all entropy comparisons on the same
admissible section.
Starting with a section supplied by
Theorem~\ref{thm:admissible-existence}, we successively replace
its interior measures using the midpoint condition 
and show that finitely many replacements suffice to obtain
the required entropy comparisons while keeping the action
within the admissible bound.

Throughout this section, we retain the notation and assumptions
of Section~\ref{sec:entropy} 
and write $C_S=\Lip(S_\kappa|_{[0,L]})$
for the Lipschitz constant of the midpoint coefficient.

\begin{theorem}[Midpoint characterization]
\label{thm:direct-midpoint}
Under the standing assumptions, 
a refinement system $\Sigma$ satisfies $\SCD(K,N)$
if and only if it satisfies $\SCDmid(K,N)$.
\end{theorem}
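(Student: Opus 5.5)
The forward implication is immediate: $\SCDmid(K,N)$ is the case $J=2$ of Definition~\ref{def:scd}, because $\sigma_\kappa^{(1/2)}=S_\kappa$. For the converse, fix $J\ge2$ and $\eps>0$. The plan is to take an $\eta$-admissible section from Theorem~\ref{thm:admissible-existence}, with $\eta\ll\eps$. We then improve its interior measures by a finite, monotone sequence of single-site midpoint replacements. Each replacement uses the midpoint condition on the three consecutive layers $r+j-1,r+j,r+j+1$. This is why we need $r\ge\max\{r_g(J,\eta),r_0(\eps')\}$, where $\eps'$ is an auxiliary tolerance; that threshold is uniform in the endpoints.

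\emph{Replacement step.} Write $u_j=U_{r+j}(\mu_j)$ and $d_j=W(\mu_{j-1},\mu_{j+1})$. Call index $j$ \emph{deficient} if
\[
 u_j< S_\kappa(d_j)(u_{j-1}+u_{j+1})-2\eps' .
\]
For a deficient $j$, replace $\mu_j$ by an $\eps'$-admissible midpoint $\nu$ of $\mu_{j-1},\mu_{j+1}$ that satisfies \eqref{eq:midpoint-entropy}. Two properties follow.

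\begin{enumerate}
\item The new value satisfies $U(\nu)\ge u_j+\eps'$. Hence $\sum_j u_j$ increases by at least $\eps'$, and since $0\le u_j\le1$, at most $J/\eps'$ replacements can occur.
\item By \eqref{eq:midpoint-action} and $d_j\le W(\mu_{j-1},\mu_j)+W(\mu_j,\mu_{j+1})$, the new local action satisfies
\[
 2W(\mu_{j-1},\nu)^2+2W(\nu,\mu_{j+1})^2\le 2W(\mu_{j-1},\mu_j)^2+2W(\mu_j,\mu_{j+1})^2+\eps' L^2 .
\]
So $\mathcal A_{r,J}$ grows by at most $\tfrac J2\eps' L^2$ per replacement.
\end{enumerate}

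The process therefore stops after finitely many steps with total action error at most $\eta L^2+\tfrac{J^2}{2}L^2$ times a bounded multiple of $\eps'$. This is not quite small, because the replacement count is $J/\eps'$. I would repair this by requiring a deficiency threshold $\delta$ independent of $\eps'$, so that each replacement raises $\sum u_j$ by at least $\delta-\eps'$. The count is then at most $J/(\delta-\eps')$, and we choose $\eps'\ll\delta^2/J^2$ so that the accumulated action stays below $\eps L^2$. The terminal section is then $\eps$-admissible and has no deficient index, so every interior $j$ satisfies
\[
 u_j\ge S_\kappa(d_j)(u_{j-1}+u_{j+1})-\delta .
\]

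\emph{Discrete comparison.} Admissibility and the distance estimate mentioned after Lemma~\ref{lem:midpoint-defect} (Lemma~\ref{lem:finite-section-distance}) give $|d_j-2D/J|\le 2L\sqrt{\eps}$. With $C_S=\Lip(S_\kappa|_{[0,L]})$, this yields
\[
 u_j\ge S_\kappa(2D/J)(u_{j-1}+u_{j+1})-\delta-4C_SL\sqrt\eps .
\]
The endpoints are untouched and $D$ changes by at most $L\sqrt{\eps}$, which is absorbed using the Lipschitz dependence of $\sigma_\kappa^{(t)}$. The comparison profile $g_j=\sigma_\kappa^{(1-j/J)}(D)u_0+\sigma_\kappa^{(j/J)}(D)u_J$ satisfies $g_j=S_\kappa(2D/J)(g_{j-1}+g_{j+1})$ exactly, by the addition formula $\mathfrak s_\kappa(a-h)+\mathfrak s_\kappa(a+h)=2\cos(\sqrt\kappa h)\mathfrak s_\kappa(a)$ (with $\cosh$ when $\kappa<0$). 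Thus $v=g-u$ satisfies $v_j\le S(v_{j-1}+v_{j+1})+\theta$ with $v_0=v_J=0$.

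A discrete maximum principle then gives $\max_j v_j\le C(\kappa,L,J)\,\theta$. For $\kappa\le0$ we have $2S\le1$ and the bound is elementary. For $\kappa>0$ we need a positive strict supersolution, and I would use $w_j=\mathfrak s_\kappa\bigl((j+\tfrac12)D'/J\bigr)$ with $D'$ slightly larger than $D$ but below $\pi/\sqrt\kappa$; the range condition \eqref{eq:strict-comparison-range} makes this available. Choosing $\delta$ and $\eps$ small, depending on $J$ and the target tolerance, yields \eqref{eq:scd-entropy}.

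\emph{Main obstacle.} I expect the hardest step to be balancing the replacement count against the accumulated action error, and making the maximum principle quantitative uniformly in $D\in[0,L]$ when $K>0$. I would try the $\delta$-threshold bookkeeping above first.
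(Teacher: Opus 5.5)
Your argument is correct, but it organizes the iteration differently from the paper. The paper fixes $B$ full sweeps in advance, i.e. $M=B(J-1)$ replacements. At every step it converts $S_\kappa(d_j)$ to $s=S_\kappa(2D/J)$ via Lemma~\ref{lem:midpoint-replacement}, which is why every intermediate section must stay $\bar\eta$-admissible. Lemma~\ref{lem:finite-entropy-contraction} then shows that the sine-weighted deficit contracts by $q_J$ per sweep, up to an additive error.

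You instead repair only deficient sites. Termination comes from the potential $\sum_j u_j\le J-1$, which rises by at least $\delta-\eps'$ per replacement. The conversion to $S_\kappa(2D/J)$ and a discrete maximum principle are then applied once, to the terminal section.

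Your bookkeeping is sound. The total action growth is at most $(J-1)J\eps'L^2/(2(\delta-\eps'))$. The quantities can be chosen in the order: target tolerance, then $\delta$ and the final admissibility tolerance, then $\eta$, then $\eps'$, then $r$, all independent of the endpoints. Your remark that $D$ changes is unnecessary, since the endpoints are fixed.

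What your route buys is a clean static local-to-global principle, with no need to track the dynamics. What the paper's route buys is a deterministic replacement count with $B$ logarithmic in $1/\eps$. Your count is of order $J/\delta$, which costs roughly one extra power of the target tolerance in the midpoint accuracy $\eps'$ you must request.

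One step needs a concrete fix. With $w_j=\mathfrak s_\kappa((j+\tfrac12)D'/J)$, the maximum-principle argument needs $w_J\ge0$, i.e. $(1+\tfrac1{2J})D'\le\pi/\sqrt\kappa$. This fails inside the comparison range, for instance when $J=2$ and $\sqrt\kappa D\ge0.8\pi$. Moreover, taking $D'$ only slightly above $D$ makes $\cos(\sqrt\kappa D'/J)/\cos(\sqrt\kappa D/J)$ tend to $1$, which destroys uniformity in $D$.

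Use weights independent of $D$, for example the paper's $w_j=\sin(\pi j/J)$. From $w_{j-1}+w_{j+1}=2\cos(\pi/J)w_j$ you get $S_\kappa(2D/J)(w_{j-1}+w_{j+1})\le q_Jw_j$ for all $D\in[0,L]$. Here $q_J=\cos(\pi/J)/\cos(\sqrt\kappa L/J)<1$ when $\kappa>0$, and $q_J=\cos(\pi/J)$ when $\kappa\le0$. This also settles $\kappa\le0$ in one stroke, including $\kappa=0$, where $2S=1$ and a naive maximum argument does not close.

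Since $v_0=v_J=0$, the vanishing of $w$ at the endpoints is harmless. Maximizing $v_j^+/w_j$ over interior $j$ then gives $\max_j v_j\le\theta/((1-q_J)w_*)$, which is exactly the stationary term in the paper's sweep estimate.
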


The implication $\SCD(K,N)\Rightarrow\SCDmid(K,N)$
follows by taking $J=2$.
For the converse, 
we first establish a few preliminary lemmas
to estimate distances along
admissible sections and control the effect of
a single midpoint replacement.

\subsection{Action stability and midpoint replacement}

\begin{lemma}[Distances along an admissible section]
\label{lem:finite-section-distance}
Let $\boldsymbol\mu\in\mathcal G_{r,J}^{\eta}(\mu_0,\mu_J)$
and $D=W(\mu_0,\mu_J)$. Then
\begin{equation}
 \left|W(\mu_i,\mu_k)-\frac{k-i}{J}D\right|
 \le L\sqrt\eta,
 \qquad 0\le i<k\le J.
 \label{eq:finite-section-distance}
\end{equation}
\end{lemma}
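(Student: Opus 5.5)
The plan is to reduce everything to the consecutive distances $w_j:=W(\mu_j,\mu_{j+1})$, $0\le j<J$, and their deviations from the constant-speed value,
\[
 e_j:=w_j-\frac DJ .
\]
Expanding the action in these variables gives
\[
 \mathcal A_{r,J}(\boldsymbol\mu)=J\sum_{j=0}^{J-1}\Bigl(\frac DJ+e_j\Bigr)^2
 =D^2+2D\sum_{j}e_j+J\sum_j e_j^2 .
\]
By the triangle inequality, $\sum_j w_j\ge W(\mu_0,\mu_J)=D$, that is, $\sum_j e_j\ge0$. Hence the middle term is nonnegative. The admissibility bound \eqref{eq:scd-action} then yields the key estimate
\[
 J\sum_{j=0}^{J-1}e_j^2\le\eta L^2 .
\]
I will use only this inequality, together with Cauchy--Schwarz, in the form $\sum_{j\in I}|e_j|\le\sqrt{|I|}\,\bigl(\sum_j e_j^2\bigr)^{1/2}\le\sqrt{|I|/J}\,L\sqrt\eta$ for any index set $I\subset\{0,\dots,J-1\}$.

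For the upper bound, apply the triangle inequality along the section:
\[
 W(\mu_i,\mu_k)\le\sum_{j=i}^{k-1}w_j=\frac{k-i}{J}D+\sum_{j=i}^{k-1}e_j .
\]
The last sum is at most $\sqrt{(k-i)/J}\,L\sqrt\eta\le L\sqrt\eta$.

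For the lower bound, I use the triangle inequality the other way, going from $\mu_0$ to $\mu_J$ through $\mu_i$ and $\mu_k$:
\[
 D\le\sum_{j<i}w_j+W(\mu_i,\mu_k)+\sum_{j\ge k}w_j .
\]
This gives
\[
 W(\mu_i,\mu_k)\ge D-\frac{J-(k-i)}{J}D-\sum_{j\notin[i,k)}e_j
 =\frac{k-i}{J}D-\sum_{j\notin[i,k)}e_j .
\]
The outside sum contains $J-(k-i)$ terms. It is therefore bounded by $\sqrt{(J-k+i)/J}\,L\sqrt\eta\le L\sqrt\eta$, and it is empty, hence zero, when $i=0$ and $k=J$. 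Combining the upper and lower bounds gives \eqref{eq:finite-section-distance}.

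There is no real obstacle here. The one step needing care is the sign observation $\sum_j e_j\ge0$: it is what allows the cross term $2D\sum_j e_j$ to be dropped, so that admissibility controls the full quadratic deviation $J\sum_j e_j^2$ and not merely the excess length. This is the same mechanism as in Lemma~\ref{lem:midpoint-defect}, which is the case $J=2$.
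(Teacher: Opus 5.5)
Your proof is correct and follows the paper's argument essentially step by step. It uses the same identity $J\sum_j(w_j-D/J)^2=\mathcal A_{r,J}(\boldsymbol\mu)-2D\sum_j w_j+D^2\le\eta L^2$ (via $\sum_j w_j\ge D$), the same Cauchy--Schwarz bound over index sets, and the same two triangle-inequality estimates. As a minor aside, Lemma~\ref{lem:midpoint-defect} is not literally the $J=2$ case of this argument: its use of $b\ge|D-a|$ yields the sharper error $L\sqrt\eta/2$.
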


\begin{proof}
Write $a_j=W(\mu_j,\mu_{j+1})$ and
$T=\sum_{j=0}^{J-1}a_j\ge D$.
Admissibility gives
\[
 J\sum_{j=0}^{J-1}(a_j-D/J)^2
 =\mathcal A_{r,J}(\boldsymbol\mu)-2DT+D^2
 \le\eta L^2.
\]
Hence, for every $I\subset\{0,\ldots,J-1\}$,
Cauchy--Schwarz yields
\[
 \left|\sum_{j\in I}(a_j-D/J)\right|
 \le L\sqrt{\frac{\eta|I|}{J}}.
\]

Fix $i<k$, set $m=k-i$ and $I=\{i,\ldots,k-1\}$.
The triangle inequality gives
\begin{align*}
 W(\mu_i,\mu_k)
 &\le\sum_{j\in I}a_j
 \le\frac{mD}{J}+L\sqrt{\frac{\eta m}{J}},\\
 W(\mu_i,\mu_k)
 &\ge D-\sum_{j\notin I}a_j
 \ge\frac{mD}{J}-L\sqrt{\frac{\eta(J-m)}{J}}.
\end{align*}
Both error terms are at most $L\sqrt\eta$.
\end{proof}

\begin{lemma}[Action and entropy under midpoint replacement]
\label{lem:midpoint-replacement}
Let $\boldsymbol\mu=(\mu_0,\ldots,\mu_J)$ be a probability
section over $X_r,\ldots,X_{r+J}$.
Fix $\delta>0$ and an index $1\le j<J$.
Let $\nu\in\Pp(V_{r+j})$ be a $\delta$-admissible midpoint
of $\mu_{j-1},\mu_{j+1}$ satisfying the entropy comparison
in Definition~\ref{def:midpoint} with tolerance $\delta$.
Denote by $\boldsymbol\mu^{\rm new}$ the section obtained
by replacing $\mu_j$ with $\nu$, and put $D=W(\mu_0,\mu_J)$.
Then
\[
 \mathcal A_{r,J}(\boldsymbol\mu^{\rm new})
 \le\mathcal A_{r,J}(\boldsymbol\mu)
       +\frac{J\delta L^2}{2}.
\]
If, moreover,
$\boldsymbol\mu\in\mathcal G_{r,J}^{\bar\eta}(\mu_0,\mu_J)$
for some $\bar\eta>0$, then
\begin{equation}
 U_{r+j}(\nu)
 \ge s\bigl[U_{r+j-1}(\mu_{j-1})
            +U_{r+j+1}(\mu_{j+1})\bigr]-e,
 \label{eq:finite-entropy-update}
\end{equation}
where $s=S_\kappa(2D/J)$ and
$e=\delta+2C_SL\sqrt{\bar\eta}$.
\end{lemma}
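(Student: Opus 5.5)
The proof treats the two assertions separately. In both, the only input is the local structure of the replacement: just the two adjacent action terms and the midpoint coefficient change.

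\textbf{Action bound.} Replacing $\mu_j$ by $\nu$ changes only the two summands $W(\mu_{j-1},\mu_j)^2+W(\mu_j,\mu_{j+1})^2$ in \eqref{eq:section-action}. Write $a'=W(\mu_{j-1},\mu_j)$, $b'=W(\mu_j,\mu_{j+1})$ for the old distances, and $a=W(\mu_{j-1},\nu)$, $b=W(\nu,\mu_{j+1})$ for the new ones. Put $D'=W(\mu_{j-1},\mu_{j+1})$.

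The $\delta$-admissibility \eqref{eq:midpoint-action} of $\nu$ gives $2a^2+2b^2\le D'^2+\delta L^2$. On the other hand, the triangle inequality and $(a'+b')^2\le 2a'^2+2b'^2$ give $D'^2\le 2a'^2+2b'^2$. Combining the two,
\[
 a^2+b^2\le a'^2+b'^2+\tfrac{\delta L^2}{2}.
\]
Multiplying by the factor $J$ in $\mathcal A_{r,J}$ yields the claimed bound $\mathcal A_{r,J}(\boldsymbol\mu^{\rm new})\le\mathcal A_{r,J}(\boldsymbol\mu)+J\delta L^2/2$. This step uses no entropy and no admissibility of the whole section.

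\textbf{Entropy update.} The midpoint entropy comparison \eqref{eq:midpoint-entropy}, applied on the three consecutive layers $X_{r+j-1},X_{r+j},X_{r+j+1}$ with tolerance $\delta$, gives
\[
 U_{r+j}(\nu)\ge S_\kappa(D')\bigl[U_{r+j-1}(\mu_{j-1})+U_{r+j+1}(\mu_{j+1})\bigr]-\delta.
\]
It remains to replace $S_\kappa(D')$ by $s=S_\kappa(2D/J)$.

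Since $\boldsymbol\mu\in\mathcal G^{\bar\eta}_{r,J}$, Lemma~\ref{lem:finite-section-distance} with $i=j-1$, $k=j+1$ gives $|D'-2D/J|\le L\sqrt{\bar\eta}$. Both arguments lie in $[0,L]$:
\begin{itemize}
\item $D'\le\diam Z_{\ge R}\le L$ by \eqref{eq:tail-diameter}, taking $r\ge R$ as in the standing assumptions;
\item $2D/J\le D\le L$, since $J\ge2$.
\end{itemize}
Hence the Lipschitz bound applies, and $S_\kappa(D')\ge s-C_SL\sqrt{\bar\eta}$.

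Because $0\le U_n\le 1$, the bracket lies in $[0,2]$. The substitution therefore costs at most $2C_SL\sqrt{\bar\eta}$, which gives \eqref{eq:finite-entropy-update} with $e=\delta+2C_SL\sqrt{\bar\eta}$. Nonnegativity of $S_\kappa$ on $[0,L]$, guaranteed by \eqref{eq:strict-comparison-range}, is not even needed here, since we only use a lower bound on $S_\kappa(D')$ multiplied by a nonnegative quantity.

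\textbf{Main obstacle.} The only delicate point is checking that both distances $D'$ and $2D/J$ lie in $[0,L]$, so that the Lipschitz constant $C_S$ of $S_\kappa|_{[0,L]}$ applies. This is handled by the tail diameter bound together with $J\ge2$, as above.
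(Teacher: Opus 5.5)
Your proposal is correct and follows the paper's proof essentially step for step. The action bound comes from comparing the $\delta$-admissibility inequality with $c^2\le 2a^2+2b^2$ for the two changed terms. The entropy update comes from the midpoint comparison at $S_\kappa(c)$, Lemma~\ref{lem:finite-section-distance} with $i=j-1$, $k=j+1$, the Lipschitz constant $C_S$ on $[0,L]$, and $U_n\le 1$; your explicit check that both $c$ and $2D/J$ lie in $[0,L]$ fills in a point the paper only asserts.
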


\begin{proof}
Set
\[
 a=W(\mu_{j-1},\mu_j),\qquad
 b=W(\mu_j,\mu_{j+1}),\qquad
 c=W(\mu_{j-1},\mu_{j+1}).
\]
The triangle inequality gives
$a^2+b^2\ge(a+b)^2/2\ge c^2/2$,
whereas the $\delta$-admissibility of $\nu$ gives
\[
 W(\mu_{j-1},\nu)^2+W(\nu,\mu_{j+1})^2
 \le\frac{c^2+\delta L^2}{2}.
\]
Only these two contributions to the action change.
Subtracting the old contribution and multiplying by $J$
proves the action estimate.

Now suppose that
$\boldsymbol\mu\in\mathcal G_{r,J}^{\bar\eta}(\mu_0,\mu_J)$.
Lemma~\ref{lem:finite-section-distance} gives
\[
 |c-2D/J|\le L\sqrt{\bar\eta}.
\]
Both $c$ and $2D/J$ lie in $[0,L]$, so
\[
 |S_\kappa(c)-s|\le C_SL\sqrt{\bar\eta}.
\]
Write $u_{j-1}=U_{r+j-1}(\mu_{j-1})$ and
$u_{j+1}=U_{r+j+1}(\mu_{j+1})$.
The entropy comparison for $\nu$ and the bound $0<U_n\le1$
yield
\begin{align*}
 U_{r+j}(\nu)
 &\ge S_\kappa(c)(u_{j-1}+u_{j+1})-\delta\\
 &\ge s(u_{j-1}+u_{j+1})
       -2C_SL\sqrt{\bar\eta}-\delta.
\end{align*}
This proves \eqref{eq:finite-entropy-update}.
\end{proof}

\subsection{Simultaneous entropy comparisons}

Fix the endpoint measures and put
$D=W(\mu_0,\mu_J)$, $u_0=U_r(\mu_0)$, and
$u_J=U_{r+J}(\mu_J)$.
Define the comparison values
\begin{equation}
 h_j=\sigma_\kappa^{(1-j/J)}(D)u_0
       +\sigma_\kappa^{(j/J)}(D)u_J,
 \qquad 0\le j\le J.
 \label{eq:finite-entropy-target}
\end{equation}
The addition formulas for sine and hyperbolic sine give
\begin{equation}
 h_j=s(h_{j-1}+h_{j+1}),\qquad
 s=S_\kappa(2D/J),\qquad 1\le j<J.
 \label{eq:finite-target-recurrence}
\end{equation}
When $\kappa=0$ or $D=0$, the sequence $h_j$ is affine,
and the same identity holds with $s=1/2$.

Since $0<u_0,u_J\le1$, the comparison values satisfy
\begin{equation}
 0\le h_j\le C_h,\qquad
 C_h=
 \begin{cases}
 1,&\kappa\le0,\\
 1/\cos(\sqrt\kappa L/2),&\kappa>0.
 \end{cases}
 \label{eq:finite-target-bound}
\end{equation}
Indeed, for $\kappa\le0$, convexity of $\mathfrak s_\kappa$
gives $\sigma_\kappa^{(t)}(D)\le t$ for $0\le t\le1$.
For $\kappa>0$, the addition formula gives
\[
 \sigma_\kappa^{(1-t)}(D)+\sigma_\kappa^{(t)}(D)
 =\frac{\cos((t-1/2)\sqrt\kappa D)}
        {\cos(\sqrt\kappa D/2)}
 \le\frac{1}{\cos(\sqrt\kappa L/2)}.
\]
The identity at $D=0$ is understood by continuity.

The next lemma turns the local estimate
\eqref{eq:finite-entropy-update} into simultaneous lower bounds
at all interior indices.
\begin{lemma}[Contraction of the entropy deficit]
\label{lem:finite-entropy-contraction}
Fix $J\ge3$ and set
\[
 w_j=\sin(\pi j/J),\qquad
 w_* =\min_{1\le j<J}w_j>0,
 \qquad
 q_J=
 \begin{cases}
 \cos(\pi/J),&\kappa\le0,\\
 \displaystyle\frac{\cos(\pi/J)}{\cos(\sqrt\kappa L/J)},
     &\kappa>0.
 \end{cases}
\]
Then $0<q_J<1$. 
Let $e\ge0$ and keep $u_0=h_0$, $u_J=h_J$ fixed. 
A \emph{sweep} consists of replacing
the interior values in the order $1,\ldots,J-1$, 
requiring
$u_j^{\rm new}\ge s(u_{j-1}+u_{j+1})-e$ 
at each replacement, 
with the current neighboring values.
If the initial values are nonnegative, 
then after $B$ times of sweep,
\begin{equation}
 \max_{1\le j<J}\frac{(h_j-u_j)_+}{w_j}
 \le q_J^B\frac{C_h}{w_*}
       +\frac{e}{(1-q_J)w_*}.
 \label{eq:finite-sweep-contraction}
\end{equation}
\end{lemma}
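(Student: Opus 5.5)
The plan is to work with the deficits $g_j:=(h_j-u_j)_+$, which vanish at $j=0,J$ because $u_0=h_0$ and $u_J=h_J$ are kept fixed. I would show that a single replacement obeys a linear sub-recursion, and then control it with the weighted sup norm $\max_j g_j/w_j$. The weight $w_j=\sin(\pi j/J)$ is a positive eigenvector of the discrete Laplacian with Dirichlet ends:
\[
 w_{j-1}+w_{j+1}=2\cos(\pi/J)\,w_j ,\qquad 1\le j<J,
\]
and this identity holds also at $j=1$ and $j=J-1$ because $w_0=w_J=0$.

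\emph{Step 1 (range of $q_J$ and local recursion).} For $\kappa\le0$ we have $q_J=\cos(\pi/J)\in(0,1)$ since $J\ge3$. For $\kappa>0$, \eqref{eq:strict-comparison-range} gives $\sqrt\kappa L<\pi$. Hence $0<\sqrt\kappa L/J<\pi/J<\pi/2$, so $\cos(\sqrt\kappa L/J)>\cos(\pi/J)>0$ and $0<q_J<1$.

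Next, at a replacement at index $j$, subtract the requirement $u_j^{\rm new}\ge s(u_{j-1}+u_{j+1})-e$ from the recurrence \eqref{eq:finite-target-recurrence}. Since $s\ge0$ and $h_i-u_i\le g_i$, this gives
\[
 g_j^{\rm new}\le s\,(g_{j-1}+g_{j+1})+e,
\]
with the current neighbouring deficits.

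Finally, bound $s=S_\kappa(2D/J)$. For $\kappa\le0$, \eqref{eq:mid-coefficient} gives $s\le1/2$. For $\kappa>0$, since $D\le L$ we get $s=[2\cos(\sqrt\kappa D/J)]^{-1}\le[2\cos(\sqrt\kappa L/J)]^{-1}$. In both cases $2s\cos(\pi/J)\le q_J$.

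\emph{Step 2 (one-step propagation).} Suppose $g_{j\pm1}\le M w_{j\pm1}$. Then Step 1 and the eigenvector identity give
\[
 g_j^{\rm new}\le 2s\cos(\pi/J)\,M w_j+e\le \bigl(q_J M+e/w_*\bigr)w_j,
\]
where we used $w_j\ge w_*$ to absorb $e$.

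\emph{Step 3 (handling the sweep order by induction).} This is the main obstacle. Within a sweep the replacements are made in the order $1,\ldots,J-1$, so the left neighbour is already updated while the right neighbour is not. A fixed-$M$ argument therefore needs a bound that is valid for both old and new values simultaneously. I would use the explicit sequence
\[
 A_B:=q_J^B\frac{C_h}{w_*}+\frac{e}{(1-q_J)w_*}.
\]
It has two useful properties: it satisfies $A_{B+1}=q_JA_B+e/w_*$, and it is nonincreasing in $B$ because $q_J<1$.

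\emph{Base case.} Since the initial $u_j$ are nonnegative, \eqref{eq:finite-target-bound} gives $g_j\le h_j\le C_h\le (C_h/w_*)w_j\le A_0w_j$.

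\emph{Inductive step.} Assume all deficits are at most $A_Bw_j$ before sweep $B+1$. During that sweep, induct on $j$. Each neighbour is either an old value, bounded by $A_Bw$, or a value already updated in this sweep, bounded by $A_{B+1}w\le A_Bw$. The boundary deficits are $0$. Step 2 with $M=A_B$ then yields $g_j^{\rm new}\le A_{B+1}w_j$. After the sweep this holds at every interior index, which closes the induction and proves \eqref{eq:finite-sweep-contraction}.
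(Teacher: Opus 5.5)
Your proposal is correct and follows essentially the same route as the paper. Both use the one-step recursion $g_j^{\rm new}\le s(g_{j-1}+g_{j+1})+e$, the sine eigenvector identity together with $2s\cos(\pi/J)\le q_J$, and a sweep-by-sweep induction in which a nonincreasing envelope handles the mixed old and updated neighbours. Your sequence $A_B=q_J^B C_h/w_*+e/((1-q_J)w_*)$ is exactly the paper's bound $F+E$, where $F=e/((1-q_J)w_*)$ and $E$ is replaced by $q_JE$ after each sweep.
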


\begin{proof}
For $\kappa\le0$, we have $0<q_J=\cos(\pi/J)<1$.
For $\kappa>0$, the strict comparison range gives
$0\le\sqrt\kappa L/J<\pi/J\le\pi/3$
and hence $0<q_J<1$.
The sine weights satisfy
\[
 w_{j-1}+w_{j+1}=2\cos(\pi/J)w_j.
\]
Using the formula for $S_\kappa$ and $D\le L$, we obtain
\[
 s(w_{j-1}+w_{j+1})
 =2s\cos(\pi/J)w_j
 \le q_Jw_j.
\]

Set $d_j=(h_j-u_j)_+ := \max\{h_{j} - u_{j},0\}$. 
Since the endpoints are fixed, $d_0=d_J=0$.
For each replacement, write
$d_j^{\rm new} = (h_{j} - u_{j}^{\rm new})_+$.
The recurrence \eqref{eq:finite-target-recurrence}
and the replacement inequality imply
\[
 d_j^{\rm new}\le s(d_{j-1}+d_{j+1})+e.
\]
Put $F=e/((1-q_J)w_*)$ and suppose that at the start of a sweep
$d_j\le(F+E)w_j$ for every interior index, where $E \ge 0$.
Whenever the current neighboring deficits satisfy this bound, 
the next replacement gives
\begin{align*}
 d_j^{\rm new}
 &\le s(F+E)(w_{j-1}+w_{j+1})+e\\
 &\le q_J(F+E)w_j+e\\
 &\le (F+q_JE)w_j,
\end{align*}
where we use $e=(1-q_J)Fw_*\le(1-q_J)Fw_j$.
Since $F+q_JE\le F+E$, 
the original bound remains valid throughout the sweep.
Each interior value is replaced once, so at the end
all deficits satisfy the improved bound 
$d_{j} \le (F+q_{J}E)w_{j}$.

Initially $u_j \ge 0$ and $h_j \le C_h$, 
so $d_j/w_j \le C_h/w_*$. 
Starting with $E_0=C_h/w_*$ 
and iterating the sweep estimate 
therefore gives
\[
 \frac{d_j}{w_j}
  \le F+q_J^BE_0
  =  \frac{e}{(1-q_J)w_*} + q_J^B\frac{C_h}{w_*}.
\]
This proves \eqref{eq:finite-sweep-contraction}.
\end{proof}

\begin{proof}[Proof of Theorem~\ref{thm:direct-midpoint}]
The implication $\SCD\Rightarrow\SCDmid$ 
is the $J=2$ case of the definition.
Assume $\SCDmid(K,N)$ 
and fix $J \ge 3$ and $\eps > 0$.
We first choose the number of sweeps, 
then the action tolerance,
and finally the accuracy of each midpoint replacement.

Since $0<q_J<1$, choose an integer $B\ge1$ such that
\[
 q_J^B\frac{C_h}{w_*}\le\frac{\eps}{2},
\]
and put $M=B(J-1)$ as the total number of midpoint replacements.
Choose $0<\bar\eta\le\eps$ so that
\[
 2C_SL\sqrt{\bar\eta}
 \le\frac14(1-q_J)w_*\eps.
\]
Finally, choose $\delta>0$ satisfying
\begin{equation}
 \frac{MJ\delta}{2}\le\frac{\bar\eta}{2},
 \qquad
 \delta\le\frac14(1-q_J)w_*\eps.
 \label{eq:finite-parameter-order}
\end{equation}

By Theorem~\ref{thm:admissible-existence}, 
there exists  $r_0=r_0(J,\eps)\ge R$ such that
\[
 \mathcal G_{r,J}^{\bar\eta/2}(\mu_0,\mu_J)\ne\varnothing
\]
for every $r\ge r_0$ and every pair
$\mu_0\in\Pp(V_r)$, $\mu_J\in\Pp(V_{r+J})$.
By $\SCDmid(K,N)$, we may increase $r_0$ so that,
for every $q\ge r_0$, each pair of endpoint measures
on $V_q,V_{q+2}$ admits a $\delta$-admissible midpoint
satisfying the entropy comparison in
Definition~\ref{def:midpoint} with tolerance $\delta$.

Fix $r\ge r_0$ and endpoints
$\mu_0\in\Pp(V_r)$, $\mu_J\in\Pp(V_{r+J})$.
Put $D=W(\mu_0,\mu_J)$ and let $h_j$ be the comparison
values in \eqref{eq:finite-entropy-target}.
Choose an initial section
\[
 \boldsymbol\mu^{(0)}
 \in\mathcal G_{r,J}^{\bar\eta/2}(\mu_0,\mu_J).
\]
Perform $B$ times of sweeps.
The choice of $r_0$ ensures that at each step of replacements,
there exists a $\delta$-admissible midpoint
of the current neighboring measures 
that satisfies the midpoint
entropy comparison with tolerance $\delta$.

Write $\boldsymbol\mu^{(k)}=(\mu_0^{(k)},\ldots,\mu_J^{(k)})$
for the section after $k$ individual replacements.
The action estimate in Lemma~\ref{lem:midpoint-replacement}
gives
\begin{align*}
\mathcal A_{r,J}(\boldsymbol\mu^{(k)})
  \le\mathcal A_{r,J}(\boldsymbol\mu^{(0)}) + \frac{kJ\delta L^2}{2}
  \le D^2+ \left(\frac{\bar\eta}{2}+\frac{MJ\delta}{2}\right)L^2
  \le D^2+\bar\eta L^2.
\end{align*}
Thus every intermediate section is $\bar\eta$-admissible.

The endpoints remain fixed, so $s=S_\kappa(2D/J)$
is unchanged throughout the sweeps.
The entropy estimate in Lemma~\ref{lem:midpoint-replacement}
therefore applies at every step with the same error bound
\[
 e=\delta+2C_SL\sqrt{\bar\eta}
 \le\frac12(1-q_J)w_*\eps.
\]
Since the initial transformed entropies are positive,
Lemma~\ref{lem:finite-entropy-contraction} yields
\[
 \max_{1\le j<J}
 \frac{\bigl(h_j-U_{r+j}(\mu_j^{(M)})\bigr)_+}{w_j}
 \le q_J^B\frac{C_h}{w_*}
       +\frac{e}{(1-q_J)w_*}
 \le\eps.
\]
As $w_j\le1$, it follows that
\[
 U_{r+j}(\mu_j^{(M)})\ge h_j-\eps,
 \qquad 1\le j<J.
\]
Moreover, $\bar\eta\le\eps$, so the same final section belongs to
$\mathcal G_{r,J}^{\eps}(\mu_0,\mu_J)$.
It therefore satisfies all the requirements of
Definition~\ref{def:scd}.
The starting level $r_0$ is independent of the endpoint measures,
which proves $\SCD(K,N)$.
\end{proof}

\begin{remark}[Scope of the equivalence]
\label{rem:direct-scope}
The argument applies to each fixed segment length $J$.
The starting level may depend on $J$ and the tolerance,
but is uniform over the endpoint measures.

The refinement geometry enters the proof through
Theorem~\ref{thm:admissible-existence}.
Under the same metric and entropy setup, with the stated
tail diameter bound and coefficient range,
the argument therefore applies whenever, for each $J\ge2$
and every $\eta>0$, the classes
$\mathcal G_{r,J}^{\eta}(\mu_0,\mu_J)$
are nonempty for all sufficiently large $r$,
uniformly over the endpoint measures.
\end{remark}

\section{Measured Gromov--Hausdorff convergence}
\label{sec:limits}

We now study measured Gromov--Hausdorff convergence
in the intrinsic bridge realization.
We show that any compact limit of a complete refinement
is a geodesic space, and give sufficient conditions for convergence.
These results provide the geometric setting
for the limit characterization of $\SCD(K,N)$ in the next section.

\subsection{Convergence and limit geometry}

For compact metric probability spaces, 
we use the measured
Gromov--Hausdorff topology induced by the
Gromov--Hausdorff--Prokhorov distance
\[
 d_{\mathrm{GHP}}\bigl((A,d_A,\alpha),(B,d_B,\beta)\bigr)
 =
 \inf_{E,\iota,\jmath}
 \max\left\{
 d_H^E\bigl(\iota(A),\jmath(B)\bigr),
 d_P^E\bigl(\iota_\#\alpha,\jmath_\#\beta\bigr)
 \right\}.
\]
Here the infimum is over isometric embeddings into a common
metric space $E$, and $d_H$ and $d_P$ denote the Hausdorff
and Prokhorov distances; see \cite[Section~2.2]{ADH2013}.

In our setting, the bridges already determine a common realization
$(Z,d_Z)$ by Lemma~\ref{lem:gluing}.
We formulate convergence in this space
so that the limit geometry and the SCD comparisons
use the same metric.

\begin{definition}[Convergence in the intrinsic bridge realization]
\label{def:convergence}
A comparison system $\Sigma$ has measured Gromov--Hausdorff
convergence in its intrinsic bridge realization if there exist
a nonempty compact set $M \subset Z$ 
and a probability measure $m$ on $Z$ such that
\begin{equation}
 a_{n} := d_{H}^{Z}(V_{n}, M) \longrightarrow 0,
 \qquad
 m_{n} \rightharpoonup m,
 \qquad
 \supp m = M.
 \label{eq:convergence}
\end{equation}
Weak convergence is taken in $Z$.
We write $d = d_Z |_{M\times M}$ and call $(M, d, m)$ the limit.
\end{definition}

\begin{remark}
Definition \ref{def:convergence} fixes the realization in which convergence occurs.
Abstract measured Gromov--Hausdorff convergence alone
does not ensure compatibility with the prescribed bridges.
\end{remark}

\begin{lemma}[Compactness of the intrinsic bridge realization]
\label{lem:compact}
Under \eqref{eq:convergence}, the space $Z$ is compact.
Moreover,
\[
 d_{\mathrm{GH}}\bigl((V_n,d_n),(M,d)\bigr)
 \le a_n\longrightarrow0,
 \qquad
 d_{\mathrm{GHP}}\bigl(X_n,(M,d,m)\bigr)\longrightarrow0.
\]
\end{lemma}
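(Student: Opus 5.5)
To prove Lemma~\ref{lem:compact}, I would first show that $Z$ is totally bounded, and hence compact, since it is complete by construction in Lemma~\ref{lem:gluing}.

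\textbf{Total boundedness of $Z$.} Fix $\eps>0$.
\begin{itemize}
\item Since $M$ is compact, choose a finite $\eps$-net $F\subset M$.
\item By \eqref{eq:convergence}, choose $n_0$ such that $a_n<\eps$ for all $n\ge n_0$.
\item Every point of $V_n$ with $n\ge n_0$ then lies within $\eps$ of $M$, hence within $2\eps$ of $F$.
\item Set $F'=F\cup\bigcup_{n<n_0}V_n$. This is a finite set, because every layer is finite.
\item Every point of $\bigcup_n V_n$ is therefore within $2\eps$ of $F'$.
\end{itemize}
By Lemma~\ref{lem:gluing}, $Z$ is the completion of the quotient of $\bigcup_n V_n$, so this union is dense in $Z$. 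Every point of $Z$ is thus within $3\eps$ of $F'$. Hence $Z$ is totally bounded, and being complete, it is compact.

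\textbf{The Gromov--Hausdorff bound.} By Lemma~\ref{lem:gluing}, each layer embeds isometrically in $Z$, and $M\subset Z$ carries $d=d_Z|_{M\times M}$. Taking $E=Z$ with the inclusion maps in the definition of $d_{\mathrm{GH}}$ gives
\[
 d_{\mathrm{GH}}\bigl((V_n,d_n),(M,d)\bigr)\le d_H^Z(V_n,M)=a_n .
\]

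\textbf{The GHP convergence.} With the same embedding into $E=Z$, we have
\[
 d_{\mathrm{GHP}}\bigl(X_n,(M,d,m)\bigr)\le\max\{a_n,\ d_P^Z(m_n,m)\}.
\]
It remains to show $d_P^Z(m_n,m)\to0$.
\begin{itemize}
\item The measures $m_n$ and $m$ are probability measures on $Z$. Each $m_n$ is supported on $V_n$, and $\supp m=M$.
\item On a separable metric space, the Prokhorov metric metrizes weak convergence of probability measures. $Z$ is compact, hence separable.
\item The hypothesis $m_n\rightharpoonup m$ in $Z$ therefore gives $d_P^Z(m_n,m)\to0$.
\end{itemize}

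\textbf{Main obstacle.} The only delicate point is the density step in the compactness argument. One must check that completion and the quotient by zero-distance points do not add points far from the layers. This holds because a completion contains the original space as a dense subset, and the quotient map does not increase distances.
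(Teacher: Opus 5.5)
Your proposal is correct and follows essentially the same route as the paper. A finite net of $M$ together with the finitely many early layers makes the union of layers, and hence its completion $Z$, totally bounded; the isometric inclusions into $Z$ then bound $d_{\mathrm{GH}}$ by $a_n$ and $d_{\mathrm{GHP}}$ by $\max\{a_n,d_P^Z(m_n,m)\}$, and the Prokhorov term tends to zero because the Prokhorov distance metrizes weak convergence on the compact space $Z$.
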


\begin{proof}
Fix $\eta>0$ and choose a finite $\eta/3$-net of $M$.
For all sufficiently large $n$, every point of $V_n$
lies within $\eta/3$ of $M$, and hence within $2\eta/3$
of this net.
The earlier layers form a finite set.
Adding their points gives a finite $\eta$-net
for the union of all layers.
This union is therefore totally bounded,
and its completion $Z$ is compact.

The isometric inclusions into $Z$ give
\[
 d_{\mathrm{GH}}\bigl((V_n,d_n),(M,d)\bigr)\le a_n.
\]
On compact $Z$, weak convergence of probability measures
is equivalent to convergence in Prokhorov distance.
Therefore we have
\[
 d_{\mathrm{GHP}}\bigl(X_n,(M,d,m)\bigr)
 \le \max\{a_n,d_P^Z(m_n,m)\}
 \longrightarrow 0.
\]
\end{proof}

For a complete refinement, the local geometric scales
also determine the geometry of the limit.
The shortest-path metrics provide approximate midpoints
whose errors vanish with $\ell_n$.

\begin{lemma}[Geodesicity of the limit]
\label{lem:geodesic-limit}
Let $\Sigma$ be a complete refinement system as in
Section~\ref{sec:refinement}, 
and suppose that \eqref{eq:convergence} holds.
Then $(M, d)$ is a geodesic space.
\end{lemma}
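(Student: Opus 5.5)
The plan is to show that $(M,d)$ has approximate midpoints with vanishing error. For compact metric spaces this implies geodesicity: compactness turns approximate midpoints into exact midpoints, and the standard dyadic construction then extends them to a geodesic. By Lemma~\ref{lem:compact}, $Z$ is compact. Since $M\subset Z$ is closed, $M$ is compact as well. It therefore suffices to prove the following claim: for any $x,z\in M$ there is $y\in M$ with $d(x,y)\le d(x,z)/2$ and $d(y,z)\le d(x,z)/2$.

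Fix $x,z\in M$ and $n$ large. By \eqref{eq:convergence}, choose $x_n,z_n\in V_n$ with $d_Z(x,x_n)\le a_n$ and $d_Z(z,z_n)\le a_n$. Since $V_n$ embeds isometrically in $Z$, we have $s:=d_n(x_n,z_n)=d_Z(x_n,z_n)$, and hence $|s-d(x,z)|\le 2a_n$. Next, take a shortest path from $x_n$ to $z_n$ in the graph of local connections of $V_n$, whose length defines $d_n$. Each connection on a shortest path realizes the distance between its endpoints. By \eqref{eq:local-resolution-scale}, each such distance is at most $s_n\le\ell_n$. This is exactly the argument used in the proof of Theorem~\ref{thm:admissible-existence}. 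Now let $y_n$ be a vertex on the path nearest to arclength position $s/2$. Then
\[
 d_n(x_n,y_n)\le s/2+\ell_n/2,\qquad d_n(y_n,z_n)\le s/2+\ell_n/2.
\]
Since $d_H^Z(V_n,M)\le a_n$, pick $y'_n\in M$ with $d_Z(y_n,y'_n)\le a_n$. The triangle inequality then gives
\[
 d(x,y'_n),\ d(y'_n,z)\le \tfrac12 d(x,z)+2a_n+\ell_n/2 .
\]

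By compactness of $M$, a subsequence $y'_n\to y\in M$. Since $a_n\to0$ and $\ell_n\to0$ (complete refinement), we obtain $d(x,y),d(y,z)\le d(x,z)/2$. The triangle inequality forces equality, so $y$ is an exact midpoint. Finally, we iterate midpoints on dyadic parameters $k2^{-m}$ to get a $1$-Lipschitz map on the dyadics, which extends by completeness of $M$. The resulting curve is a geodesic from $x$ to $z$.

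The main obstacle is making sure the discrete path really has steps of size at most $\ell_n$. This requires that $d_n$ be the shortest-path metric of the local connections (Definition~\ref{def:local-rule}(3)) and that connection lengths be bounded via the adjacency term in $s_n$. The subtle point is that a connection's prescribed length may exceed the distance $d_n$ between its endpoints. The remedy is the observation used in Theorem~\ref{thm:admissible-existence}: on a shortest path, each edge's length equals $d_n$ of its endpoints, and is therefore at most $\ell_n$. No bridge structure beyond the isometric embedding of each layer is needed.
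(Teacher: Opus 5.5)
Your proposal is correct and follows essentially the same route as the paper: a shortest path in the layer graph with steps at most $\ell_n$, a vertex near the arclength midpoint, compactness to obtain an exact midpoint in $M$, and the dyadic construction extended by completeness. The differences are cosmetic. You project $y_n$ to a point of $M$ before extracting a limit, whereas the paper takes the limit in $Z$ and then notes it lies in $M$. Also, your bound should read $\tfrac12 d(x,z)+3a_n+\ell_n/2$ rather than $2a_n$, since the estimate $s/2\le d(x,z)/2+a_n$ contributes one more $a_n$; this slip is harmless because $a_n\to0$.
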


\begin{proof}
Fix $x, y\in M$ and choose $x_n, y_n\in V_n$
such that $x_n \to x$ and $y_n \to y$ in $Z$.
Join $x_n$ to $y_n$ by a shortest path
in the local connection of $V_n$.

Every subpath is shortest.
In particular, the length of each connection on this path
equals the $d_n$-distance between its endpoints,
and is therefore at most $\ell_n$
by \eqref{eq:local-resolution-scale}.
Choose a vertex $z_n$ closest to the arc length midpoint.
If $x_n = y_n$, take $z_n = x_n$.
Then
\[
 \left|d_n(x_n,z_n)-\frac12d_n(x_n,y_n)\right|
 \le\frac{\ell_n}{2},
 \qquad
 \left|d_n(z_n,y_n)-\frac12d_n(x_n,y_n)\right|
 \le\frac{\ell_n}{2}.
\]

By Lemma~\ref{lem:compact}, a subsequence of $(z_n)$
converges in $Z$ to some $z$.
Since $d_Z(z_n, M) \le a_n \to 0$, we have $z \in M$.
The layers embed isometrically in $Z$, and $\ell_n \to 0$,
so passage to the limit gives
\[
 d(x, z) = d(z, y) = \frac12 d(x, y).
\]
Thus every pair of points in $M$ has a midpoint.

Choose midpoints recursively to define $\gamma$
at the dyadic points of $[0,1]$, 
with
$\gamma(0)=x$ and $\gamma(1)=y$.
At each depth, the sum of the consecutive distances
equals $d(x, y)$.
The triangle inequality therefore gives
\[
 d(\gamma(s),\gamma(t))=|s-t|d(x,y)
\]
for all dyadic $s$ and $t$.
Completeness of $M$ extends $\gamma$ to $[0,1]$
with the same distance identity.
Hence $\gamma$ is a constant-speed geodesic.
\end{proof}

The geodesicity conclusion follows 
from complete refinement and Hausdorff convergence.
Furthermore, Hausdorff convergence gives $M \subset Z_{\ge R}$.
Thus $\diam M\le L$, with $R,L$ and the comparison range
retained from Section~\ref{sec:entropy}.

\subsection{Endpoint control and compact measured limits}

A reference path over the refinement segment $X_n,\ldots,X_r$
is a sequence $(x_n,\ldots,x_r)$ with $x_k\in V_k$ such that
for any $n \le k< r$, we have
\[
 \gamma_k(x_k,x_{k+1})>0.
\]
For sufficiently large $n$, the displacement estimate
\eqref{eq:reference-displacement} gives
\[
 d_Z(x_n,x_r)\le C\sum_{k=n}^{r-1}\ell_k
\]
along every supported reference path.
For each fixed $r-n$, the right-hand side tends to zero.
To obtain a sufficient condition for convergence,
we now control the endpoint distance uniformly over all $r>n$.
\begin{definition}[Endpoint control]
\label{def:controlled}
A comparison system is endpoint-controlled if there are
nonnegative numbers $\omega_{n}\rightarrow 0$ such that
\[
 d_Z(x_n,x_r)\le \omega_{n}
\]
for every reference path $x_n, \ldots, x_r$.
\end{definition}

This estimate is uniform over the initial element,
the terminal layer, and the intermediate elements.
Moreover, it is equivalent to the same
estimate with $D_{[n,r]}$ in place of $d_Z$
by Lemma~\ref{lem:gluing}.
The stronger path-length bound
\begin{equation}
 \sum_{k=n}^{r-1}b_k(x_k,x_{k+1})\le \omega_{n}
 \label{eq:path-length-sufficient}
\end{equation}
implies endpoint control.

For both the endpoint and path-length estimates,
it suffices to consider all sufficiently large starting levels.
Indeed, suppose the estimate holds from level $n_*$ onward, set
\[
 B_k=\max_{\gamma_k(x,y)>0}b_k(x,y),
 \qquad
 \omega_{n}=\omega_{n_*}+\sum_{k=n}^{n_*-1}B_k
 \quad(n<n_*).
\]
Each $B_k$ is finite.
The sum bounds the part of a reference path
before level $n_*$.
Thus the extended sequence gives the required estimate
at every starting layer.
Moreover, $\omega_{n}-\omega_{n+1}=B_n\ge0$ for $n<n_*$,
and the sequence tends to zero.

Endpoint control yields a compact measured limit
and couplings to the limit with uniformly bounded displacement.

\begin{theorem}[Compact limit from endpoint control]
\label{thm:compact-limit}
Let $\Sigma$ be an endpoint-controlled comparison system.
Its intrinsic bridge realization $(Z,d_Z)$ is compact.
There exist a nonempty compact set $M\subset Z$
and a probability measure $m$ with $\supp m=M$ such that
\begin{equation}
 d_H^Z(V_n,M)\le \omega_{n},
 \qquad
 W_{\infty,Z}(m_n,m)\le \omega_{n}.
 \label{eq:controlled-limit}
\end{equation}
In particular, \eqref{eq:convergence} holds.
For each $n$, there is a coupling
$\beta_n\in\Pi(m_n,m)$ supported on
\[
 \{(x,z)\in V_n\times M:d_Z(x,z)\le \omega_{n}\}.
\]
Furthermore, for every integer $R\ge0$,
\begin{equation}
 \diam Z_{\ge R}\le\diam V_R+2\omega_{R}.
 \label{eq:diameter-bound}
\end{equation}
\end{theorem}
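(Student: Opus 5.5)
The plan is to push each layer forward to later layers along the reference kernels and to obtain $m$ as the limit of the resulting measures in $W_{\infty,Z}$. Endpoint control makes these pushforwards uniformly close to $m_n$.

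\emph{Compactness of $Z$.} Fix $\eta>0$ and choose $n$ with $\omega_n<\eta$. Every $y\in V_r$, $r>n$, can be traced back along the supports of $\gamma_{r-1},\dots,\gamma_n$ to some $x\in V_n$. This works because the reference marginals have full support, and the backward chain is a reference path, so $d_Z(x,y)\le\omega_n<\eta$. Hence $V_0\cup\dots\cup V_n$ is a finite $\eta$-net of the union of all layers. That union is therefore totally bounded and its completion $Z$ is compact.

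\emph{Construction of $m$.} For $n<r$, compose the reference couplings: the kernel $P_n\cdots P_{r-1}$ gives a coupling $\pi_{n,r}\in\Pi(m_n,m_r)$. It is supported on endpoints of reference paths, so $W_{\infty,Z}(m_n,m_r)\le\omega_n$. Thus $(m_n)$ is Cauchy in $W_{\infty,Z}$ on the compact space $Z$. I would extract a weak limit $m$ via Prokhorov. Then lower semicontinuity of $W_\infty$ under weak convergence gives $W_{\infty,Z}(m_n,m)\le\omega_n$; concretely, limits of couplings supported on the closed set $\{d_Z\le\omega_n\}$ stay supported there. The same compactness argument yields an optimal coupling $\beta_n\in\Pi(m_n,m)$ supported on $\{(x,z):d_Z(x,z)\le\omega_n\}$. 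Since $W_\infty$ controls $W_1$, the whole sequence converges weakly to $m$.

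\emph{Hausdorff estimate and $M=\supp m$.} Set $M=\supp m$, which is nonempty and compact. Two directions are needed.
\begin{itemize}
\item Every $z\in M$ is within $\omega_n$ of $V_n$. This follows from $\beta_n$: the second marginal is $m$, and the support of $\beta_n$ is closed, so each $z\in M$ is paired with some $x\in V_n$ satisfying $d_Z(x,z)\le\omega_n$.
\item Every $x\in V_n$ is within $\omega_n$ of $M$. Since $m_n(x)>0$, the coupling $\beta_n$ gives $x$ positive mass paired with points $z\in M$ at distance $\le\omega_n$.
\end{itemize}
Hence $d_H^Z(V_n,M)\le\omega_n\to0$, and \eqref{eq:convergence} holds.

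\emph{Diameter bound.} Take $p,q\in V_{r}\cup V_{r'}$ with $r,r'\ge R$, and trace each back along a reference path to $p',q'\in V_R$. Then
\[
 d_Z(p,q)\le d_R(p',q')+2\omega_R\le\diam V_R+2\omega_R .
\]
The bound passes to the closure $Z_{\ge R}$, which gives \eqref{eq:diameter-bound}.

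The main delicate step is the $W_\infty$ limit. It needs the closedness of the displacement constraint under weak convergence of couplings, and the correct identification of $\supp m$ so that both Hausdorff inclusions hold. Full support of $m_n$ is what gives the inclusion of $V_n$ near $M$.
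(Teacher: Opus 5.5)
Your proof is correct, but it produces $m$ and $M$ by a different route from the paper's.

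\emph{The paper's route.} It runs the reference Markov chain $(Y_n)$ with kernels $P_n$. Endpoint control then makes almost every trajectory Cauchy in $Z$, with $d_Z(Y_n,Y_\infty)\le\omega_n$. The paper sets $m=\operatorname{Law}(Y_\infty)$ and $\beta_n=\operatorname{Law}(Y_n,Y_\infty)$. Independently, it defines $M$ as the Hausdorff limit of the layers, using $d_H^Z(V_n,V_r)\le\omega_n$. It then needs a separate argument to prove $\supp m=M$: on the event $\{Y_n=x\}$, which has mass $m_n(x)>0$, the limit point lands in a small ball around $z$.

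\emph{Your route.} You compose the kernels to get $W_{\infty,Z}(m_n,m_r)\le\omega_n$, take a weak limit by compactness, and pass the closed constraint $\{d_Z\le\omega_n\}$ to a limit coupling. Setting $M:=\supp m$ makes the support identity hold by definition. Both Hausdorff inclusions then follow from $\beta_n$ together with the full support of $m_n$.

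\emph{What each buys.} Your version is softer and avoids constructing a measure on path space. The paper's gives explicit couplings, all realized by a single process and hence consistent with the reference kernels. Yours are obtained by subsequence extraction.

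One step deserves an explicit justification. A point of $\supp m$ is paired with some $x\in V_n$ because the projection of $\supp\beta_n$ to $Z$ is the finite union $\bigcup_{x\in V_n}\supp\beta_n(\{x\},\cdot)$. This union is closed and has full $m$-measure, so it contains $\supp m$.
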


\begin{proof}
We first construct the limiting compact set,
then obtain its reference measure from the reference paths.

The positive reference masses and marginal identities ensure
that every element of $V_n$ can be continued along a reference
path to any later layer.
Conversely, every element of a later layer can be reached
by a reference path starting in $V_n$.
Endpoint control therefore gives
\[
 d_H^Z(V_n,V_r)\le \omega_{n}
 \qquad(r>n).
\]

For any $\eta>0$, choose $n$ with $\omega_{n}<\eta$.
The finite set $V_n$ covers every later layer
by balls of radius $\eta$.
Adding the finitely many earlier layers gives a finite
$\eta$-net for the union of all layers.
This union is totally bounded,
so its completion $Z$ is compact.

The displayed Hausdorff estimate also shows that $\{ V_n \}_{n}$
is Cauchy in Hausdorff distance.
It therefore converges to a nonempty compact set $M\subset Z$.
Letting $r\to\infty$ gives
\[
 d_H^Z(V_n,M)\le \omega_{n}.
\]

To construct the reference measure,
consider the Markov chain $\{ Y_n \}$ with initial law $m_0$
and transition kernels $P_n$ from
\eqref{eq:reference-kernels}.
The marginal identities give
$\operatorname{Law}(Y_n)=m_n$ for every $n$.
Almost surely, every transition has positive reference mass.
Hence endpoint control gives, simultaneously for all $r>n$,
\[
 d_Z(Y_n,Y_r)\le \omega_{n}.
\]
Thus $\{ Y_n \}$ is almost surely Cauchy and converges in $Z$
to a random point $Y_\infty$.
Since
\[
 d_Z(Y_n,M)\le \omega_{n}\longrightarrow0,
\]
the limit belongs to $M$ almost surely.
Passing to the limit in the endpoint estimate also gives
\[
 d_Z(Y_n,Y_\infty)\le \omega_{n}
 \qquad\text{almost surely}.
\]

Define
\[
 m=\operatorname{Law}(Y_\infty),
 \qquad
 \beta_n=\operatorname{Law}(Y_n,Y_\infty).
\]
Then $\beta_n\in\Pi(m_n,m)$ has the asserted support,
which proves the $W_\infty$ estimate.
Almost sure convergence yields weak convergence 
$m_n\rightharpoonup m$.

It remains to prove that $m$ has full support on $M$.
Fix $z\in M$ and $a>0$.
Choose $n$ with $\omega_{n}<a/4$ and $x\in V_n$ such that
$d_Z(x,z)\le \omega_{n}$.
The event $\{Y_n=x\}$ has probability $m_n(x)>0$.
On this event,
\[
 d_Z(Y_\infty,z)
 \le d_Z(Y_\infty,x)+d_Z(x,z)
 \le2\omega_{n}<a
\]
almost surely.
Consequently,
\[
 m(B_a(z))\ge m_n(x)>0.
\]
Since $m$ is concentrated on $M$, 
we have $\supp m=M$.

Finally, fix $R\ge0$.
Every element of a layer with index at least $R$
can be joined to an element of $V_R$
by a reference path.
Its distance from that element is at most $\omega_{R}$.
Comparing two such elements through their corresponding
points in $V_R$ gives
\[
 \diam\left(\bigcup_{n\ge R}V_n\right)
 \le\diam V_R+2\omega_{R}.
\]
Taking the closure proves \eqref{eq:diameter-bound}.
\end{proof}

\begin{remark}
If $\Sigma$ is a complete refinement,
Lemma~\ref{lem:geodesic-limit} further shows that
the resulting limit $(M,d,m)$ is geodesic.
\end{remark}

\subsection{Local criteria for endpoint control}

We now give local criteria for the endpoint control
used in Theorem~\ref{thm:compact-limit}.
These criteria either bound the accumulated displacement
along reference paths or confine them near retained anchors.
We first combine the displacement estimate of
Section~\ref{sec:refinement} with uniform scale contraction.

\begin{proposition}[Uniform scale contraction]
\label{prop:local-control}
Suppose a refinement system satisfies
\eqref{eq:reference-displacement} and, for some $0<q<1$,
\[
 \ell_{n+1}\le q\ell_n
\]
for all sufficiently large $n$.
Then the system is endpoint-controlled, with
\[
 \omega_{n}=\frac{C\ell_n}{1-q}.
\]
\end{proposition}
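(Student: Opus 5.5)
We plan to verify the path-length bound \eqref{eq:path-length-sufficient}, which implies endpoint control by the remark following Definition~\ref{def:controlled}. By that same remark, it suffices to prove the estimate from some fixed level $n_*$ onward; the extension to earlier levels then comes automatically with the sequence $\omega_n$ defined there.

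Choose $n_*$ so that \eqref{eq:reference-displacement} holds for every $n\ge n_*$ and so that $\ell_{n+1}\le q\ell_n$ for every $n\ge n_*$. Let $n\ge n_*$ and let $x_n,\ldots,x_r$ be a reference path. Each step has $\gamma_k(x_k,x_{k+1})>0$, so \eqref{eq:reference-displacement} gives $b_k(x_k,x_{k+1})\le C\ell_k$. Iterating the contraction gives $\ell_k\le q^{k-n}\ell_n$ for $k\ge n$. Summing,
\[
 \sum_{k=n}^{r-1}b_k(x_k,x_{k+1})
 \le C\ell_n\sum_{k=n}^{r-1}q^{k-n}
 \le\frac{C\ell_n}{1-q}.
\]
Because the bridges embed isometrically in $Z$ (Lemma~\ref{lem:gluing}), the triangle inequality then gives $d_Z(x_n,x_r)\le C\ell_n/(1-q)=:\omega_n$. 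Since $\ell_n\le q^{n-n_*}\ell_{n_*}$, we have $\omega_n\to0$.

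No step here is a real obstacle. The only care needed is the quantifier "for all sufficiently large $n$" in both hypotheses. We handle it by taking $n_*$ to be the later of the two thresholds. Below $n_*$ the stated formula $\omega_n=C\ell_n/(1-q)$ may fail, so there we use the extended sequence from the remark after Definition~\ref{def:controlled}. Equivalently, we read the claim as holding for all sufficiently large $n$, which is all that endpoint control requires.
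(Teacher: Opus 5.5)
Your proposal is correct and follows the paper's proof essentially verbatim. You bound each step by $C\ell_k$ via \eqref{eq:reference-displacement}, sum the geometric series to obtain the path-length bound \eqref{eq:path-length-sufficient} with $\omega_n=C\ell_n/(1-q)$ on a tail, and handle the finitely many earlier layers by the extension described after Definition~\ref{def:controlled}.
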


\begin{proof}
For every sufficiently large $n$, a supported reference path
satisfies
\[
 \sum_{k=n}^{r-1}b_k(x_k,x_{k+1})
 \le C\sum_{k=n}^{r-1}\ell_k
 \le C\ell_n\sum_{j=0}^{\infty}q^j
 =\frac{C\ell_n}{1-q}.
\]
This bound decreases to zero.
The preceding extension includes the finitely many earlier layers.
\end{proof}

Uniform contraction of the maximal scale is not needed
when each displacement can be bounded by the corresponding
decrease in local geometric scale.

\begin{proposition}[Local displacement budget]
\label{prop:local-budget}
Suppose a complete refinement satisfies, for some $C_0>0$
and all sufficiently large $n$,
\begin{equation}
 b_n(x,y)\le C_0\bigl(s_n(x)-s_{n+1}(y)\bigr)
 \quad\text{whenever }\gamma_n(x,y)>0.
 \label{eq:local-budget}
\end{equation}
Then \eqref{eq:path-length-sufficient} holds
with $\omega_{n}=C_0\ell_n$, and the system is endpoint-controlled.
\end{proposition}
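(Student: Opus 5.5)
The argument telescopes the local budget along a reference path and then invokes the endpoint-control machinery already set up. Fix $n_*$ such that \eqref{eq:local-budget} holds for every $k\ge n_*$. Take $n\ge n_*$, $r>n$, and a reference path $x_n,\ldots,x_r$, so that $\gamma_k(x_k,x_{k+1})>0$ for each $n\le k<r$. Every step of this path meets the hypothesis of \eqref{eq:local-budget}. Summing over $k$ gives
\[
 \sum_{k=n}^{r-1}b_k(x_k,x_{k+1})
 \le C_0\sum_{k=n}^{r-1}\bigl(s_k(x_k)-s_{k+1}(x_{k+1})\bigr)
 =C_0\bigl(s_n(x_n)-s_r(x_r)\bigr).
\]

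To bound the right-hand side, note that $s_r(x_r)\ge0$ by Definition~\ref{def:resolution} and $s_n(x_n)\le\ell_n$ by \eqref{eq:complete-resolution}. Hence the sum is at most $C_0\ell_n$. This is the path-length bound \eqref{eq:path-length-sufficient} with $\omega_n=C_0\ell_n$, valid for every starting level $n\ge n_*$.

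It remains to pass to endpoint control. Lemma~\ref{lem:gluing} shows that each bridge embeds isometrically in $Z$, so the triangle inequality in $Z$ gives $d_Z(x_n,x_r)\le\sum_k b_k(x_k,x_{k+1})\le C_0\ell_n$. For the finitely many levels $n<n_*$, I would use the extension described after Definition~\ref{def:controlled}: set $\omega_n=\omega_{n_*}+\sum_{k=n}^{n_*-1}B_k$. Completeness of the refinement gives $\ell_n\to0$, so $\omega_n\to0$ and the system is endpoint-controlled.

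I expect no real obstacle. The one subtle point is the formula $\omega_n=C_0\ell_n$: it holds only for $n\ge n_*$, and for smaller $n$ it must be replaced by the extension above. The paper's preceding remark states that it suffices to verify the estimate from some level onward, which covers this. The hypothesis itself forces $s_{k+1}(x_{k+1})\le s_k(x_k)$ along supported pairs, so the telescoping terms are nonnegative, though the argument does not need this.
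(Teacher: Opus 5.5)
Your proposal is correct and matches the paper's proof: telescope the budget along a supported reference path to obtain $C_0\bigl(s_n(x_n)-s_r(x_r)\bigr)\le C_0\ell_n$, use $\ell_n\to0$, and extend the bound across the finitely many earlier layers. Your remark that $\omega_n=C_0\ell_n$ holds only from the threshold level onward is the point the paper covers with its closing sentence about extending the bound across the earlier layers.
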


\begin{proof}
Summing along a supported reference path gives
\[
 \sum_{k=n}^{r-1}b_k(x_k,x_{k+1})
 \le C_0\bigl(s_n(x_n)-s_r(x_r)\bigr)
 \le C_0\ell_n.
\]
Complete refinement gives $\ell_n \rightarrow 0$.
Extending the bound across the earlier layers proves the claim.
\end{proof}

The budget must hold for every pair exchanging positive
reference mass.
It is stronger than the resolution condition
\eqref{eq:local-resolution}, which compares the new scale
with the maximum over its geometric dependency set.

For example, consider uniform interval bisection.
Take interval lengths as element sizes,
midpoint distances as local connection lengths,
and Euclidean midpoint distances as bridges.
Transfer the reference mass of each interval to its two halves.
Then
\[
 s_n(x)=\ell_n,\qquad
 s_{n+1}(y)=\frac{\ell_n}{2},\qquad
 b_n(x,y)=\frac{\ell_n}{4}
\]
on every supported transition.
Thus \eqref{eq:local-budget} holds with $C_0=1/2$.

The same budget also applies to adaptive refinement,
where some reference transitions leave the geometry unchanged.

\begin{corollary}[Active and inactive reference transitions]
\label{cor:adaptive-control}
Suppose every supported reference transition is either
inactive, with
\[
 b_n(x,y)=0,\qquad s_{n+1}(y)=s_n(x),
\]
or active, with fixed constants $0<q<1$ and $A>0$ such that
\[
 s_{n+1}(y)\le q s_n(x),\qquad
 b_n(x,y)\le A s_n(x).
\]
If every infinite supported reference path has infinitely many
active transitions, then $\ell_n\to0$ and the system is
endpoint-controlled with
\[
 \omega_{n}=\frac{A\ell_n}{1-q}.
\]
\end{corollary}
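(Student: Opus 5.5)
The plan is to reduce the statement to Proposition~\ref{prop:local-budget}, or to argue directly along reference paths. We bound the total bridge length of every supported reference path by a geometric series in its starting scale. Throughout we restrict to $n$ large enough that the geometric hypotheses hold; the earlier layers are handled by the extension argument following Definition~\ref{def:controlled}.

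\emph{Step 1: $\ell_n\to0$.} By the local refinement condition, $\ell_n$ is nonincreasing, so it has a limit $\ell_\infty\ge0$. Suppose $\ell_\infty>0$. I would build an infinite supported reference path along which $s_k(x_k)$ stays bounded below by a fixed positive constant. The construction uses a K\H{o}nig-type argument.

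\begin{itemize}
\item Say an element $x\in V_n$ is \emph{heavy} if some supported continuation from $x$ reaches, at every later level $r$, elements of scale $\ge\ell_\infty$.
\item Since each layer is finite and every element has a supported successor (full support of the marginals), the heavy elements form a finitely branching tree, which is nonempty at every level. K\H{o}nig's lemma then gives an infinite path.
\end{itemize}

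Along an active transition the scale drops by the factor $q$, and along an inactive one it is unchanged. The path has infinitely many active transitions, so $s_k(x_k)\to0$. This is the contradiction I am aiming for.

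The delicate point, which I expect to be the main obstacle, is to get a single path along which \emph{every} scale is large. Maxima at different levels may be realized by different ancestries, so a naive choice does not work. I would try the following: call $x\in V_n$ \emph{large} if $s_n(x)\ge\ell_\infty$. Backward continuation is available, and scales never increase along supported transitions: inactive steps keep the scale and active steps shrink it. Hence any supported ancestor of a large element is large. For each $r$, choose a large element in $V_r$ and trace it backwards to $V_0$. Each such backward chain consists of large elements. A diagonal/K\H{o}nig extraction over $r\to\infty$ then gives an infinite forward supported path of large elements. That is incompatible with infinitely many active steps, since scales on active steps multiply by $q<1$. Hence $\ell_\infty=0$.

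\emph{Step 2: path-length bound.} Fix a supported reference path $x_n,\dots,x_r$ and let $k_1<k_2<\cdots$ be its active indices. Inactive steps contribute zero bridge length and leave the scale unchanged. Therefore the scale immediately before the $i$-th active step satisfies $s_{k_i}(x_{k_i})\le q^{i-1}s_n(x_n)\le q^{i-1}\ell_n$. Summing,
\[
 \sum_{k=n}^{r-1}b_k(x_k,x_{k+1})\le\sum_{i\ge1}A\,q^{i-1}\ell_n=\frac{A\ell_n}{1-q}.
\]
This is \eqref{eq:path-length-sufficient} with $\omega_n=A\ell_n/(1-q)$, and $\omega_n\to0$ by Step 1. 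Endpoint control follows, using the extension to early layers discussed after Definition~\ref{def:controlled}. Note that Step 2 uses no hypothesis on infinite paths; that hypothesis is needed only to force $\ell_n\to0$.
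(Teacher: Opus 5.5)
Your proof is correct and has the same two-step structure as the paper's. A K\H{o}nig-type compactness argument on supported reference paths forces $\ell_n\to0$, and a geometric-series bound along each path gives \eqref{eq:path-length-sufficient} with $\omega_n=A\ell_n/(1-q)$. The differences are minor. The paper applies K\H{o}nig's lemma to paths with at most $B$ active transitions, which yields the quantitative bound $\ell_n\le\ell_0q^{k_n}$ with $k_n\to\infty$. For Step~2 it rewrites the active bound as the budget $b_n(x,y)\le\frac{A}{1-q}\bigl(s_n(x)-s_{n+1}(y)\bigr)$ and invokes Proposition~\ref{prop:local-budget}. You instead run K\H{o}nig on the subtree of elements with scale at least $\ell_\infty$ and sum the series directly. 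Your backward-monotonicity argument makes the preliminary ``heavy element'' bullet unnecessary.
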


\begin{proof}
Let $k_n$ be the minimum number of active transitions
in a supported reference path from layer $0$ to layer $n$.
We claim that $k_n\to\infty$.
Otherwise, some integer $B$ would bound this number
along paths of arbitrarily large length.

The tree of supported paths with at most $B$ active transitions
is finitely branching and has arbitrary depth.
Since $V_0$ is finite, one initial element has extensions
of arbitrary length.
Repeatedly choosing a successor with this property
produces an infinite supported path with at most $B$
active transitions, a contradiction.

The positive reference masses and marginal identities ensure
that every layer element is reached by a supported path
starting in $V_0$.
Consequently,
\[
 \ell_n\le\ell_0q^{k_n}\longrightarrow0.
\]
For an active transition,
\[
 b_n(x,y)
 \le\frac{A}{1-q}
       \bigl(s_n(x)-s_{n+1}(y)\bigr).
\]
For an inactive transition, both sides vanish.
Proposition~\ref{prop:local-budget} therefore applies.
\end{proof}

The preceding criteria control the total length of reference paths.
We now obtain endpoint control by keeping later elements
near retained anchors.
Successive anchors have zero bridge distance
and therefore represent the same point of $Z$.

\begin{proposition}[Confinement by retained anchors]
\label{prop:anchor-control}
Let $\Sigma$ be a complete refinement system.
Suppose there is a constant $C > 0$ with the following property.
Every $x\in V_n$ carries a mark $\tau=\tau(n,x)$
with radius $R_\tau$ and anchor $a_n^\tau\in V_n$ such that
\[
 d_n(x, a_n^\tau)\le R_\tau\le C\ell_n.
\]
For every subsequent refinement step $k\ge n$:
\begin{enumerate}
\item the anchor has a successor $a_{k+1}^\tau\in V_{k+1}$
      satisfying
      \[
       b_k(a_k^\tau,a_{k+1}^\tau)=0;
      \]
\item whenever $u\in V_k$ carries $\tau$ and
      $\gamma_k(u,v)>0$, the element $v\in V_{k+1}$
      inherits $\tau$ and satisfies
      \[
       d_{k+1}(v,a_{k+1}^\tau)\le R_\tau.
      \]
\end{enumerate}
Then $\Sigma$ is endpoint-controlled with $\omega_{n}=2C\ell_n$.
\end{proposition}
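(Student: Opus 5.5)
The plan is to follow a reference path $x_n,\ldots,x_r$ and compare every element with the anchor of the mark carried by its starting element. Let $\tau=\tau(n,x_n)$ be the mark of the initial element. By hypothesis, $d_n(x_n,a_n^\tau)\le R_\tau\le C\ell_n$.

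First, an induction on $k\ge n$ along the reference path. Suppose $x_k$ carries $\tau$. Since $\gamma_k(x_k,x_{k+1})>0$, condition (2) shows that $x_{k+1}$ inherits $\tau$ and satisfies $d_{k+1}(x_{k+1},a_{k+1}^\tau)\le R_\tau$. Hence every $x_k$ on the path carries $\tau$ and lies within $R_\tau$ of $a_k^\tau$ in $d_k$.

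Second, the anchors all represent one point of $Z$. Each bridge embeds isometrically in $Z$ by Lemma~\ref{lem:gluing}. Condition (1) gives $d_Z(a_k^\tau,a_{k+1}^\tau)=b_k(a_k^\tau,a_{k+1}^\tau)=0$, so by the triangle inequality $d_Z(a_n^\tau,a_r^\tau)=0$ for all $r\ge n$. The layers also embed isometrically, so $d_Z$ agrees with $d_k$ on $V_k$.

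Combining these,
\[
 d_Z(x_n,x_r)\le d_n(x_n,a_n^\tau)+d_Z(a_n^\tau,a_r^\tau)+d_r(x_r,a_r^\tau)\le 2R_\tau\le 2C\ell_n .
\]
This bound is uniform over the initial element, the terminal layer and the intermediate elements. Complete refinement gives $\ell_n\to0$, so $\omega_n=2C\ell_n$ satisfies Definition~\ref{def:controlled}.

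The main subtlety is the quantifiers. The radius $R_\tau$ is fixed at level $n$ by $R_\tau\le C\ell_n$, so it does not shrink along the path, and the bound must be built from $R_\tau$ itself rather than from later scales $\ell_k$. A mark may also be reassigned at later levels, but for the estimate starting at $n$ we only use the mark $\tau(n,x_n)$ and its inheritance; this is consistent with the hypothesis, which is stated for every subsequent step $k\ge n$. If the hypothesis holds only for sufficiently large $n$, the extension argument following Definition~\ref{def:controlled} handles the early layers, although it then yields a modified $\omega_n$ there.
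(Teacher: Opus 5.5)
Your proposal is correct and follows essentially the same argument as the paper: inherit the mark $\tau(n,x_n)$ along the reference path, use the zero bridge distances and Lemma~\ref{lem:gluing} to get $d_Z(a_n^\tau,a_r^\tau)=0$, and conclude $d_Z(x_n,x_r)\le 2R_\tau\le 2C\ell_n$ by the triangle inequality. Your explicit induction on the inherited mark and your remark on quantifiers only spell out steps the paper leaves implicit.
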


\begin{proof}
Fix a reference path $x_n, \ldots, x_r$
and let $\tau=\tau(n, x_n)$.
The mark is inherited along the path, thus
\[
 d_r(x_r,a_r^\tau)\le R_\tau.
\]
By Lemma~\ref{lem:gluing},
the zero bridge distances between successive anchors give
\[
 d_Z(a_n^\tau,a_r^\tau)=0.
\]
The triangle inequality therefore yields
\[
 \begin{aligned}
 d_Z(x_n,x_r)
 \le d_n(x_n,a_n^\tau)+d_r(a_r^\tau,x_r)
 \le 2R_\tau
 \le 2C\ell_n.
 \end{aligned}
\]
Since $\ell_n \rightarrow 0$, this proves endpoint control.
\end{proof}

\begin{remark}
An element receiving reference mass from several marked elements
inherits all their marks.
The refinement rule must ensure that these localization
constraints are compatible.
The anchors are retained isometrically,
while the remaining elements may be replaced,
so the layers need not be nested.
Moreover, the anchor chain need not be a reference path.
It provides a route for estimating the endpoint distance,
without imposing a bound on the length of the given reference path.
\end{remark}

Each of these criteria yields a compact measured limit
by Theorem~\ref{thm:compact-limit}.
For complete refinements, the limit is also geodesic
by Lemma~\ref{lem:geodesic-limit}.
These criteria provide sufficient conditions for
\eqref{eq:convergence}, under which the limit characterization
in the next section will be formulated.

\section{Limit characterization}
\label{sec:characterization}

This section compares the SCD condition with the entropic
curvature--dimension condition on the limit space, 
as formulated by
Erbar--Kuwada--Sturm \cite[Definition~3.1]{EKS2015}.
\begin{definition}[Entropic curvature--dimension condition]
\label{def:cde}
A compact metric probability space $(M,d,m)$ with $\supp m=M$
satisfies $\CDe(K,N)$ if every pair of finite-entropy probability
measures $\mu_0,\mu_1$ admits a constant-speed $W_2$-geodesic
$(\mu_t)_{0\le t\le1}$ such that, for every $t\in[0,1]$,
\begin{equation}\label{eq:cde}
 U_N(\mu_t\mid m)\ge
 \sigma_\kappa^{(1-t)}(D)U_N(\mu_0\mid m)
 +\sigma_\kappa^{(t)}(D)U_N(\mu_1\mid m),
 \text{ where } D=W_2(\mu_0,\mu_1).
\end{equation}
\end{definition}

Using the geodesicity of the limit space proved in Section~\ref{sec:limits},
we construct reference couplings 
between the refinement layers and the limit space
whose displacements tend uniformly to zero.
These couplings allow us both to pass discrete entropy comparisons
to the limit and to recover them on the refinement layers.
We also characterize the limiting condition by midpoint defects
defined on individual layers.

Throughout this section, $\Sigma$ is a complete refinement system
under the standing assumptions of Section~\ref{sec:entropy},
and \eqref{eq:convergence} holds with limit $(M,d,m)$.
We retain $R,L$, $1\le N<\infty$, and $\kappa=K/N$.
All layer indices are at least $R$, and all diameter estimates
use \eqref{eq:tail-diameter}.
By Lemmas~\ref{lem:compact} and~\ref{lem:geodesic-limit},
$Z_{\ge R}$ is compact and $M\subset Z_{\ge R}$ is geodesic.
We write $W=W_{2,Z}$ and $W_n=W_{2,(V_n,d_n)}$.
The isometric inclusions identify $W_n$ with the restriction
of $W$ to probabilities on $V_n$.
The comparison range \eqref{eq:strict-comparison-range}
remains in force when $K>0$.

\subsection{Reference couplings and entropy recovery}

We begin with a criterion for the existence 
of a coupling with a prescribed displacement bound.
\begin{lemma}[Finite-source Hall criterion]\label{lem:hall}
Let $X$ be a finite subset of a compact metric space, let $\mu$ be
a probability on $X$, and let $m$ be a probability on the ambient
space. For $r\ge0$, there exists a coupling in $\Pi(\mu,m)$
supported on $d(x,z)\le r$ if and only if
\begin{equation}\label{eq:hall}
 \mu(A)\le m\!\left(\bigcup_{x\in A}\overline B_r(x)\right)
 \quad\text{for every }A\subset X.
\end{equation}
\end{lemma}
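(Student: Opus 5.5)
The necessity of \eqref{eq:hall} is immediate, and sufficiency follows from a max-flow/min-cut argument once the ambient measure is reduced to finitely many atoms. For necessity, let $\pi\in\Pi(\mu,m)$ be supported on $\{d(x,z)\le r\}$ and fix $A\subset X$. Then $\mu(A)=\pi(A\times\text{space})$. On the support of $\pi$, each pair with first coordinate in $A$ has second coordinate in $U_A:=\bigcup_{x\in A}\overline B_r(x)$. Hence $\mu(A)\le\pi(\text{space}\times U_A)=m(U_A)$.

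For sufficiency, I would partition the ambient space according to which closed balls contain a point. For each $z$, set $T(z)=\{x\in X:d(x,z)\le r\}$. The finitely many level sets $E_T=\{z:T(z)=T\}$, indexed by $T\subset X$, are Borel; each is a finite Boolean combination of closed balls. They partition the space, and $U_A=\bigcup_{T\cap A\ne\varnothing}E_T$.

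This gives a finite bipartite transportation problem. The sources are the points $x\in X$ with supplies $\mu(x)$. The sinks are the sets $T$ with capacities $m(E_T)$. An edge $x\to T$ is allowed exactly when $x\in T$. Hall's condition for this network reads
\[
 \mu(A)\le\sum_{T\cap A\neq\varnothing}m(E_T)=m(U_A),
\]
which is exactly \eqref{eq:hall}. The supply--demand (Gale/max-flow--min-cut) theorem then gives a flow $f(x,T)\ge0$ with $\sum_T f(x,T)=\mu(x)$ and $\sum_x f(x,T)\le m(E_T)$. The total supply is $1$, which equals the total capacity, so each sink is saturated: $\sum_x f(x,T)=m(E_T)$.

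Next, I would define the coupling
\[
 \pi=\sum_{T}\sum_{x\in T}\frac{f(x,T)}{m(E_T)}\,\delta_x\otimes m|_{E_T},
\]
omitting terms with $m(E_T)=0$; for such $T$ the flow vanishes since $\sum_x f(x,T)=m(E_T)=0$. The first marginal is $\sum_T f(x,T)=\mu(x)$. The second marginal on $E_T$ is $m|_{E_T}$, so summing over the partition gives $m$. Every term with $x\in T$ is supported on $\{x\}\times E_T\subset\{d(x,z)\le r\}$.

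The only step requiring care is the reduction to finitely many sinks through the sets $E_T$. This reduction is what lets the finite max-flow theorem apply without any compactness argument. Compactness of the ambient space is not even needed beyond ensuring that the balls are Borel sets.
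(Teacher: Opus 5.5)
Your proof is correct and follows essentially the same route as the paper: you use the same partition of the target into the Borel sets $E_T=\{z:\{x\in X:d(x,z)\le r\}=T\}$, the same finite network with max-flow/min-cut (Hall) producing a flow, saturation from total mass one, and the same coupling $\sum f(x,T)\,\delta_x\otimes m|_{E_T}/m(E_T)$. Your saturation step (summing $\sum_x f(x,T)\le m(E_T)$ over all $T$) also handles $T=\varnothing$ automatically and forces $m(E_\varnothing)=0$, whereas the paper derives this separately from \eqref{eq:hall} with $A=X$.
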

\begin{proof}
If $\beta\in\Pi(\mu,m)$ is supported on $d(x,z)\le r$, mass
starting in $A$ can only reach $\bigcup_{x\in A}\overline B_r(x)$.
The marginal identities give \eqref{eq:hall}.

For sufficiency, reduce the support constraint to a finite network.
Partition the target into the Borel sets
\[
 F_J=\left\{z:\{x\in X:d(x,z)\le r\}=J\right\},
 \qquad J\subset X.
\]
In a finite transport network, join a source node to each $x\in X$
with capacity $\mu(x)$, join $x$ to $F_J$ with unlimited capacity
when $x\in J$, and join $F_J$ to a sink with capacity $m(F_J)$.
The target classes adjacent to a collection $A$ of source points
are those with $J\cap A\ne\varnothing$. Their union is
$\bigcup_{x\in A}\overline B_r(x)$. The associated cut has capacity
\[
 \mu(X\setminus A)+\sum_{J:J\cap A\ne\varnothing}m(F_J)
 =1-\mu(A)+m\!\left(\bigcup_{x\in A}\overline B_r(x)\right)
 \ge1.
\]
Every finite-capacity cut is bounded below by a cut of this form.
The max-flow/min-cut theorem therefore yields a flow of value one.
The total source and target capacities both equal one, so all
marginal capacities are saturated. Writing $f_{xJ}$ for
the flow from $x$ to $F_J$, we have
\[
 f_{xJ}=0\quad(x\notin J),\qquad
 \sum_Jf_{xJ}=\mu(x),\qquad \sum_xf_{xJ}=m(F_J).
\]
In particular, \eqref{eq:hall} with $A=X$ implies $m(F_\varnothing)=0$.
On each positive-mass $F_J$, distribute the flow according to the
normalized restriction of $m$ and set
\[
 \beta=\sum_{x,J:m(F_J)>0}
       f_{xJ}\delta_x\otimes\frac{m|_{F_J}}{m(F_J)}.
\]
The marginal identities give $\beta\in\Pi(\mu,m)$. Every nonzero
summand has $x\in J$ and $F_J\subset\overline B_r(x)$, which proves
the support constraint.
\end{proof}

\begin{lemma}[Quantitative upgrade from weak convergence]
\label{lem:upgrade}
Let $(Y,d_Y)$ be compact with diameter at most $L>0$, let $m$ be
a probability with connected support $M\subset Y$, and let $\mu$
be a probability measure with full support on a finite set
$X\subset Y$. Set
\[
 \alpha=d_H^Y(X,M)/L,\qquad
 \mathfrak p=d_{\mathrm{Pr},\,d_Y/L}(\mu,m),\qquad
 b(s)=\inf_{z\in M}m(B_{Ls}(z)).
\]
Here $d_{\mathrm{Pr},\,d_Y/L}$ is the Prokhorov distance for the
normalized metric. If $\eps>0$ satisfies
\begin{equation}\label{eq:upgrade-threshold}
 \alpha<\eps/4,\qquad
 \mathfrak p<\min\{\eps/4,b(\eps/8)\},
\end{equation}
then $W_{\infty,Y}(\mu,m)\le L\eps$.
Under the standing assumptions of this section, the limit is
connected. Applying this criterion in $Y=Z_{\ge R}$ therefore gives
\begin{equation}\label{eq:winfty-convergence}
 h_n:=W_{\infty,Z}(m_n,m)\longrightarrow0.
\end{equation}
\end{lemma}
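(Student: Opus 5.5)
The plan is to use the finite-source Hall criterion, Lemma~\ref{lem:hall}, with $r=L\eps$. That lemma reduces the $W_\infty$ bound to checking, for every nonempty $A\subset X$,
\[
 \mu(A)\le m\bigl(\textstyle\bigcup_{x\in A}\overline B_{L\eps}(x)\bigr).
\]
A coupling supported on $\{d_Y\le L\eps\}$ then gives $W_{\infty,Y}(\mu,m)\le L\eps$ directly.

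The Prokhorov bound, in the normalized metric $d_Y/L$, says that $\mu(A)\le m(A^{L\mathfrak p'})+\mathfrak p'$ for every $\mathfrak p'>\mathfrak p$. Here $A^{t}$ is the open $t$-neighbourhood, and we take $\mathfrak p'<\min\{\eps/4,b(\eps/8)\}$. It therefore suffices to show
\[
 m\bigl(\textstyle\bigcup_{x\in A}\overline B_{L\eps}(x)\bigr)\ \ge\ m(A^{L\eps/4})+b(\eps/8),
\]
unless the left side already equals $1$, in which case the inequality is trivial.

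The extra mass $b(\eps/8)$ will come from connectedness of $M$. Set $U=A^{L\eps/4}$. If $M\subset\bigcup_{x\in A}B_{L\eps/2}(x)$, the left side is $1$ and we are done. Otherwise consider the two sets
\[
 M\cap\overline{A^{L\eps/2}}\quad\text{and}\quad M\setminus A^{3L\eps/4}.
\]
The first is nonempty: $A\ne\varnothing$ and $\alpha<\eps/4$ give a point of $M$ within $L\eps/4$ of $A$. The second is nonempty by the assumption just made together with a short case check.

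Connectedness of $M$ then yields a point $z\in M$ whose distance to $A$ is exactly $5L\eps/8$. To see this, the continuous function $z\mapsto d_Y(z,A)$ on the connected set $M$ takes a value $\le L\eps/2$ and a value $\ge 3L\eps/4$, so by the intermediate value property it attains $5L\eps/8$.

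The ball $B_{L\eps/8}(z)$ then has three properties. It is disjoint from $U$, since its points lie at distance $>L\eps/2$ from $A$. It is contained in $\bigcup_{x\in A}\overline B_{L\eps}(x)$, since its points lie at distance $<3L\eps/4$ from $A$. Its $m$-mass is at least $b(\eps/8)$, by the definition of $b$. Combining these with $U\subset\bigcup_{x\in A}\overline B_{L\eps}(x)$ gives the required inequality. The main care is in this bookkeeping of radii and in distinguishing open from closed balls. Note that the Hausdorff hypothesis is used only to ensure that $M$ comes close to $A$; full support of $\mu$ is not essential.

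For the application, the standing assumptions give a compact space $Y=Z_{\ge R}$ with $\diam Y\le L$. The set $M$ is geodesic by Lemma~\ref{lem:geodesic-limit}, hence connected. By Lemma~\ref{lem:compact} and the convergence \eqref{eq:convergence}, we have $a_n\to0$ and $d_P(m_n,m)\to0$.

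Fix $\eps>0$. We need $b(\eps/8)>0$. Since $\supp m=M$, every ball centred in $M$ has positive mass. The function $z\mapsto m(B_{s}(z))$ is lower semicontinuous, because the balls are open, so it attains a positive minimum on the compact set $M$.

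Hence the threshold \eqref{eq:upgrade-threshold} holds for all large $n$, and the criterion gives $h_n\le L\eps$ for those $n$. Since $\eps>0$ is arbitrary, this proves \eqref{eq:winfty-convergence}.
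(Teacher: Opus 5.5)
Your proof is correct and follows essentially the same route as the paper. Both arguments use Hall's criterion with $r=L\eps$, absorb the Prokhorov error into a smaller neighbourhood of $A$, and use connectedness of $M$ to place a ball of mass at least $b(\eps/8)$ between that neighbourhood and $\bigcup_{x\in A}\overline B_{L\eps}(x)$. The differences are cosmetic: you centre the ball at distance $5L\eps/8$ rather than $L\eps/2$, your case split would be cleaner made directly at $A^{3L\eps/4}$, and you obtain $b(\eps/8)>0$ from lower semicontinuity of $z\mapsto m(B_s(z))$ instead of the paper's finite-cover argument.
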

\begin{proof}
Connectedness supplies the additional mass needed to absorb the
Prokhorov error. Work with $\widetilde d=d_Y/L$ and write
$A^s=\{z:\widetilde d(z,A)<s\}$. Choose
\[
 \mathfrak p<\delta<\min\{\eps/4,b(\eps/8)\}.
\]
For every nonempty $A\subset X$, Prokhorov's inequality gives
$\mu(A)\le m(A^\delta)+\delta$. If $M\subset A^\eps$, then
$m(A^\eps)=1$ and the required Hall inequality follows immediately.

Otherwise consider $f(z)=\widetilde d(z,A)$ on $M$. Choose $x\in A$.
The Hausdorff estimate supplies a point of $M$ at distance at most
$\alpha$ from $x$, so $\min_Mf\le\alpha<\eps/4$. Since
$M\not\subset A^\eps$, also $\max_Mf\ge\eps$. Continuity and
connectedness imply that $f(z_0)=\eps/2$ for some $z_0\in M$.
As $f$ is $1$-Lipschitz, every $z\in B_{\eps/8}^{\widetilde d}(z_0)$
satisfies
\[
 \frac{3\eps}{8}<f(z)<\frac{5\eps}{8}.
\]
The ball lies in $A^\eps$ and outside $A^\delta$, since
$\delta<\eps/4$. Its mass is at least $b(\eps/8)>\delta$, enough
to absorb the Prokhorov error. The two disjoint subsets of $A^\eps$
therefore give
\[
 \mu(A)\le m(A^\delta)+\delta
 \le m(A^\delta)+m(B_{\eps/8}^{\widetilde d}(z_0))
 \le m(A^\eps).
\]
The case $A=\varnothing$ is immediate. Since the open neighborhood
$A^\eps$ is contained in the union of the closed $\eps$-balls,
Lemma~\ref{lem:hall} gives a coupling with displacement
at most $L\eps$ in $d_Y$.

It remains to show that $b(s)>0$ for every $s>0$.
Choose finitely many points
$z_1,\ldots,z_q\in M$ whose open $Ls/2$-balls cover $M$. Full support
gives $m(B_{Ls/2}(z_i))>0$. For every $z\in M$, select $i$ with
$d_Y(z,z_i)<Ls/2$; then $B_{Ls/2}(z_i)\subset B_{Ls}(z)$, whence
\[
 b(s)\ge\min_{1\le i\le q}m(B_{Ls/2}(z_i))>0.
\]
Apply the criterion in $Y=Z_{\ge R}$. Under \eqref{eq:convergence},
the corresponding Hausdorff and Prokhorov errors tend to zero.
For each fixed $\eps>0$, the threshold
\eqref{eq:upgrade-threshold} eventually holds, proving
\eqref{eq:winfty-convergence}.
\end{proof}

The convergence \eqref{eq:winfty-convergence} uses only the
refinement geometry and \eqref{eq:convergence}.
The infimum defining $h_n$ is attained: choose couplings
with support radii at most $h_n+1/j$ and extract a weakly converging
subsequence in the compact coupling space. For every $\eta>0$, all
sufficiently late couplings are concentrated on the closed set
$\{(x,z):d_Z(x,z)\le h_n+\eta\}$. The limit is concentrated there
as well. Intersecting over $\eta\downarrow0$ gives support radius
$h_n$.
We may therefore choose $\beta_n\in\Pi(m_n,m)$ supported on
$d_Z(x,z)\le h_n$.
These couplings are constructed directly from the reference measures.
For an endpoint-controlled system with endpoint modulus $\omega_n$,
Theorem~\ref{thm:compact-limit} gives $h_n\le\omega_n$.
The couplings constructed there have support radii at most $\omega_n$.
The transfer and recovery arguments below also apply to these couplings, 
with $\omega_n$ in place of $h_n$.

A reference coupling transfers probabilities by averaging densities.
For any $\beta_n\in\Pi(m_n,m)$, write
\[
 \beta_n(\{x\},\dd z)=m_n(x)k_n(x,\dd z)=p_n(z,x)m(\dd z).
\]
Here $k_n$ and $p_n$ are stochastic kernels. Define the lift $E_n$
and the projection $T_n$ by
\begin{align}
 E_n(\rho_nm_n)(\dd z)
 &=\sum_x\rho_n(x)\beta_n(\{x\},\dd z),\label{eq:lift}\\
 T_n(\rho m)(x)&=\int p_n(z,x)\rho(z)\,\dd m(z).
 \label{eq:project}
\end{align}
Both maps preserve probability and depend on the chosen coupling.

\begin{lemma}[Transfer and entropy contraction]\label{lem:transfer}
For every $p\in\Pp(V_n)$ and finite-entropy $\mu\ll m$,
\begin{equation}\label{eq:entropy-contraction}
 \Ent_m(E_np)\le H_n(p),\qquad H_n(T_n\mu)\le\Ent_m(\mu).
\end{equation}
If $\beta_n$ is supported on $d_Z(x,z)\le h$, then, uniformly over
these probabilities,
\begin{equation}\label{eq:uniform-transfer}
 W(E_np,p)\le h,\qquad W(T_n\mu,\mu)\le h.
\end{equation}
\end{lemma}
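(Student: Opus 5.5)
The proof splits into the two entropy inequalities and the two transport bounds. Both entropy inequalities follow from Proposition~\ref{prop:carrier-entropy}, applied to the kernels associated with $\beta_n$.

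For the lift, the Markov kernel $k_n$ from $V_n$ to $Z$ satisfies $m_nk_n=m$, because the second marginal of $\beta_n$ is $m$. For $p=\rho_nm_n$ we have
\[
 E_np=\sum_x\rho_n(x)m_n(x)k_n(x,\cdot)=pk_n.
\]
The kernel inequality \eqref{eq:kernel-entropy-contraction} therefore gives $\Ent_m(E_np)\le\Ent_{m_n}(p)=H_n(p)$. Here $V_n$ is finite and $Z$ is compact metric, so both are standard Borel spaces.

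For the projection, $p_n(z,\cdot)$ is a Markov kernel from $Z$ to $V_n$ with $mp_n=m_n$, by the first marginal of $\beta_n$. Then $T_n(\rho m)=\mu p_n$, and the same inequality gives $H_n(T_n\mu)\le\Ent_m(\mu)$. One measurability point needs checking: the disintegration $p_n$ exists and is measurable in $z$. This holds because $p_n(z,x)$ is the Radon--Nikodym density of $\beta_n(\{x\},\cdot)$ with respect to $m$. These densities can be chosen nonnegative and summing to $1$, modifying on an $m$-null set if necessary.

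For the transport bounds, I will build explicit couplings from $\beta_n$. For the lift, set
\[
 \pi(\dd x,\dd z)=\rho_n(x)\beta_n(\{x\},\dd z).
\]
Its first marginal is $\rho_n(x)m_n(x)=p(x)$, and its second marginal is $E_np$. Since $\pi\ll\beta_n$, it is supported on $\{d_Z(x,z)\le h\}$. Hence $W(E_np,p)^2\le\int d_Z^2\,\dd\pi\le h^2$.

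For the projection, set
\[
 \pi'(\dd z,\{x\})=\rho(z)p_n(z,x)\,m(\dd z)=\rho(z)\beta_n(\{x\},\dd z).
\]
Its marginals are $\mu$ and $T_n\mu$. It is again absolutely continuous with respect to $\beta_n$, so the same bound applies. The isometric inclusion of $V_n$ into $Z$ identifies these costs with $W=W_{2,Z}$.

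Neither bound depends on $p$ or $\mu$, which gives the uniformity. No step is a real obstacle. The only point needing care is that absolute continuity with respect to $\beta_n$ transfers the support constraint $d_Z\le h$ to $\pi$ and $\pi'$. This requires the closed set $\{d_Z\le h\}$ to carry full $\beta_n$-mass, which holds because $\beta_n$ was chosen with support radius at most $h$.
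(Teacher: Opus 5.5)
Your proposal is correct and is essentially the paper's proof. Both entropy inequalities come from the kernel contraction \eqref{eq:kernel-entropy-contraction} applied to $k_n$ and $p_n$ (using $m_nk_n=m$ and $mp_n=m_n$), and both transport bounds come from the reweighted couplings $\rho_n(x)\beta_n(\{x\},\dd z)$ and $\rho(z)\beta_n(\{x\},\dd z)$, which are absolutely continuous with respect to $\beta_n$ and hence concentrated on $\{d_Z\le h\}$; your remark on choosing the densities $p_n(z,x)$ measurably is a harmless detail that the paper leaves implicit.
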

\begin{proof}
Write $p=\rho_nm_n$ and $\mu=\rho m$. The transferred densities are
\[
 \frac{\dd E_np}{\dd m}(z)=\sum_xp_n(z,x)\rho_n(x),\qquad
 \frac{T_n\mu(x)}{m_n(x)}=\int\rho(z)k_n(x,\dd z).
\]
Each density is a conditional average of the source density.
The kernel entropy inequality \eqref{eq:kernel-entropy-contraction},
applied to $k_n$ and $p_n$, gives
\eqref{eq:entropy-contraction}.

For the transport estimates, define
\[
 \beta_n^p(\{x\},\dd z)=\rho_n(x)\beta_n(\{x\},\dd z),\qquad
 \beta_n^\mu(\{x\},\dd z)=\rho(z)\beta_n(\{x\},\dd z).
\]
These measures have marginals $(p,E_np)$ and $(T_n\mu,\mu)$,
respectively, and are supported on $\supp\beta_n$.
If $d_Z(x,z)\le h$ there, both transport costs are at most $h^2$.
This proves \eqref{eq:uniform-transfer}.
\end{proof}

To recover entropy values, we also use joint lower semicontinuity.
On compact $Z$, the variational formula~\cite{Villani2009}
\[
 \Ent_\lambda(\mu)=\sup_{f\in C(Z)}
 \left\{\int f\,\dd\mu-\log\int e^f\,\dd\lambda\right\}
\]
expresses relative entropy as a supremum of continuous functions
of $(\mu,\lambda)$.
Thus relative entropy is jointly lower semicontinuous,
and $U_N(\mu\mid\lambda)$ is jointly upper semicontinuous.

\begin{lemma}[Fixed-endpoint entropy recovery]
\label{lem:recovery}
Choose reference couplings $\beta_n\in\Pi(m_n,m)$ supported on
$d_Z(x,z)\le h_n$, where $h_n\to0$, and let $T_n$ be the
corresponding projections.
For every fixed finite-entropy probability $\mu\ll m$,
\begin{equation}
 W(T_n\mu,\mu)\longrightarrow0,
 \qquad H_n(T_n\mu)\longrightarrow\Ent_m(\mu).
 \label{eq:recovery}
\end{equation}
\end{lemma}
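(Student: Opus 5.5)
The transport statement follows at once from \eqref{eq:uniform-transfer}: since $\beta_n$ is supported on $d_Z(x,z)\le h_n$, we get $W(T_n\mu,\mu)\le h_n\to0$. For the entropy, I would prove matching upper and lower bounds. The upper bound comes from Lemma~\ref{lem:transfer}, which gives $H_n(T_n\mu)\le\Ent_m(\mu)$ for every $n$, so
\[
 \limsup_{n\to\infty}H_n(T_n\mu)\le\Ent_m(\mu).
\]
It remains to show $\liminf_n H_n(T_n\mu)\ge\Ent_m(\mu)$.

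For the lower bound, I would view $T_n\mu$ and $m_n$ as probability measures on the compact space $Z$ through the isometric inclusions. On the finite layer, $H_n(T_n\mu)=\Ent_{m_n}(T_n\mu)$, where the right-hand side is computed in $Z$: pushing forward by the injection $V_n\hookrightarrow Z$ does not change densities. We have $T_n\mu\rightharpoonup\mu$, because $W(T_n\mu,\mu)\to0$ on the compact space $Z$. We also have $m_n\rightharpoonup m$ by \eqref{eq:convergence}. Joint lower semicontinuity of relative entropy, obtained from the variational formula quoted just before the lemma, then gives
\[
 \Ent_m(\mu)\le\liminf_{n\to\infty}\Ent_{m_n}(T_n\mu)=\liminf_{n\to\infty}H_n(T_n\mu).
\]
Combining the two bounds proves \eqref{eq:recovery}.

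The only step that needs care is identifying the discrete entropy $H_n$ with the relative entropy of measures on $Z$. The inclusions $V_n\to Z$ are isometric. After the quotient by zero distances in Lemma~\ref{lem:gluing}, a point of $V_n$ is not identified with another point of the same layer, since $d_n$ is a metric. Hence distinct points of $V_n$ stay distinct in $Z$, the pushforward is injective on atoms, and the entropy is unchanged.

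Two further checks are needed. First, $T_n\mu\ll m_n$ always holds, because $m_n$ has full support on $V_n$. Second, continuity of the test functions $f\in C(Z)$ makes each term $\int f\,\dd\mu-\log\int e^f\,\dd\lambda$ jointly continuous under weak convergence, which is what yields joint lower semicontinuity. I expect no genuine obstacle beyond these checks.
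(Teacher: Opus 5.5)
Your proposal is correct and follows the paper's proof: $W(T_n\mu,\mu)\le h_n$ from the uniform transfer estimate, the upper bound $H_n(T_n\mu)\le\Ent_m(\mu)$ from entropy contraction, and the matching lower bound from joint lower semicontinuity using $T_n\mu\rightharpoonup\mu$ and $m_n\rightharpoonup m$. You also check that $H_n$ coincides with the relative entropy of the pushforwards to $Z$, a detail the paper leaves implicit.
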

\begin{proof}
Lemma~\ref{lem:transfer} gives $W(T_n\mu,\mu)\le h_n\to0$.
Since $m_n\rightharpoonup m$, joint lower semicontinuity and
entropy contraction give
\[
 \Ent_m(\mu)
 \le\liminf_n H_n(T_n\mu)
 \le\limsup_n H_n(T_n\mu)
 \le\Ent_m(\mu).
\]
This proves the entropy convergence.
\end{proof}

The transport estimate is uniform over the source probabilities.
The entropy convergence is asserted for each fixed finite-entropy
endpoint. For endpoints that vary with the layer, we will use
entropy contraction directly.

\subsection{Midpoint defects on individual layers}

Recall the midpoint coefficient $S_\kappa$ from
\eqref{eq:mid-coefficient}. For $n\ge R$ and
$p,q,\nu\in\Pp(V_n)$, set
\[
 D=W_n(p,q),\quad
 A_n(p,q;\nu)=2W_n(p,\nu)^2+2W_n(\nu,q)^2,
 \quad F_n(p,q)=S_\kappa(D)[U_n(p)+U_n(q)].
\]
The single-layer midpoint defect is
\begin{equation}\label{eq:single-defect}
 \Delta_n(K,N)=\max_{p,q\in\Pp(V_n)}\ \min_{\nu\in\Pp(V_n)}
 \max\left\{\frac{A_n(p,q;\nu)-D^2}{L^2},
                   \bigl(F_n(p,q)-U_n(\nu)\bigr)_+\right\}.
\end{equation}
The defect is nonnegative, since $A_n\ge D^2$.
Both extrema are attained: the probability simplex is compact,
$u\log u$ extends continuously to zero, and the Wasserstein distance
is continuous. Uniform continuity on the compact product also makes
the minimum value continuous in $(p,q)$.

Thus $\Delta_n(K,N)\to0$ means that both midpoint errors vanish
uniformly over the endpoint probabilities.
This is the single-layer counterpart of the admissibility and
entropy requirements in Definition~\ref{def:midpoint}:
the endpoints and the intermediate measure now all lie on $V_n$.

\begin{proposition}[Obstruction on a finite layer]
\label{prop:finite-obstruction}
If $|V_n|\ge2$, there exists a pair of finite-entropy probability measures on $X_n$ 
which admit no midpoint with respect to $W_n$. 
Moreover,
\[
 \Delta_n(K,N)\ge \delta_n^2/L^2,
 \qquad
 \delta_n=\min_{x\ne y}d_n(x,y).
\]
Consequently, $X_n$ satisfies 
neither $\CDe(K,N)$ 
nor the classical Lott--Sturm--Villani condition $\mathrm{CD}(K,N)$, 
both understood with respect to the metric Wasserstein distance $W_n$.
\end{proposition}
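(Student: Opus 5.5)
The plan is to take Dirac endpoints at a pair of points realizing $\delta_n$. Choose $x\ne y$ in $V_n$ with $d_n(x,y)=\delta_n>0$, and set $p=\delta_x$, $q=\delta_y$, so that $D=W_n(p,q)=\delta_n$. Both measures have finite entropy, namely $H_n(\delta_x)=-\log m_n(x)<\infty$ by full support of $m_n$.

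The key estimate is a lower bound on the action $A_n(p,q;\nu)$ for an arbitrary $\nu\in\Pp(V_n)$. Since $p$ and $q$ are Dirac masses, the only couplings are products, so
\[
 W_n(\delta_x,\nu)^2=\sum_z\nu(z)d_n(x,z)^2,\qquad W_n(\nu,\delta_y)^2=\sum_z\nu(z)d_n(z,y)^2 .
\]
Hence $A_n(p,q;\nu)=2\sum_z\nu(z)\bigl[d_n(x,z)^2+d_n(z,y)^2\bigr]$, and it suffices to bound the bracket below for each $z$. If $z\in\{x,y\}$, the bracket equals $\delta_n^2$. If $z\notin\{x,y\}$, both distances are at least $\delta_n$ by the definition of $\delta_n$, so the bracket is at least $2\delta_n^2$. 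In either case the bracket is at least $\delta_n^2$, which gives $A_n\ge 2\delta_n^2=D^2+\delta_n^2$ for every $\nu$.

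From this the three conclusions follow.

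First, no $\nu$ can be a $W_n$-midpoint. A midpoint would have $W_n(p,\nu)=W_n(\nu,q)=D/2$, and therefore $A_n=D^2$, which contradicts $A_n\ge D^2+\delta_n^2$.

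Second, in the definition \eqref{eq:single-defect} the first term inside the max is at least $\delta_n^2/L^2$ for every $\nu$. The minimum over $\nu$ is therefore at least $\delta_n^2/L^2$, and so is the maximum over $(p,q)$. This gives $\Delta_n(K,N)\ge\delta_n^2/L^2$.

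Third, both $\CDe(K,N)$ (Definition~\ref{def:cde}) and $\mathrm{CD}(K,N)$ require, for finite-entropy endpoints, the existence of a constant-speed $W_n$-geodesic. Its value at $t=1/2$ would be a midpoint, which we have excluded. So $X_n$ satisfies neither condition.

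The argument is elementary. The only point needing care is that a midpoint must be a measure on $V_n$, and the dichotomy $z\in\{x,y\}$ versus $z\notin\{x,y\}$ handles exactly this. No step is a real obstacle.
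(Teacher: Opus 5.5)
Your proposal is correct and follows essentially the same route as the paper's proof: Dirac endpoints at a pair $x,y$ realizing $\delta_n$, the pointwise bound $d_n(x,z)^2+d_n(z,y)^2\ge\delta_n^2$ via the dichotomy $z\in\{x,y\}$ versus $z\notin\{x,y\}$, and then the midpoint exclusion and the defect bound obtained by multiplying by two and subtracting $D^2=\delta_n^2$. Your remark that couplings with a Dirac endpoint are forced to be products just makes explicit a step the paper leaves implicit.
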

\begin{proof}
Choose $x,y\in V_n$ with $d_n(x,y)=\delta_n$.
For every $z\in V_n$,
$d_n(x,z)^2+d_n(z,y)^2\ge\delta_n^2$: equality holds at $x,y$,
whereas both terms are at least $\delta_n^2$ at every other point.
Consequently every $\nu\in\Pp(V_n)$ satisfies
\[
 W_n(\delta_x,\nu)^2+W_n(\nu,\delta_y)^2\ge\delta_n^2.
\]
A Wasserstein midpoint would instead make this sum $\delta_n^2/2$.
The endpoint Dirac measures have finite entropy because the reference
masses are positive. Thus the required comparison geodesic does not
exist. Multiplying the inequality by two and subtracting
$D^2=\delta_n^2$ proves the proposition.
\end{proof}
This obstruction concerns the metric Wasserstein distance $W_n$.
Curvature conditions for graphs and Markov chains 
may instead use a different transport metric or a chosen generator \cite{ErbarMaas2012}.
Here we retain the metric Wasserstein distances and impose the
curvature condition on the refinement sequence.
Although each nontrivial finite layer has a positive midpoint defect,
these defects may tend to zero along the sequence.
Under the standing convergence assumptions, 
we shall show that this vanishing is equivalent to $\SCD(K,N)$ for the sequence
and to $\CDe(K,N)$ for its limit.

\subsection{Quantitative recovery on layers and segments}

In this subsection, suppose that the limit $M$ satisfies $\CDe(K,N)$.
We lift the prescribed endpoint probability measures 
from their respective refinement layers to $M$.
We then choose a comparison geodesic between the lifted measures 
and project its intermediate measures onto the required layers.
Entropy contraction preserves the lower comparison bounds;
the support radii control the transport and coefficient errors.

\begin{theorem}[Single-layer midpoint recovery]\label{thm:single-recovery}
Suppose $(M,d,m)$ satisfies $\CDe(K,N)$ and
$\beta_n\in\Pi(m_n,m)$ is supported on $d_Z(x,z)\le h$.
For any $p,q\in\Pp(V_n)$ with $D=W_n(p,q)$, there exists
$\nu\in\Pp(V_n)$ such that, writing $a=W_n(p,\nu)$ and $b=W_n(\nu,q)$,
\begin{align}
 |a-D/2|, |b-D/2|&\le3h,\label{eq:midpoint-position}\\
 0\le 2a^2+2b^2-D^2
 &\le\min\{12Lh+16h^2,\ 12Dh+36h^2\}.\label{eq:sharp-action}
\end{align}
Define
\begin{equation}\label{eq:Ck}
 C_{\kappa,L}=
 \begin{cases}
 \displaystyle\frac{\kappa}{8\cos^2(\sqrt\kappa L/2)},&\kappa>0,\\
 0,&\kappa=0,\\
 |\kappa|/8,&\kappa<0.
 \end{cases}
\end{equation}
The same $\nu$ satisfies
\begin{equation}\label{eq:sharp-entropy}
 U_n(\nu)\ge
 \left[S_\kappa(D)-2C_{\kappa,L}h(D+h)\right]
          [U_n(p)+U_n(q)].
\end{equation}
For $D=0$ one may take $\nu=p=q$, giving exact comparisons.
\end{theorem}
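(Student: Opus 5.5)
The proof lifts the endpoints to the limit with the transfer maps of Lemma~\ref{lem:transfer}, uses the comparison geodesic supplied by $\CDe(K,N)$ at $t=1/2$, and projects its midpoint back to $V_n$. Concretely, set $\tilde p=E_np$ and $\tilde q=E_nq$. By \eqref{eq:entropy-contraction} both have finite entropy, since $H_n<\infty$ on the finite layer. By \eqref{eq:uniform-transfer}, $W(\tilde p,p),W(\tilde q,q)\le h$, hence $\tilde D:=W(\tilde p,\tilde q)$ satisfies $|\tilde D-D|\le2h$. Let $(\mu_t)$ be the geodesic from Definition~\ref{def:cde} and define $\nu=T_n\mu_{1/2}$. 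Then $W(\nu,\mu_{1/2})\le h$ and $H_n(\nu)\le\Ent_m(\mu_{1/2})$, so $U_n(\nu)\ge U_N(\mu_{1/2}\mid m)$. Also $U_N(\tilde p\mid m)\ge U_n(p)$ and likewise for $q$. Since $S_\kappa\ge0$, \eqref{eq:cde} at $t=1/2$ gives
\[
 U_n(\nu)\ge S_\kappa(\tilde D)\,[U_n(p)+U_n(q)].
\]
The case $D=0$ is treated separately by taking $\nu=p$. Everything happens inside $Z_{\ge R}$, where the isometric inclusions identify $W_n$ with $W$, and $\tilde D\le\diam M\le L$.

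\textbf{Position and action estimates.} We have $W(\mu_{1/2},\tilde p)=\tilde D/2$, so the triangle inequality gives $|a-\tilde D/2|\le2h$, and hence $|a-D/2|\le h+2h=3h$; the same holds for $b$. This is \eqref{eq:midpoint-position}. For the action, the lower bound $2a^2+2b^2\ge(a+b)^2\ge D^2$ follows from the triangle inequality. For the upper bound, write $a=D/2+\alpha$ and $b=D/2+\beta$ with $|\alpha|,|\beta|\le3h$. Then
\[
 2a^2+2b^2-D^2=2D(\alpha+\beta)+2(\alpha^2+\beta^2)\le12Dh+36h^2.
\]
For the other branch of the minimum, use the sharper bounds $a\le\tilde D/2+2h$ and $\tilde D\le D+2h$ together with $D\le L$. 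A short case check on the cross terms should give $12Lh+16h^2$. I expect some bookkeeping here: either bound $a,b\le \tilde D/2+2h$ directly and expand against $D^2\ge(\tilde D-2h)^2$, or use $\tilde D\le L$.

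\textbf{Coefficient estimate (the main step).} We need $S_\kappa(\tilde D)\ge S_\kappa(D)-2C_{\kappa,L}h(D+h)$. The plan uses the fact that $S_\kappa$ is even and smooth with $S_\kappa'(0)=0$, together with a bound $|S_\kappa''|\le 2C_{\kappa,L}$ on $[0,L]$. For $\kappa>0$, differentiating $[2\cos(\sqrt\kappa x/2)]^{-1}$ gives $S''=\frac{\kappa}{8}\,\frac{1+\sin^2}{\cos^3}$, evaluated at $\sqrt\kappa x/2$. The bound therefore has to be checked against the stated constant, possibly with the factor $2$ absorbing the difference; for $\kappa<0$ the analogous bound gives $|\kappa|/8$. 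With $|S'(x)|\le 2C_{\kappa,L}x$, the mean value theorem on the interval between $D$ and $\tilde D$ (both at most $D+2h$, both in $[0,L]$) yields
\[
 |S_\kappa(\tilde D)-S_\kappa(D)|\le 2C_{\kappa,L}\,(D+2h)\,|\tilde D-D|/1\cdot\tfrac12\cdot 2 .
\]
This must be tightened to $2C_{\kappa,L}h(D+h)$, most naturally by writing $S(\tilde D)-S(D)=\int_D^{\tilde D}S'$ and integrating $|S'(x)|\le c\,x$ exactly: $\tfrac{c}{2}\,|\tilde D^2-D^2|\le \tfrac c2(4Dh+4h^2)=2ch(D+h)$. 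This gives the required bound with $c=C_{\kappa,L}$, provided $|S'(x)|\le C_{\kappa,L}x$. So the crux is verifying $\sup_{[0,L]}S'(x)/x\le C_{\kappa,L}$, i.e.\ $\frac{\kappa}{4}\,\frac{\sin u}{u\cos^2 u}$ with $u=\sqrt\kappa x/2$. The factor $\sin u/u\le1$ helps here, but the resulting bound is $\kappa/(4\cos^2)$ rather than $\kappa/(8\cos^2)$.

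I expect to settle the exact constant by redoing this calculus carefully, and it may turn out that only the one-sided bound is needed. Finally, multiplying through by $U_n(p)+U_n(q)$ gives \eqref{eq:sharp-entropy}.
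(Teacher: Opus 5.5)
Your argument is the paper's proof. You lift with $E_n$, take the midpoint of a $\CDe(K,N)$ comparison geodesic between the lifts, project with $T_n$, and apply entropy contraction on both sides. The position estimate, the $12Dh+36h^2$ branch, and the integration $|S_\kappa(\tilde D)-S_\kappa(D)|\le\frac{C_{\kappa,L}}{2}|\tilde D^2-D^2|\le 2C_{\kappa,L}h(D+h)$ all match the paper.

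The step you left open is not an obstruction but an arithmetic slip. With $u=\sqrt\kappa s/2$ one has $S_\kappa'(s)=\sqrt\kappa\sin u/(4\cos^2u)$ and $s=2u/\sqrt\kappa$, hence
\[
 \frac{S_\kappa'(s)}{s}=\frac{\kappa}{8}\,\frac{\sin u}{u\cos^2u}
 \le\frac{\kappa}{8\cos^2(\sqrt\kappa L/2)}=C_{\kappa,L}
 \qquad(0<s\le L),
\]
using $\sin u\le u$ and $u\le\sqrt\kappa L/2<\pi/2$. Your own value $S_\kappa''(0)=\kappa/8$ already rules out $\kappa/4$. For $\kappa<0$ the same computation gives $|S_\kappa'(s)|/s=\frac{|\kappa|}{8}\frac{\tanh u}{u\cosh u}\le|\kappa|/8$.

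So the stated constant is exact, and it comes from bounding $S_\kappa'(s)/s$ directly. Drop the alternative via a bound on $|S_\kappa''|$: for $\kappa>0$ one has $\sup_{[0,L]}S_\kappa''=\frac{\kappa}{8}\frac{1+\sin^2u_L}{\cos^3u_L}>C_{\kappa,L}$ with $u_L=\sqrt\kappa L/2$, so that route loses a constant.

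The other action branch needs no case check. Squaring $a,b\le\tilde D/2+2h$ gives $2a^2+2b^2\le\tilde D^2+8\tilde Dh+16h^2$. Then $|\tilde D^2-D^2|\le(\tilde D+D)\,2h\le 4Lh$, together with $\tilde D\le L$, gives $12Lh+16h^2$.

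Finally, before applying Lemma~\ref{lem:transfer} to $\mu_{1/2}$, note that the comparison inequality itself gives $U_N(\mu_{1/2}\mid m)\ge S_\kappa(\tilde D)\bigl[U_N(\tilde p\mid m)+U_N(\tilde q\mid m)\bigr]>0$. Hence $\mu_{1/2}\ll m$ has finite entropy and $T_n\mu_{1/2}$ is defined. The paper records this in the same place.
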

\begin{proof}
Assume $D>0$. Set
$\bar p=E_np$ and $\bar q=E_nq$, and write
$D_*=W_2(\bar p,\bar q)$. Lemma~\ref{lem:transfer} gives
finite entropy for both lifts and
\[
 W(p,\bar p)\le h,\qquad W(q,\bar q)\le h,
 \qquad |D_*-D|\le2h.
\]
Choose a midpoint $\bar\nu$ on a $\CDe(K,N)$ comparison geodesic
between the lifts. The comparison gives $U_N(\bar\nu\mid m)>0$,
so $\bar\nu$ has finite entropy. Put $\nu=T_n\bar\nu$; then
$W(\nu,\bar\nu)\le h$. The reverse triangle inequality yields
\begin{align*}
 \left|W(p,\nu)-W(\bar p,\bar\nu)\right|
 &\le W(p,\bar p)+W(\nu,\bar\nu)\le2h,\\
 \left|W(\nu,q)-W(\bar\nu,\bar q)\right|
 &\le W(\nu,\bar\nu)+W(q,\bar q)\le2h.
\end{align*}
Since both lifted distances equal $D_*/2$, we obtain
$|a-D_*/2|,|b-D_*/2|\le2h$. Combining with $|D_*-D|\le2h$
proves \eqref{eq:midpoint-position} and $a,b\le D/2+3h$.

For the first action estimate, squaring $a,b\le D_*/2+2h$ gives
\[
 2a^2+2b^2\le4(D_*/2+2h)^2
             =D_*^2+8D_*h+16h^2.
\]
Since $D,D_*\le L$, we have
$|D_*^2-D^2|\le2L|D_*-D|\le4Lh$. It follows that
$2a^2+2b^2-D^2\le12Lh+16h^2$.
Squaring $a,b\le D/2+3h$ instead gives
\[
 2a^2+2b^2\le4(D/2+3h)^2
             =D^2+12Dh+36h^2.
\]
Finally $D\le a+b$ implies
$D^2\le(a+b)^2\le2(a^2+b^2)$, proving nonnegativity.

For the entropy estimate, we bound the variation of $S_\kappa$. If $\kappa>0$, set
$u=\sqrt\kappa s/2$. Differentiation gives
\[
 S_\kappa'(s)=\frac{\sqrt\kappa\sin u}{4\cos^2u},\qquad
 |S_\kappa'(s)|\le
 \frac{\kappa s}{8\cos^2(\sqrt\kappa L/2)}.
\]
Here $0\le u\le\sqrt\kappa L/2<\pi/2$, so the denominator is
positive and $\sin u\le u$. If $\kappa<0$, the corresponding formula is
\[
 |S_\kappa'(s)|=
 \frac{\sqrt{-\kappa}\sinh u}{4\cosh^2u}
 =\frac{\sqrt{-\kappa}}4\frac{\tanh u}{\cosh u}
 \le\frac{|\kappa|s}{8},
 \qquad u=\sqrt{-\kappa}s/2.
\]
For $\kappa=0$ the derivative is zero. Thus in every case
$|S_\kappa'(s)|\le C_{\kappa,L}s$ on $[0,L]$. Integrating between
$D$ and $D_*$ and using $D_*\le D+2h$ gives
\begin{align*}
 |S_\kappa(D_*)-S_\kappa(D)|
 &\le\frac{C_{\kappa,L}}2|D_*^2-D^2|\\
 &\le\frac{C_{\kappa,L}}2(2h)(2D+2h)
 =2C_{\kappa,L}h(D+h).
\end{align*}
Apply entropy contraction before and after the limit midpoint
comparison to obtain
\begin{align*}
 U_n(\nu)
 &\ge U_N(\bar\nu\mid m)\\
 &\ge S_\kappa(D_*)[U_N(\bar p\mid m)+U_N(\bar q\mid m)]\\
 &\ge S_\kappa(D_*)[U_n(p)+U_n(q)].
\end{align*}
The last inequality uses $S_\kappa(D_*)>0$.
The coefficient estimate proves \eqref{eq:sharp-entropy}; it remains
valid when the bracket on its right-hand side is negative. 
When $D=0$, taking $\nu=p=q$ gives zero action and equality
in the unperturbed midpoint entropy comparison,
since $S_\kappa(0)=1/2$.
\end{proof}

For the couplings chosen with support radius $h_n$,
apply Theorem~\ref{thm:single-recovery} with $h=h_n$.
Since $h_n\to0$, we obtain
\begin{equation}\label{eq:defect-bound}
 \Delta_n(K,N)\le
 \max\left\{\frac{12Lh_n+16h_n^2}{L^2},
                   4C_{\kappa,L}h_n(L+h_n)\right\}\longrightarrow0.
\end{equation}
Here the entropy bound uses $U_n(p)+U_n(q)\le2$.
When $K=0$, the entropy comparison holds without an error term;
the action error may still be positive.

\begin{proposition}[Uniform recovery on fixed segments]
\label{prop:window-recovery}
Suppose $(M,d,m)$ satisfies $\CDe(K,N)$ and, for every $n\ge R$,
there is a coupling $\beta_n\in\Pi(m_n,m)$ supported on
$d_Z(x,z)\le h_n$, where $h_n\to0$. Put $\widehat h_r=\sup_{n\ge r}h_n$ and
\[
 B_{\kappa,L}=\sup_{0\le t\le1,\,0\le s\le L}
                   |\partial_s\sigma_\kappa^{(t)}(s)|<\infty.
\]
For each fixed integer $J\ge2$, every $r\ge R$, and every pair
$\mu_0\in\Pp(V_r)$, $\mu_J\in\Pp(V_{r+J})$, there is a probability
section whose action excess is at most
\begin{equation}\label{eq:window-action-rate}
 4(J+1)L\widehat h_r+4J^2\widehat h_r^2
\end{equation}
and whose entropy comparison error is at most
$4B_{\kappa,L}\widehat h_r$ at every intermediate layer.
For every $\eps>0$ and all sufficiently large $r$, this section
belongs to $\mathcal G_{r,J}^{\eps}(\mu_0,\mu_J)$ and satisfies
all entropy comparisons with tolerance $\eps$.
In particular, $\Sigma$ satisfies $\SCD(K,N)$.
\end{proposition}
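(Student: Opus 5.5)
We follow the scheme of Theorem~\ref{thm:single-recovery}, now across layers. Fix $J\ge2$, $r\ge R$, endpoints $\mu_0\in\Pp(V_r)$ and $\mu_J\in\Pp(V_{r+J})$, and put $h=\widehat h_r$, so every coupling $\beta_{r+j}$, $0\le j\le J$, has support radius at most $h$. Lift the endpoints by $\bar\mu_0=E_r\mu_0$ and $\bar\mu_J=E_{r+J}\mu_J$. By Lemma~\ref{lem:transfer} both lifts have finite entropy, with $\Ent_m(\bar\mu_0)\le H_r(\mu_0)$ and $\Ent_m(\bar\mu_J)\le H_{r+J}(\mu_J)$. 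They also satisfy $W(\mu_0,\bar\mu_0),W(\mu_J,\bar\mu_J)\le h$, hence $|D_*-D|\le 2h$, where $D_*=W(\bar\mu_0,\bar\mu_J)$. Let $(\bar\mu_t)$ be a $\CDe(K,N)$ comparison geodesic between the lifts, and define the section by $\mu_j=T_{r+j}\bar\mu_{j/J}$ for $1\le j<J$, keeping the prescribed endpoints. The bound \eqref{eq:cde} gives $U_N(\bar\mu_{j/J}\mid m)>0$, so each $\bar\mu_{j/J}$ has finite entropy and $T_{r+j}$ applies.

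\textbf{Action.} Lemma~\ref{lem:transfer} gives $W(\mu_j,\bar\mu_{j/J})\le h$ for every $0\le j\le J$. Together with $W(\bar\mu_{j/J},\bar\mu_{(j+1)/J})=D_*/J$, this yields $W(\mu_j,\mu_{j+1})\le D_*/J+2h\le D/J+4h$. Squaring and summing gives
\[
 \mathcal A_{r,J}(\boldsymbol\mu)\le J\cdot J\,(D/J+4h)^2 = D^2+8JDh+16J^2h^2 .
\]
This is cruder than \eqref{eq:window-action-rate} in the linear term. To reach the stated constant we refine: for the first step we use $W(\mu_0,\mu_1)\le W(\bar\mu_0,\bar\mu_{1/J})+2h$, and similarly at the last step, while bounding $D_*\le D+2h$ only once overall rather than per step. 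Concretely, write $\sum_j W(\mu_j,\mu_{j+1})^2\le J(D_*/J+2h)^2$ and compare $D_*^2$ with $D^2$ using $|D_*^2-D^2|\le 4Lh$. This gives $\mathcal A\le D_*^2+4D_*Jh+4J^2h^2\le D^2+4Lh+4JLh+4J^2h^2$, which is \eqref{eq:window-action-rate} since $D_*\le L$. Matching the exact constants is bookkeeping; the essential point is a bound $O_J(Lh+h^2)$ that is uniform in the endpoints.

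\textbf{Entropy.} Entropy contraction on both sides of the limit comparison gives
\[
 U_{r+j}(\mu_j)\ge U_N(\bar\mu_{j/J}\mid m)\ge \sigma_\kappa^{(1-j/J)}(D_*)U_N(\bar\mu_0\mid m)+\sigma_\kappa^{(j/J)}(D_*)U_N(\bar\mu_J\mid m).
 \]
The coefficients are nonnegative in the comparison range \eqref{eq:strict-comparison-range}, so we may replace $U_N(\bar\mu_0\mid m)$ by the smaller $U_r(\mu_0)$, and likewise at the other end. Finally we replace $D_*$ by $D$, using $|\sigma_\kappa^{(t)}(D_*)-\sigma_\kappa^{(t)}(D)|\le B_{\kappa,L}\cdot 2h$ together with $U\le1$. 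Summing the two terms gives an error at most $4B_{\kappa,L}h$. Finiteness of $B_{\kappa,L}$ follows from the smoothness of $(t,s)\mapsto\sigma_\kappa^{(t)}(s)$ on the compact set $[0,1]\times[0,L]$; this holds because $\mathfrak s_\kappa$ has no zero in $(0,L]$ under \eqref{eq:strict-comparison-range}, and the apparent singularity at $s=0$ is removable.

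\textbf{Conclusion.} Since $h_n\to0$, also $\widehat h_r\to0$. For fixed $J$ and $\eps>0$, choose $r_0$ with $4(J+1)L\widehat h_r+4J^2\widehat h_r^2\le\eps L^2$ and $4B_{\kappa,L}\widehat h_r\le\eps$ for all $r\ge r_0$. This threshold is independent of the endpoints, which gives Definition~\ref{def:scd}.

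The only delicate points are getting the constant in the action bound and verifying that $B_{\kappa,L}<\infty$ up to $s=0$. The construction itself poses no obstacle, because the recovery uses entropy contraction alone and needs no fixed-endpoint convergence.
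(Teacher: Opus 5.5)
Your proposal is correct and follows the paper's proof essentially step for step. You lift the endpoints by $E_r,E_{r+J}$, project a single $\CDe(K,N)$ comparison geodesic with $T_{r+j}$, and use the per-step bound $D_*/J+2h$ to get $D_*^2+4JD_*h+4J^2h^2\le D^2+4(J+1)Lh+4J^2h^2$; you then transfer the entropy comparison by contraction, nonnegativity of the coefficients and the derivative bound $B_{\kappa,L}$. Your remark about treating the first and last steps separately is unnecessary, since the uniform per-step bound already gives the stated constant, and your check that $B_{\kappa,L}<\infty$, which the paper leaves implicit, is a sound addition.
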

\begin{proof}
Fix $r,J$ and endpoints
$\mu_0\in\Pp(V_r)$,
$\mu_J\in\Pp(V_{r+J})$. Set $h=\widehat h_r$ and lift the endpoints
to $\bar\mu_0=E_r\mu_0$ and $\bar\mu_1=E_{r+J}\mu_J$.
Choose one $\CDe(K,N)$ comparison geodesic $(\bar\mu_t)_{t\in[0,1]}$
between them, and put
\[
 \mu_j=T_{r+j}\bar\mu_{j/J}\quad(1\le j<J),\qquad
 D_*=W_2(\bar\mu_0,\bar\mu_1).
\]
Keep the original endpoint measures and write
$\boldsymbol\mu=(\mu_0,\ldots,\mu_J)$ and $D=W(\mu_0,\mu_J)$.
The comparison inequality ensures that 
every $\bar\mu_t$ has finite entropy.
Lemma~\ref{lem:transfer} gives
$W(\mu_j,\bar\mu_{j/J})\le h$ for $0\le j\le J$, and hence
$|D-D_*|\le2h$. Therefore
\begin{align*}
 W(\mu_j,\mu_{j+1})
 &\le W(\mu_j,\bar\mu_{j/J})
     +W(\bar\mu_{j/J},\bar\mu_{(j+1)/J})\\
 &\qquad+W(\bar\mu_{(j+1)/J},\mu_{j+1})
 \le D_*/J+2h.
\end{align*}
By the definition of the action,
\begin{align*}
 \mathcal A_{r,J}(\boldsymbol\mu)
 &\le J^2(D_*/J+2h)^2
   =D_*^2+4JD_*h+4J^2h^2\\
 &\le D^2+4(J+1)Lh+4J^2h^2,
\end{align*}
where $D_*\le L$ and $|D_*^2-D^2|\le4Lh$.

For $t=j/J$, entropy contraction followed by the comparison along
this same geodesic gives
\begin{align*}
 U_{r+j}(\mu_j)
 &\ge U_N(\bar\mu_t\mid m)\\
 &\ge\sigma_\kappa^{(1-t)}(D_*)U_N(\bar\mu_0\mid m)
      +\sigma_\kappa^{(t)}(D_*)U_N(\bar\mu_1\mid m)\\
 &\ge\sigma_\kappa^{(1-t)}(D_*)U_r(\mu_0)
      +\sigma_\kappa^{(t)}(D_*)U_{r+J}(\mu_J).
\end{align*}
The last inequality uses nonnegativity of the coefficients.
Their uniform derivative bound gives
$|\sigma_\kappa^{(t)}(D_*)-\sigma_\kappa^{(t)}(D)|
\le B_{\kappa,L}|D_*-D|\le2B_{\kappa,L}h$.
Replacing the two coefficients therefore gives a total error at most
$4B_{\kappa,L}h$, since $U_r(\mu_0),U_{r+J}(\mu_J)\le1$.
Fix $J\ge2$ and $\eps>0$. Since $\widehat h_r\to0$, choose
$r_0\ge R$ so that, for every $r\ge r_0$,
\[
 4(J+1)L\widehat h_r+4J^2\widehat h_r^2\le\eps L^2,
 \qquad 4B_{\kappa,L}\widehat h_r\le\eps.
\]
The same section then belongs to
$\mathcal G_{r,J}^{\eps}(\mu_0,\mu_J)$ and satisfies the entropy
comparison at every intermediate layer.
The threshold is independent of the endpoint probabilities,
which proves $\SCD(K,N)$.
\end{proof}

Theorem~\ref{thm:admissible-existence} constructs geometrically
admissible sections from the refinement data alone.
The present recovery also controls entropy, using the comparison
geodesic in the limit.
If a criterion in Section~\ref{sec:limits} gives an endpoint
modulus $\omega_n\le C\ell_n$ on a tail, 
then $h_n\le\omega_n\le C\ell_n$.
This bound can be inserted into
\eqref{eq:defect-bound} and \eqref{eq:window-action-rate}.

\subsection{The limit characterization}

To derive $\CDe(K,N)$ from either $\SCDmid(K,N)$ or the condition $\Delta_n(K,N)\to0$, 
we pass the discrete midpoint comparisons to the limit.
This yields an exact midpoint comparison on $M$, 
which the following lemma extends to all times along a single Wasserstein geodesic.

\begin{lemma}[From midpoint comparisons to the entropic condition]
\label{lem:dyadic}
Suppose every pair of finite-entropy probabilities on $(M,d,m)$
admits a finite-entropy $W_2$-midpoint satisfying the comparison
\eqref{eq:cde} at $t=1/2$.
Then $(M,d,m)$ satisfies $\CDe(K,N)$.
\end{lemma}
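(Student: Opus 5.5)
Fix finite-entropy $\mu_0,\mu_1$ with $D=W_2(\mu_0,\mu_1)$. The plan is to build the comparison geodesic on the dyadic times by repeated midpoints, verify \eqref{eq:cde} there by induction, and then pass to a limit to reach all $t\in[0,1]$.

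\textbf{Dyadic construction.} Set $\mu_{1/2}$ to be the hypothesized finite-entropy midpoint of $\mu_0,\mu_1$. Recursively, for consecutive dyadics $s<t$ at depth $k$ with $\mu_s,\mu_t$ already defined (and of finite entropy), let $\mu_{(s+t)/2}$ be a finite-entropy midpoint of them satisfying the $t=1/2$ comparison. At each depth the consecutive distances sum to $D$. Exactly as in Lemma~\ref{lem:geodesic-limit}, the triangle inequality then gives $W_2(\mu_s,\mu_t)=|s-t|D$ for all dyadic $s,t$. Since $(\Pp(M),W_2)$ is compact (indeed complete), this $D$-Lipschitz map extends uniquely to a constant-speed geodesic $(\mu_t)_{t\in[0,1]}$.

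\textbf{Entropy along dyadics.} Write $u(t)=U_N(\mu_t\mid m)$ and $h(t)=\sigma_\kappa^{(1-t)}(D)u(0)+\sigma_\kappa^{(t)}(D)u(1)$. I would prove $u\ge h$ at every dyadic by induction on depth. Let $s<t$ be consecutive at depth $k$, let $\tau=t-s=2^{-k}$, and let $c=(s+t)/2$. The midpoint comparison gives
\[
u(c)\ge S_\kappa(\tau D)\bigl[u(s)+u(t)\bigr]\ge S_\kappa(\tau D)\bigl[h(s)+h(t)\bigr]=h(c).
\]
Here $W_2(\mu_s,\mu_t)=\tau D$ is used. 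The second inequality uses the induction hypothesis together with $S_\kappa\ge0$, valid in the comparison range since $\tau D\le L$. The last equality is the addition-formula identity behind \eqref{eq:finite-target-recurrence}: $h$ is a combination of $\mathfrak s_\kappa((1-t)D)$ and $\mathfrak s_\kappa(tD)$, and each satisfies $f(c)=S_\kappa(\tau D)(f(s)+f(t))$ via $\sin x+\sin y=2\sin\frac{x+y}2\cos\frac{x-y}2$, with hyperbolic and affine analogues.

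\textbf{Extension to all $t$.} Fix $t\in(0,1)$ and choose dyadics $t_k\to t$, so that $\mu_{t_k}\to\mu_t$ weakly. Relative entropy is lower semicontinuous by the variational formula, so $U_N(\cdot\mid m)$ is upper semicontinuous. Hence
\[
u(t)\ge\limsup_k u(t_k)\ge\lim_k h(t_k)=h(t),
\]
where the last step uses continuity of $\sigma_\kappa^{(t)}(D)$ in $t$. Since $h(t)>0$ whenever one endpoint has positive $u$, this also shows $\mu_t$ has finite entropy. At $t=0,1$ the inequality is trivial.

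\textbf{Main obstacle.} The only delicate point is the coefficient identity in the induction. The midpoint comparison for the pair $(\mu_s,\mu_t)$ must use their own distance $\tau D$, and this must match the recurrence for $h$ built from the global $D$. Exact geodesic distances on the dyadics make this match precise, with no error terms, unlike the discrete Lemma~\ref{lem:finite-entropy-contraction}. I would also check that the positivity of the coefficients required in the induction holds: $S_\kappa(\tau D)>0$ for all $\tau D\le L$ under \eqref{eq:strict-comparison-range}.
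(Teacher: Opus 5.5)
Your proposal is correct and follows the paper's proof essentially step for step: you build the dyadic midpoints with exact distances $2^{-k}D$, extend them by completeness, run the induction on dyadic depth using positivity of $S_\kappa$, and pass to all $t$ by upper semicontinuity of $U_N$. The only cosmetic difference is that you verify the identity $h(c)=S_\kappa(\tau D)[h(s)+h(t)]$ directly from the sum-to-product formulas, whereas the paper obtains the same identity from uniqueness for the boundary-value problem $\phi''+\kappa D^2\phi=0$ on $[s,t]$.
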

\begin{proof}
We first construct a geodesic at dyadic times, then extend both the
metric identity and the entropy comparison to all times.
Fix finite-entropy endpoints $\mu_0,\mu_1$ and write
$D=W_2(\mu_0,\mu_1)$. If $D=0$, the constant curve suffices.
Suppose $D>0$. Select a comparison midpoint at time $1/2$, then
midpoints between the measures at times $0,1/2$ and $1/2,1$, and
continue recursively. The comparison gives a positive value of $U_N$ at each new midpoint.
It therefore has finite entropy and can be used as an endpoint in
the next step.
The construction assigns consistent measures to all dyadic times,
and consecutive measures at depth $k$ have distance $2^{-k}D$.

For dyadic $a<b$, subdivide to a common depth. The chain between $a$
and $b$ has length $(b-a)D$, while the chains from $0$ to $a$ and
from $b$ to $1$ have lengths $aD$ and $(1-b)D$. Consequently
\[
 W_2(\mu_a,\mu_b)\le(b-a)D,\qquad
 D\le aD+W_2(\mu_a,\mu_b)+(1-b)D.
\]
Thus $W_2(\mu_a,\mu_b)=(b-a)D$. For any $t\in[0,1]$, a sequence
of dyadic times $t_k\to t$ gives a Cauchy sequence $\mu_{t_k}$.
Completeness of $(\Pp(M),W_2)$ gives a limit $\mu_t$.
The distance identity makes this limit independent of the chosen
dyadic approximation. Passing to the limit in that identity yields
$W_2(\mu_s,\mu_t)=|s-t|D$ for all $s,t$. Thus the extension is a
constant-speed geodesic.

To propagate the entropy bound, define
\[
 \phi(t)=\sigma_\kappa^{(1-t)}(D)U_N(\mu_0\mid m)
          +\sigma_\kappa^{(t)}(D)U_N(\mu_1\mid m).
\]
It solves $\phi''+\kappa D^2\phi=0$. On any subinterval $[a,b]$,
the comparison function with endpoint values $\phi(a),\phi(b)$
solves the same equation. The boundary-value problem has a unique
solution: this follows from $(b-a)D\sqrt\kappa<\pi$ when
$\kappa>0$, and from the linear or hyperbolic form of the solutions
when $\kappa\le0$. Hence
\[
 \phi((a+b)/2)=S_\kappa((b-a)D)[\phi(a)+\phi(b)].
\]
At the first dyadic level the desired bound is the assumed midpoint
comparison. If it holds at one level, positivity of $S_\kappa$ and
the displayed identity imply it at every newly inserted midpoint.
Thus $U_N(\mu_t\mid m)\ge\phi(t)$ for all dyadic $t$. For arbitrary
$t$, upper semicontinuity and continuity of $\phi$ give
\[
 U_N(\mu_t\mid m)\ge\limsup_kU_N(\mu_{t_k}\mid m)
                         \ge\lim_k\phi(t_k)=\phi(t).
\]
The lower bound is positive, so all measures on the resulting
geodesic have finite entropy. This proves \eqref{eq:cde}.

\end{proof}

\begin{theorem}[Limit characterization]
\label{thm:equivalence}\label{thm:characterization}
Under the assumptions of this section, the following conditions
are equivalent:
\begin{enumerate}
\item $\Sigma$ satisfies $\SCD(K,N)$;
\item $\Sigma$ satisfies $\SCDmid(K,N)$;
\item $\Delta_n(K,N)\to0$;
\item $(M,d,m)$ satisfies $\CDe(K,N)$.
\end{enumerate}
\end{theorem}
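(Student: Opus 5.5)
The equivalences (1)$\Leftrightarrow$(2) and (4)$\Rightarrow$(1), (4)$\Rightarrow$(3) are already in hand: Theorem~\ref{thm:direct-midpoint} gives (1)$\Leftrightarrow$(2); Proposition~\ref{prop:window-recovery}, with couplings $\beta_n$ of support radius $h_n\to0$ from Lemma~\ref{lem:upgrade}, gives (4)$\Rightarrow$(1); and \eqref{eq:defect-bound} from Theorem~\ref{thm:single-recovery} gives (4)$\Rightarrow$(3). It therefore remains to prove (2)$\Rightarrow$(4) and (3)$\Rightarrow$(4). For both, I will verify the hypothesis of Lemma~\ref{lem:dyadic}: every pair of finite-entropy $\mu_0,\mu_1\ll m$ has a finite-entropy $W_2$-midpoint satisfying \eqref{eq:cde} at $t=1/2$.

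\textbf{Discretization and limiting midpoint.} Fix finite-entropy $\mu_0,\mu_1$ and let $D=W_2(\mu_0,\mu_1)$. Under (2), I will take layers $r,r+2$ and discrete endpoints $p_r=T_r\mu_0$ and $q_r=T_{r+2}\mu_1$. Under (3), I will take a single layer $n$ with $p_n=T_n\mu_0$ and $q_n=T_n\mu_1$. By Lemma~\ref{lem:recovery}, $W(p_r,\mu_0)\to0$ and $H(p_r)\to\Ent_m(\mu_0)$, and similarly for $q$, so $U(p_r)\to U_N(\mu_0\mid m)$ and $W(p_r,q_r)\to D$. For $\eps_r\downarrow0$, condition (2) or (3) supplies $\nu_r$ with action excess at most $\eps_rL^2$ and
\[
U(\nu_r)\ge S_\kappa(W(p_r,q_r))\bigl[U(p_r)+U(q_r)\bigr]-\eps_r.
\]
For (3), this is the definition of $\Delta_n$ with $\eps_n\ge\Delta_n$, used along a subsequence. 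Since $Z$ is compact (Lemma~\ref{lem:compact}), a subsequence gives $\nu_r\rightharpoonup\bar\nu$ in $W$. The measure $\bar\nu$ is concentrated on $M$ because $d_H^Z(V_n,M)\to0$.

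\textbf{Midpoint property.} By Lemma~\ref{lem:midpoint-defect}, $W(p_r,\nu_r)$ and $W(\nu_r,q_r)$ lie within $L\sqrt{\eps_r}/2$ of $W(p_r,q_r)/2$. Passing to the limit gives $W_2(\mu_0,\bar\nu)=W_2(\bar\nu,\mu_1)=D/2$, since $W$ on $M$ agrees with $W_2$ for $d$. Hence $\bar\nu$ is a midpoint.

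\textbf{Entropy comparison (main step).} I expect the entropy step to be the main obstacle, though joint upper semicontinuity of $U_N(\cdot\mid\cdot)$ should handle it. The layer reference measures vary, $m_r\rightharpoonup m$ (and $m_{r+1}\rightharpoonup m$), and $U_{r+1}(\nu_r)=U_N(\nu_r\mid m_{r+1})$. Therefore
\[
U_N(\bar\nu\mid m)\ge\limsup_r U_{r+1}(\nu_r)\ge S_\kappa(D)\bigl[U_N(\mu_0\mid m)+U_N(\mu_1\mid m)\bigr],
\]
using continuity of $S_\kappa$ on $[0,L]$. The right side is positive, so $\bar\nu$ has finite entropy. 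The same argument handles a single layer under (3). One subtle point needs checking: the lower bound must come from recovery of the \emph{fixed} endpoints, via Lemma~\ref{lem:recovery}'s entropy convergence, rather than from the contraction inequality alone, which only gives $U(p_r)\ge U_N(\mu_0\mid m)$. That inequality already suffices, since only a lower bound on the right side is needed. Lemma~\ref{lem:dyadic} then yields $\CDe(K,N)$, closing the cycle (2)$\Rightarrow$(4)$\Rightarrow$(1)$\Rightarrow$(2) and (3)$\Rightarrow$(4)$\Rightarrow$(3).
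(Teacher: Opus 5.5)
Your proposal is correct and follows essentially the paper's own proof. It obtains (1)$\Leftrightarrow$(2) from Theorem~\ref{thm:direct-midpoint}, and (4)$\Rightarrow$(1) and (4)$\Rightarrow$(3) from Proposition~\ref{prop:window-recovery} and \eqref{eq:defect-bound}; for (2),(3)$\Rightarrow$(4) it recovers fixed endpoints via $T_n$, extracts a limit of the approximate midpoints in compact $Z$, applies joint upper semicontinuity of $U_N$, and closes with Lemma~\ref{lem:dyadic}. The only deviations are cosmetic. You obtain $a=b=D/2$ from Lemma~\ref{lem:midpoint-defect} instead of the paper's squeeze $D^2\le(a+b)^2\le 2(a^2+b^2)\le D^2$. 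You also observe, correctly though awkwardly phrased, that the contraction bound $U(p_r)\ge U_N(\mu_0\mid m)$ alone gives the entropy lower bound, so the entropy-convergence part of Lemma~\ref{lem:recovery} used in the paper is not actually needed.
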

\begin{proof}
Theorem~\ref{thm:direct-midpoint} gives (1)$\Leftrightarrow$(2)
without any convergence assumption. We prove
(2)$\Rightarrow$(4) and (3)$\Rightarrow$(4) by recovering fixed
endpoints and passing midpoint comparisons to the limit.
Then uniform recovery gives (4)$\Rightarrow$(3) and
(4)$\Rightarrow$(1).
Fix reference couplings with support radii $h_n\to0$ as in
\eqref{eq:winfty-convergence}, and let $E_n,T_n$ be their transfer maps.

Assume (2) and fix finite-entropy endpoints $\mu_0,\mu_1$ on $M$.
Let $\eps_i\downarrow0$ and choose increasing
$r_i\to\infty$ beyond the midpoint threshold for $\eps_i$. Put
\[
 p_i=T_{r_i}\mu_0,\qquad q_i=T_{r_i+2}\mu_1,
 \qquad D_i=W(p_i,q_i).
\]
Lemma~\ref{lem:recovery} gives
\begin{gather*}
 p_i\longrightarrow\mu_0,\quad q_i\longrightarrow\mu_1
 \quad\text{in }W_2,\qquad D_i\longrightarrow D:=W_2(\mu_0,\mu_1),\\
 U_{r_i}(p_i)\longrightarrow U_N(\mu_0\mid m),\qquad
 U_{r_i+2}(q_i)\longrightarrow U_N(\mu_1\mid m).
\end{gather*}
The midpoint condition gives an $\eps_i$-admissible midpoint
$\nu_i\in\Pp(V_{r_i+1})$ with
\begin{align*}
 2W(p_i,\nu_i)^2+2W(\nu_i,q_i)^2
 &\le D_i^2+\eps_iL^2,\\
 U_{r_i+1}(\nu_i)
 &\ge S_\kappa(D_i)[U_{r_i}(p_i)+U_{r_i+2}(q_i)]-\eps_i.
\end{align*}
Compactness of $Z_{\ge R}$ gives a subsequence converging in $W_2$
to a probability $\nu$. It is supported on $M$: indeed
\[
 \int d_Z(z,M)^2\,\dd\nu_i(z)\le a_{r_i+1}^2\longrightarrow0,
\]
and the integrand is continuous on $Z$. Passing to the limit in the
action inequality gives
\[
 2W_2(\mu_0,\nu)^2+2W_2(\nu,\mu_1)^2\le D^2.
\]
Weak convergence of $m_{r_i+1}$ to $m$ and joint upper
semicontinuity of $U_N$ give
\begin{align*}
 U_N(\nu\mid m)
 &\ge\limsup_i U_{r_i+1}(\nu_i)\\
 &\ge S_\kappa(D)[U_N(\mu_0\mid m)+U_N(\mu_1\mid m)].
\end{align*}
Write $a=W_2(\mu_0,\nu)$ and $b=W_2(\nu,\mu_1)$. The action bound
and the triangle inequality imply
\[
 D^2\le(a+b)^2\le2(a^2+b^2)\le D^2.
\]
All inequalities are equalities. Since
$2(a^2+b^2)-(a+b)^2=(a-b)^2$, this forces $a=b=D/2$.
The entropy lower bound is positive, including when $D=0$, so $\nu$
is a finite-entropy comparison midpoint. Lemma~\ref{lem:dyadic}
proves (4).

Assume (3), and again fix finite-entropy endpoints $\mu_0,\mu_1$
on $M$. Recover them on the same layer as
$p_n=T_n\mu_0$ and $q_n=T_n\mu_1$. A measure $\nu_n$ attaining the inner minimum in
\eqref{eq:single-defect} satisfies
\begin{align*}
 2W_n(p_n,\nu_n)^2+2W_n(\nu_n,q_n)^2
 &\le W_n(p_n,q_n)^2+L^2\Delta_n,\\
 U_n(\nu_n)&\ge
 S_\kappa(W_n(p_n,q_n))[U_n(p_n)+U_n(q_n)]-\Delta_n.
\end{align*}
Both errors tend to zero. The preceding compactness and
semicontinuity argument gives a limit midpoint satisfying the
entropy comparison. Lemma~\ref{lem:dyadic} then gives (4).

Finally, assume (4). For the fixed reference couplings,
Theorem~\ref{thm:single-recovery} and \eqref{eq:defect-bound}
give $\Delta_n(K,N)\to0$, proving (3).
Proposition~\ref{prop:window-recovery} gives an admissible section
with all entropy comparisons on each sufficiently late segment
of every fixed length, proving (1).
\end{proof}

Convergence thus relates the geometry of the refinement sequence to that of its limit.
Under the standing assumptions, 
the sequence satisfies $\SCD(K,N)$ if and only if its limit satisfies $\CDe(K,N)$.
If, in addition, $N>1$ and the limit is essentially nonbranching, 
these conditions are also equivalent to the classical $\mathrm{CD}(K,N)$ condition, 
as shown below.
\begin{corollary}[Classical CD under essential nonbranching]
\label{cor:classical-cd}
Under the hypotheses of Theorem~\ref{thm:characterization}, assume
in addition that $N>1$ and that $(M,d,m)$ is essentially nonbranching. Then the four conditions
in that theorem are also equivalent to the classical condition
$\mathrm{CD}(K,N)$, with the same parameters.
\end{corollary}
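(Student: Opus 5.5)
The corollary asks only for one more equivalence, $\CDe(K,N)\Leftrightarrow\mathrm{CD}(K,N)$ on the limit $(M,d,m)$. Once we have it, Theorem~\ref{thm:characterization} links it to the other three conditions. So the plan is to work entirely on the limit space. We route the argument through the reduced condition $\mathrm{CD}^*(K,N)$ of Bacher--Sturm and cite the two external results named in the introduction. First we check that their hypotheses hold. By Lemma~\ref{lem:compact}, Lemma~\ref{lem:geodesic-limit} and Definition~\ref{def:convergence}, $(M,d)$ is a compact geodesic metric space. The measure $m$ is a Borel probability measure with $\supp m=M$, so it is finite, locally finite and fully supported. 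Thus $(M,d,m)$ is a legitimate metric measure space in the sense of \cite{EKS2015} and \cite{CavallettiMilman2021}. Essential nonbranching is assumed, and $1<N<\infty$.

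\emph{Step 1: $\CDe(K,N)\Leftrightarrow\mathrm{CD}^*(K,N)$.} We invoke \cite[Theorem~3.12]{EKS2015}, which states this equivalence for essentially nonbranching spaces. We must check that Definition~\ref{def:cde} coincides with the EKS definition. That definition asks for a geodesic with $U_N(\mu_t\mid m)\ge\sigma^{(1-t)}_{K/N}(D)U_N(\mu_0\mid m)+\sigma^{(t)}_{K/N}(D)U_N(\mu_1\mid m)$ between finite-entropy endpoints. In compact spaces, endpoints with bounded support need no further restriction. Since $m(M)=1$, the normalization of the relative entropy agrees with $\Ent_m$ as used here. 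For $K>0$, the comparison range $KL^2<N\pi^2$ together with $\diam M\le L$ ensures that the coefficients are finite on all pairs. Hence no convention about $D\ge\pi\sqrt{N/K}$ is needed.

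\emph{Step 2: $\mathrm{CD}^*(K,N)\Leftrightarrow\mathrm{CD}(K,N)$.} The implication $\mathrm{CD}(K,N)\Rightarrow\mathrm{CD}^*(K,N)$ holds in general by Bacher--Sturm \cite{BacherSturm2010}, since the distortion coefficients satisfy $\tau^{(t)}_{K,N}\ge\sigma^{(t)}_{K/N}$. The converse is the globalization theorem \cite[Main Theorem~1.1]{CavallettiMilman2021}. It says that an essentially nonbranching space satisfying $\mathrm{CD}^*(K,N)$, equivalently $\mathrm{CD}_{\mathrm{loc}}(K,N)$, with finite $m$ and $N>1$ satisfies $\mathrm{CD}(K,N)$. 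Finiteness of $m$ holds because $m$ is a probability measure.

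We expect the main obstacle to be purely one of bookkeeping: matching the definitions and hypotheses across the three sources. This includes the condition $N>1$, which Cavalletti--Milman require; the case $N=1$ is excluded in the statement. It also includes checking that the essential nonbranching notions in \cite{EKS2015} and \cite{CavallettiMilman2021} agree, and that the parameter $N$ plays the same role in $U_N$ and in the $\mathrm{CD}$ coefficients. Once these identifications are made, chaining Step~1, Step~2 and Theorem~\ref{thm:characterization} gives the corollary with the same $(K,N)$.
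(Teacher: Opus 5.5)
Your proposal is correct and follows the paper's own route. The paper also uses \cite[Theorem~3.12]{EKS2015} to get $\CDe(K,N)\Leftrightarrow\mathrm{CD}^{*}(K,N)$ under essential nonbranching, then \cite[Main Theorem~1.1]{CavallettiMilman2021} for $\mathrm{CD}^{*}(K,N)\Leftrightarrow\mathrm{CD}(K,N)$ because $m$ is a probability measure. Like you, it then concludes via Theorem~\ref{thm:characterization}; your extra checks of the hypotheses and definitions are sound and go beyond what the paper records.
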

\begin{proof}
Essential nonbranching means that every optimal dynamical transport
plan between absolutely continuous probabilities is concentrated on
a nonbranching set of geodesics. In this setting,
Erbar--Kuwada--Sturm \cite[Theorem~3.12]{EKS2015} prove
$\CDe(K,N)\Longleftrightarrow\mathrm{CD}^{*}(K,N)$.
Since $m$ is a probability measure, the globalization theorem of
Cavalletti--Milman \cite[Main Theorem~1.1]{CavallettiMilman2021}
gives $\mathrm{CD}^{*}(K,N)\Longleftrightarrow\mathrm{CD}(K,N)$.
The conclusion follows from Theorem~\ref{thm:characterization}.
\end{proof}

\section{Geometric and entropy estimates}
\label{sec:quantitative-recovery}

Section~\ref{sec:characterization} relates the SCD condition to
vanishing midpoint defects on individual layers.
We now estimate these defects in terms of geometric approximation
and reference mass accuracy, and determine the transport scales
on which the relative errors vanish.

We retain the standing assumptions and notation of
Section~\ref{sec:characterization}, and assume that the limit
$(M,d,m)$ satisfies $\CDe(K,N)$.
Thus $\Sigma$ is a complete refinement system satisfying
\eqref{eq:convergence}, with $1\le N<\infty$ and $\kappa=K/N$.
All layer indices satisfy $n\ge R$, and the diameter and comparison
bounds are those fixed in Section~\ref{sec:entropy}.
We write $U_N(\mu)=U_N(\mu\mid m)$ and retain
\[
 a_n=d_H^Z(V_n,M),\qquad h_n=W_{\infty,Z}(m_n,m).
\]
Distances between layers and the limit are computed with $d_Z$.
Compactness ensures that the infimum defining $h_n$ is attained.
All balls below are open.

\subsection{Ball masses and reference-measure approximation}

The limiting curvature condition gives a uniform lower bound on ball
masses. This makes the coupling estimate of Lemma~\ref{lem:upgrade}
quantitative. Set
\begin{equation}
 c_{K,N,L}=\begin{cases}
 1,&K\geq0,\\[1mm]
 \displaystyle\frac{L\sqrt{-K/N}}{\sinh(L\sqrt{-K/N})},&K<0.
 \end{cases}
 \label{eq:mass-growth-constant}
\end{equation}
Then $0<c_{K,N,L}\leq1$.

\begin{theorem}[A lower bound on ball masses]
\label{thm:ball-mass}
For every $x\in M$ and $0<r\leq L$,
\begin{equation}
 m(B_r(x))\geq
 \left(c_{K,N,L}\frac rL\right)^N.
 \label{eq:automatic-ball-mass}
\end{equation}
The lower mass bound therefore follows from the limiting curvature
condition and requires no additional assumption on the refinement.
\end{theorem}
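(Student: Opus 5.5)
The plan is a Bishop--Gromov type argument obtained from $\CDe(K,N)$ by contracting the whole space towards a point. Fix $x\in M$ and $0<r\le L$. Take $\mu_1=m$, which has zero entropy, so $U_N(\mu_1\mid m)=1$. For the other endpoint take $\mu_0^{\epsilon}=m|_{B_\epsilon(x)}/m(B_\epsilon(x))$; its entropy is $-\log m(B_\epsilon(x))$, finite by full support, so $U_N(\mu_0^\epsilon\mid m)=m(B_\epsilon(x))^{1/N}\ge0$. Apply \eqref{eq:cde} to the $\CDe(K,N)$ geodesic $(\mu_t)$ between $\mu_0^\epsilon$ and $m$, and discard the nonnegative first term. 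This gives
\[
 U_N(\mu_t\mid m)\ge\sigma_\kappa^{(t)}(D)\ge\inf_{0\le D\le L}\sigma_\kappa^{(t)}(D).
\]

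The next step localizes $\mu_t$ near $x$. Since $W_2$-geodesics in $\Pp(M)$ lift to dynamical optimal plans on the geodesic space $M$ (Lemma~\ref{lem:geodesic-limit}), the measure $\mu_t$ is carried by points $\gamma_t$ with $d(\gamma_0,\gamma_t)=t\,d(\gamma_0,\gamma_1)\le tL$. Hence $\supp\mu_t\subset \overline B_{\epsilon+tL}(x)$. If lifting is to be avoided, the same containment should follow from a discrete dyadic midpoint argument plus compactness; I would state the lifting via standard results on compact geodesic spaces. Jensen's inequality on the support set $A$ gives $\Ent_m(\mu_t)\ge-\log m(A)$, so $m(A)\ge U_N(\mu_t\mid m)^N$. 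Choosing $t=r/L$ slightly decreased and letting $\epsilon\to0$, I expect to obtain $m(B_r(x))\ge\bigl(\inf_D\sigma_\kappa^{(r/L)}(D)\bigr)^N$. The open-ball issue is handled by using $t<r/L$ and a limit, which needs continuity in $t$ of the lower bound.

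It remains to bound the coefficient. For $K\ge0$, $\mathfrak s_\kappa$ is concave on $[0,\pi/\sqrt\kappa]$, so $\sigma_\kappa^{(t)}(D)\ge t$, and the bound is $(r/L)^N$ with $c=1$. For $K<0$, set $\lambda=\sqrt{-\kappa}$. Then
\[
 \sigma^{(t)}(D)=\frac{\sinh(t\lambda D)}{\sinh(\lambda D)}\ge\frac{t\lambda D}{\sinh(\lambda D)},
\]
which is decreasing in $D$, hence at least $t\,\lambda L/\sinh(\lambda L)=t\,c_{K,N,L}$. With $t=r/L$ this gives \eqref{eq:automatic-ball-mass}.

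The main obstacle is the localization of $\mu_t$ in a ball of radius $\epsilon+tL$. In a general compact geodesic space this requires the superposition principle for Wasserstein geodesics, which lifts them to measures on geodesics. I would invoke it directly (e.g.\ Lisini/Villani). The rest is elementary.
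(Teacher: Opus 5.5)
Your proposal is correct and follows the paper's proof essentially step for step: the same test pair $\mu_0=m|_{B_\eps(x)}/m(B_\eps(x))$, $\mu_1=m$, the same coefficient bounds ($\sigma_\kappa^{(t)}(D)\ge t$ by concavity when $\kappa>0$, and $\sinh(ta)\ge ta$ with $a/\sinh a$ decreasing when $\kappa<0$), the same Jensen bound $U_N(\mu_t\mid m)\le m(B)^{1/N}$ on a set $B$ carrying $\mu_t$, and the same limit $t\uparrow r/L$ with $\eps+tL<r$. The only difference is the localization step you flagged as the main obstacle. Instead of invoking the superposition principle, the paper glues optimal couplings of $(\mu_0,\mu_t)$ and $(\mu_t,\mu_1)$ into a law on triples $(u,z,v)$. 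The chain $D\le\|d(u,v)\|_2\le\|d(u,z)+d(z,v)\|_2\le tD+(1-t)D=D$ then forces $d(u,z)=t\,d(u,v)\le tL$ almost surely. This is self-contained and needs neither a lifting theorem nor the geodesicity of $M$.
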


\begin{proof}
We first record a localization property of Wasserstein geodesics.
Let $(\mu_s)_{0\leq s\leq1}$ be a constant-speed $W_2$ geodesic on $M$.
Write $D=W_2(\mu_0,\mu_1)$ and fix $t\in(0,1)$.
Glue optimal couplings between times $0,t$ and $t,1$ to obtain a
probability measure on triples $(u,z,v)$. On this space put
$a=d(u,z)$, $b=d(z,v)$ and $c=d(u,v)$. The endpoint marginal is a
coupling of $\mu_0$ and $\mu_1$, while the two chosen couplings are
optimal. Consequently,
\[
 D\leq\|c\|_2\leq\|a+b\|_2
 \leq\|a\|_2+\|b\|_2=tD+(1-t)D=D.
\]
Equality holds throughout. Since $0\leq c\leq a+b$, equality
of the $L^2$ norms gives $c=a+b$ almost surely. If $D>0$, equality
in the $L^2$ triangle inequality makes $a$ and $b$ proportional;
their norms $tD$ and $(1-t)D$ therefore give
\[
 a=tc,\qquad b=(1-t)c\qquad\text{almost surely}.
\]
If $D=0$, all three quantities vanish. In either case $d(u,z)\leq tL$.
Thus, if $\mu_0$ is concentrated on $B_\eps(x)$, then
$d(x,z)\leq d(x,u)+d(u,z)<\eps+tL$ almost surely, and $\mu_t$
is concentrated on $B_{\eps+tL}(x)$.

Now fix $0<t<r/L$ and choose $\eps>0$ with $\eps+tL<r$.
Full support gives $m(B_\eps(x))>0$, so
\[
 \mu_0=\frac{m|_{B_\eps(x)}}{m(B_\eps(x))},\qquad \mu_1=m
\]
are finite-entropy probabilities. Indeed,
$\Ent_m(\mu_0)=-\log m(B_\eps(x))$ and $\Ent_m(m)=0$.
Choose a comparison geodesic supplied by $\CDe(K,N)$. Since
$U_N(m)=1$ and the other endpoint term is nonnegative, its entropy
inequality gives
\[
 U_N(\mu_t)\geq\sigma_\kappa^{(t)}(D),
 \qquad D=W_2(\mu_0,m)\leq L.
\]
For $\kappa>0$, put $a=\sqrt\kappa D<\pi$.
Concavity of sine on $[0,a]$ gives $\sin(ta)\geq t\sin a$,
hence $\sigma_\kappa^{(t)}(D)\geq t$, with its continuous value at
$D=0$. For $\kappa=0$ equality holds.
For $\kappa<0$, put $a=\sqrt{-\kappa}D$ and
$A=\sqrt{-\kappa}L$. The inequalities $\sinh(ta)\geq ta$ and
$a/\sinh a\geq A/\sinh A$ give
\[
 \sigma_\kappa^{(t)}(D)=\frac{\sinh(ta)}{\sinh a}
 \geq t\frac a{\sinh a}
 \geq t\frac A{\sinh A}=c_{K,N,L}t.
\]
Here $a/\sinh a$ is decreasing on $[0,\infty)$, since
$\sinh a-a\cosh a\leq0$; values at zero are understood by continuity.

The localization property shows that $\mu_t$ is concentrated on $B_r(x)$.
Its entropy power is bounded above by the $N$th root of the mass
of this ball.
For any probability $\nu$ concentrated on a measurable set $B$ with
$m(B)>0$, comparison with $m_B=m|_B/m(B)$ gives
\[
 \Ent_m(\nu)=\Ent_{m_B}(\nu)-\log m(B)\geq-\log m(B),
 \qquad U_N(\nu)\leq m(B)^{1/N}.
\]
The inequality follows from Jensen's inequality for $s\log s$;
if $\nu$ is not absolutely continuous, its entropy is infinite and
its entropy power is zero. We conclude that
$m(B_r(x))^{1/N}\geq c_{K,N,L}t$. Letting $t\uparrow r/L$
proves the bound for the open ball itself.
\end{proof}

\begin{theorem}[A rate from Hausdorff and Prokhorov approximation]
\label{thm:prokhorov-rate}
Let
\[
 \mathfrak p_n=d_{\mathrm{Pr},\,d_Z/L}(m_n,m)
\]
be the Prokhorov distance in the normalized intrinsic bridge metric.
Then $0\leq\mathfrak p_n\leq1$, and
\begin{equation}
 h_n\leq\min\left\{L,\;
 4a_n+\frac{8L}{c_{K,N,L}}\mathfrak p_n^{1/N}\right\}.
 \label{eq:prokhorov-rate}
\end{equation}
The sum on the right may be replaced by
$\max\{4a_n,8L\mathfrak p_n^{1/N}/c_{K,N,L}\}$.
\end{theorem}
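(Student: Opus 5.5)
The plan is to feed the ball-mass bound of Theorem~\ref{thm:ball-mass} into the quantitative criterion of Lemma~\ref{lem:upgrade}, applied in $Y=Z_{\ge R}$ with $\mu=m_n$, $X=V_n$. First the trivial parts. The normalized metric $d_Z/L$ has diameter at most $1$ on $Z_{\ge R}$, so any Prokhorov distance is at most $1$. Moreover $h_n\le L$, since the product coupling $m_n\otimes m$ is supported in $Z_{\ge R}\times M\subset Z_{\ge R}^2$, whose diameter is at most $L$. So it suffices to show $h_n\le L\eps$ for a suitable $\eps$. We may assume $\eps<1$, because otherwise the bound $L$ already wins. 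The limit $M$ is connected because it is geodesic (Lemma~\ref{lem:geodesic-limit}), and $\supp m=M$, so Lemma~\ref{lem:upgrade} applies.

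Next, we lower-bound $b(s)=\inf_{z\in M}m(B_{Ls}(z))$. By Theorem~\ref{thm:ball-mass} with $r=Ls\le L$,
\[
 b(s)\ge (c_{K,N,L}s)^N .
\]
Hence $b(\eps/8)\ge (c_{K,N,L}\eps/8)^N$. The threshold \eqref{eq:upgrade-threshold} then holds as soon as
\[
 a_n/L<\eps/4,\qquad \mathfrak p_n<\eps/4,\qquad \mathfrak p_n<(c_{K,N,L}\eps/8)^N .
\]
Since $c_{K,N,L}\le1$, $\eps\le1$ and $N\ge1$, we have $(c\eps/8)^N\le c\eps/8<\eps/4$. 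So the binding Prokhorov condition is the last one, i.e.\ $\eps>8\mathfrak p_n^{1/N}/c_{K,N,L}$.

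To conclude, we take any
\[
 \eps>\max\{4a_n/L,\;8\mathfrak p_n^{1/N}/c_{K,N,L}\}.
\]
All strict inequalities then hold, and Lemma~\ref{lem:upgrade} yields $h_n\le L\eps$. Letting $\eps$ decrease to this maximum gives $h_n\le\max\{4a_n,8L\mathfrak p_n^{1/N}/c_{K,N,L}\}$, which is the refined form. It is at most the sum, and combined with $h_n\le L$ this proves \eqref{eq:prokhorov-rate}.

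The main obstacle is the edge case where the maximum is $0$, or where the required $\eps$ would be $\ge1$. In the first case we take arbitrary small $\eps>0$ and conclude $h_n=0$. In the second case we simply invoke the trivial bound $h_n\le L$. We also need to check that Lemma~\ref{lem:upgrade} allows $\eps$ up to $1$ without a hidden restriction; it appears not to need one beyond the stated thresholds.
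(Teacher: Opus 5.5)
Your proposal is correct and follows the paper's proof essentially step by step. You feed the ball-mass bound $b(s)\ge(c_{K,N,L}s)^N$ into Lemma~\ref{lem:upgrade} on $Z_{\ge R}$, use the trivial bound $h_n\le L$ when the threshold $\max\{4a_n/L,8\mathfrak p_n^{1/N}/c_{K,N,L}\}$ is at least $1$, and otherwise let $\eps$ decrease to that threshold. The only cosmetic difference is that you obtain $\mathfrak p_n<\eps/4$ from $(c_{K,N,L}\eps/8)^N\le c_{K,N,L}\eps/8$, whereas the paper uses $\mathfrak p_n\le\mathfrak p_n^{1/N}$; both are valid.
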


\begin{proof}
Normalize the metric by $L$ and set $\alpha_n=a_n/L$ and
$c=c_{K,N,L}$. Theorem~\ref{thm:ball-mass} gives
\[
 b(s):=\inf_{z\in M}m(B_{sL}(z))\geq(cs)^N,
 \qquad 0<s\leq1.
\]
The limit is connected by Lemma~\ref{lem:geodesic-limit}.
Put
\[
 T_n=\max\{4\alpha_n,8\mathfrak p_n^{1/N}/c\}.
\]
If $T_n\geq1$, the bound $h_n\leq L$ follows from the tail diameter.
Otherwise, choose $T_n<\eps\leq1$.
Then $\alpha_n<\eps/4$ and
\[
 \mathfrak p_n<(c\eps/8)^N\leq b(\eps/8).
\]
Since $N\geq1$, $c\leq1$ and $0\leq\mathfrak p_n\leq1$, also
\[
 \mathfrak p_n\leq\mathfrak p_n^{1/N}<c\eps/8<\eps/4.
\]
Lemma~\ref{lem:upgrade}, applied in $Z_{\ge R}$, gives
$h_n/L\leq\eps$. Letting $\eps\downarrow T_n$ proves the bound
with the maximum, including when $T_n=0$.
The displayed estimate follows by replacing the maximum by the sum.
\end{proof}

\subsection{Geometric resolution and mass accuracy}

The local geometric scale $\ell_n=\max_xs_n(x)$ was fixed in
Section~\ref{sec:refinement}. To estimate approximation to the limit,
we introduce a separate radius $\rho_n$ through a measurable partition
of $M$. The radius controls the distance from each cell to its layer
element; the cell masses are then compared with the reference masses.

\begin{theorem}[A rate from resolution and mass discrepancy]
\label{thm:cell-rate}
Suppose that, for each $n$, there are a radius $\rho_n\geq0$
and a measurable partition $\{A_{n,x}:x\in V_n\}$ of $M$,
up to null sets, such that
\begin{equation}
 w_{n,x}:=m(A_{n,x})>0,\qquad
 d_Z(x,z)\leq\rho_n\quad\text{for $m$-almost every }z\in A_{n,x}.
 \label{eq:cell-certificate}
\end{equation}
Define the mass discrepancy
\begin{equation}
 e_n=\frac12\sum_{x\in V_n}|m_n(x)-w_{n,x}|.
 \label{eq:cell-mass-discrepancy}
\end{equation}
Then
\begin{equation}
 h_n\leq\min\left\{L,\;
 \rho_n+\frac{4L}{c_{K,N,L}}e_n^{1/N}\right\}.
 \label{eq:cell-rate}
\end{equation}
If $m_n(x)=w_{n,x}$ for every $x$, then $h_n\leq\rho_n$.
\end{theorem}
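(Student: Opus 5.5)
The approach is to build an explicit coupling of $m_n$ and $m$ in two stages. The first stage is the partition coupling, which moves each cell $A_{n,x}$ to $x$ with displacement at most $\rho_n$ but has the wrong first marginal $w_n=(w_{n,x})_x$. The second stage corrects the mass discrepancy $e_n$ inside $M$, using the ball-mass bound of Theorem~\ref{thm:ball-mass} together with the Hall criterion of Lemma~\ref{lem:hall}. The bound $h_n\le L$ is immediate from $\diam Z_{\ge R}\le L$, so we may assume $\rho_n+\frac{4L}{c}e_n^{1/N}<L$, where $c=c_{K,N,L}$. If $m_n(x)=w_{n,x}$ for all $x$, the coupling
\[
\beta=\sum_x \delta_x\otimes m|_{A_{n,x}}
\]
lies in $\Pi(m_n,m)$ and is supported on $d_Z(x,z)\le\rho_n$, which gives $h_n\le\rho_n$.

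In general, set $r=\frac{4L}{c}e_n^{1/N}$; for $e_n>0$ this is positive, and by the reduction above $r<L$. The plan is to verify Hall's condition \eqref{eq:hall} for $\mu=m_n$ with radius $\rho_n+r$. Fix $A\subset V_n$ and write $A_*=\bigcup_{x\in A}A_{n,x}$. Then $m_n(A)\le w_n(A)+e_n=m(A_*)+e_n$, since $e_n$ is the total variation distance between $m_n$ and $w_n$. Every point of $A_*$ lies within $\rho_n$ of $A$, up to a null set. So it suffices to show that the $r$-neighbourhood $A_*^r$ (restricted to $M$) satisfies $m(A_*^r)\ge m(A_*)+e_n$. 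Since $A_*^r$ is contained in the union of the closed $(\rho_n+r)$-balls around $A$, this gives Hall's condition.

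The main step is to find this extra mass. If $m(A_*^r)=1$ there is nothing to prove, because $m_n(A)\le 1$. Otherwise there is a point of $M$ at distance $\ge r$ from $A_*$. Connectedness of $M$ (Lemma~\ref{lem:geodesic-limit}) then gives, as in the proof of Lemma~\ref{lem:upgrade}, a point $z_0\in M$ with $d(z_0,A_*)=r/2$. The ball $B_{r/2}(z_0)$ lies in $A_*^r$ and is disjoint from $A_*$, so
\[
m(A_*^r)\ge m(A_*)+m(B_{r/2}(z_0)).
\]
Theorem~\ref{thm:ball-mass} gives
\[
m(B_{r/2}(z_0))\ge \bigl(c\,r/(2L)\bigr)^N=2^Ne_n\ge e_n,
\]
which closes Hall's condition. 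One point needs care: $A_*$ is only defined up to null sets. I would handle this by replacing $A_*$ with the support of $m|_{A_*}$, so that the $\rho_n$-bound holds pointwise on a closed set; alternatively, one can work with the closed set $\overline{A}^{\rho_n}\cap M$ directly. The constant $4$ leaves slack for the open/closed ball technicalities.

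Lemma~\ref{lem:hall} then yields a coupling in $\Pi(m_n,m)$ with displacement at most $\rho_n+r$. Since $h_n$ is attained as an infimum, this gives $h_n\le\rho_n+\frac{4L}{c}e_n^{1/N}$. The case $e_n=0$ is covered either by the exact construction above or by letting $r\downarrow0$. Combining with $h_n\le L$ proves \eqref{eq:cell-rate}. I expect the main obstacle to be the measure-theoretic bookkeeping for the cells modulo null sets, not the mass estimate itself.
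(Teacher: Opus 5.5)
Your proof is correct and follows the paper's own argument. Both use Hall's criterion (Lemma~\ref{lem:hall}) at radius $\rho_n+r$, the total-variation bound $m_n(A)\le m(A_*)+e_n$, connectedness of $M$ to place a ball in the annulus, and Theorem~\ref{thm:ball-mass} to show that this ball carries mass at least $e_n$. The differences are cosmetic. The paper measures distance to the finite set $A$ itself, so the sublevel set $\{d_Z(\cdot,A)\le\rho_n\}$ is automatically closed and your passage to $\supp(m|_{A_*})$ is not needed. It also uses a ball of radius $r/4$ where your radius $r/2$ works equally well.
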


\begin{proof}
If $e_n=0$, then $m_n(x)=w_{n,x}$ for every $x$, and
\[
 \beta_n=\sum_{x\in V_n}\delta_x\otimes(m|_{A_{n,x}})
\]
has first marginal $m_n$, second marginal $m$, and displacement at
most $\rho_n$. This proves the exact-mass case.

Suppose $e_n>0$ and put $r=4Le_n^{1/N}/c_{K,N,L}$.
If $\rho_n+r\geq L$, the conclusion follows from $h_n\leq L$;
assume henceforth that $\rho_n+r<L$.
For nonempty $A\subset V_n$ let $S_A=\bigcup_{x\in A}A_{n,x}$.
The definition of total variation gives
\begin{equation}
 m_n(A)\leq\sum_{x\in A}w_{n,x}+e_n=m(S_A)+e_n.
 \label{eq:cell-hall-start}
\end{equation}
We verify Hall's inequality by finding enough mass in the
$(\rho_n+r)$-neighborhood of $A$ to absorb this discrepancy.
Here $A^s=\{z\in Z:d_Z(z,A)<s\}$.
If $M\subset A^{\rho_n+r}$, then $m(A^{\rho_n+r})=1$ and the
inequality holds. Otherwise, the continuous function $f(z)=d_Z(z,A)$
on the compact connected space $M$ has maximum at least $\rho_n+r$.
Its minimum is at most $\rho_n$: choose $x\in A$ and a point of
its positive-mass cell where \eqref{eq:cell-certificate} holds.
Connectedness therefore gives $z_0\in M$ with
$f(z_0)=\rho_n+r/2$.

For $z\in B_{r/4}(z_0)$, the $1$-Lipschitz property of $f$ gives
\[
 \rho_n+r/4<f(z)<\rho_n+3r/4<\rho_n+r.
\]
Hence $B_{r/4}(z_0)\subset A^{\rho_n+r}$.
Since $f\leq\rho_n$ almost everywhere on $S_A$, the ball is
disjoint from $S_A$ up to a null set.
Since $r/4<L$, Theorem~\ref{thm:ball-mass} applies and gives
\[
 m(B_{r/4}(z_0))\geq
 \left(c_{K,N,L}\frac r{4L}\right)^N=e_n.
\]
Using also $S_A\subset A^{\rho_n+r}$ up to a null set, we obtain
\[
 m_n(A)\leq m(S_A)+e_n
 \leq m(S_A)+m(B_{r/4}(z_0))
 \leq m(A^{\rho_n+r}).
\]
For the empty set, both sides of Hall's inequality are zero.
The same inequality holds with closed neighborhoods, so
Lemma~\ref{lem:hall} gives a coupling of radius at most $\rho_n+r$.
\end{proof}

\begin{remark}[The role of the partition]
The cells need not be nested, and their indices need not persist
between layers. They describe approximation to an identified limit
and are not part of the definition of refinement.
Condition~\eqref{eq:cell-certificate} uses the intrinsic bridge metric.
A bound $\rho_n\leq C_0\ell_n$, when available, must therefore be
verified in this metric.

Section~\ref{sec:limits} gives another route to a coupling estimate.
If reference paths starting at $V_n$ have endpoint displacement at
most $\omega_n\to0$, Theorem~\ref{thm:compact-limit} gives
$h_n\leq\omega_n$.
The local criteria there yield $\omega_n=O(\ell_n)$ under their
respective hypotheses. This estimate can be used directly in the midpoint
recovery theorem, without constructing a partition of $M$.
\end{remark}

\begin{corollary}[Ball masses and packing on a finite layer]
\label{cor:finite-geometry}
Put $h=h_n$. For $x\in V_n$ and $2h<\rho\leq L+2h$,
\begin{equation}
 m_n(B_\rho^{V_n}(x))\geq
 \left(c_{K,N,L}\frac{\rho-2h}{L}\right)^N.
 \label{eq:finite-ball-mass}
\end{equation}
If $2h<r\leq L$, every $r$-separated subset of $V_n$ has cardinality
at most
\begin{equation}
 \left[\frac{2L}{c_{K,N,L}(r-2h)}\right]^N.
 \label{eq:finite-packing}
\end{equation}
\end{corollary}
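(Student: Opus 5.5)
The plan is to transfer the ball-mass bound of Theorem~\ref{thm:ball-mass} from $M$ to $V_n$ through a coupling $\beta_n\in\Pi(m_n,m)$ of support radius $h=h_n$. Such a coupling exists because the infimum defining $h_n$ is attained. The packing bound will then follow by a disjoint-balls counting argument.

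For \eqref{eq:finite-ball-mass}, fix $x\in V_n$ and $2h<\rho\le L+2h$. Since $V_n$ lies within Hausdorff distance $a_n$ of $M$, I first want a point $z\in M$ with $d_Z(x,z)\le h$. To get it, take any $z$ with $(x,z)\in\supp\beta_n$. Such a $z$ exists because $m_n(x)>0$, and $\beta_n$ lives on $V_n\times M$ with displacement at most $h$.

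Next set $r=\rho-2h\in(0,L]$. Every $z'\in B_r(z)\cap M$ is paired by $\beta_n$ only with points $y\in V_n$ satisfying $d_Z(y,z')\le h$. For such $y$,
\[
 d_n(x,y)=d_Z(x,y)\le h+r+h=\rho .
\]
The inequality is not strict, since $B_r(z)$ is open and we have the strict bound $d(z,z')<r$. Indeed, $d_n(x,y)\le d_Z(x,z)+d_Z(z,z')+d_Z(z',y)<h+r+h=\rho$. Hence the mass of $B_r(z)$ is transported into $B^{V_n}_\rho(x)$. The marginal identities then give
\[
 m_n(B^{V_n}_\rho(x))\ge\beta_n\bigl(V_n\times B_r(z)\bigr)=m(B_r(z))\ge\bigl(c_{K,N,L}\,r/L\bigr)^N
\]
by Theorem~\ref{thm:ball-mass}. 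Care is only needed so that the chosen $z$ lies in $M$ and that $r\le L$, which is exactly the hypothesis $\rho\le L+2h$.

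For \eqref{eq:finite-packing}, let $\{x_1,\dots,x_k\}\subset V_n$ be $r$-separated with $2h<r\le L$. The $d_n$-balls of radius $r/2$ about the $x_i$ are pairwise disjoint. However, $r/2$ need not exceed $2h$, so I will not apply \eqref{eq:finite-ball-mass} to these balls directly. Instead I will work in $M$.

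Choose $z_i\in M$ with $d_Z(x_i,z_i)\le h$ as above. Then $d(z_i,z_j)\ge r-2h$ for $i\ne j$, so the open balls $B_{(r-2h)/2}(z_i)$ in $M$ are pairwise disjoint. Since $(r-2h)/2\le L$, Theorem~\ref{thm:ball-mass} gives each of them $m$-mass at least $\bigl(c_{K,N,L}(r-2h)/(2L)\bigr)^N$. Summing over $i$ and using that $m$ is a probability measure yields $k\le[2L/(c_{K,N,L}(r-2h))]^N$, as claimed.

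I expect no real obstacle. The only delicate point is bookkeeping with open versus closed balls and the strict versus non-strict inequalities, together with making sure every radius fed into Theorem~\ref{thm:ball-mass} lies in $(0,L]$.
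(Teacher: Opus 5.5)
Your proposal is correct and follows the paper's proof essentially step for step: you pick $z\in M$ in the support of the conditional measure of an optimal $W_\infty$ coupling over $x$, push $m(B_{\rho-2h}(z))$ into $B^{V_n}_\rho(x)$ via the strict triangle inequality, and for packing you compare disjoint balls of radius $(r-2h)/2$ about the $z_i$ in $M$. The only blemish is the sentence ``The inequality is not strict,'' which should say that the inequality is in fact strict, as your next line shows.
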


\begin{proof}
Choose a coupling $\beta_n$ supported within distance $h$.
Since $m_n(x)>0$, its conditional measure over $x$ is a probability.
Choose $z\in M$ in the support of this conditional measure; then
$d_Z(x,z)\leq h$. For every pair $(y,w)$ in the support of $\beta_n$
with $w\in B_{\rho-2h}(z)$, the triangle inequality gives
\[
 d_n(x,y)\leq d_Z(x,z)+d(z,w)+d_Z(w,y)
 <h+(\rho-2h)+h=\rho.
\]
Taking marginals gives
$m_n(B_\rho^{V_n}(x))\geq m(B_{\rho-2h}(z))$.
Since $0<\rho-2h\leq L$, Theorem~\ref{thm:ball-mass} proves
\eqref{eq:finite-ball-mass}.

For $r$-separated points $x_1,\ldots,x_q$, choose corresponding
$z_1,\ldots,z_q\in M$ as above. For $i\ne j$,
\[
 d(z_i,z_j)\geq d_n(x_i,x_j)-d_Z(x_i,z_i)-d_Z(x_j,z_j)
 \geq r-2h.
\]
Their open balls of radius $(r-2h)/2$ are therefore pairwise disjoint.
Theorem~\ref{thm:ball-mass} gives
\[
 1\geq\sum_{i=1}^q m(B_{(r-2h)/2}(z_i))
 \geq q\left(c_{K,N,L}\frac{r-2h}{2L}\right)^N,
\]
which is \eqref{eq:finite-packing}.
\end{proof}

\begin{example}[Uniform midpoint grids]
\label{ex:midpoint-grid-scale}
For $n\geq1$, put $J_n=2^n$ and let $I_{n,j}$ be the dyadic
intervals of length $1/J_n$, with boundary points assigned to form
a partition of $[0,1]$. Set
\[
 x_{n,j}=\frac{j+1/2}{J_n},\qquad
 V_n=\{x_{n,j}:0\leq j<J_n\},\qquad
 m_n(x_{n,j})=\frac1{J_n}.
\]
Give each layer the shortest-path metric obtained by joining
consecutive points with their Euclidean lengths, and use
$b_n(x,y)=|x-y|$ between consecutive layers.
Interval bisection, with each mass divided equally between the two
halves, gives a stationary local rule with
\[
 \ell_n=\frac1{J_n},\qquad
 \ell_{n+1}=\frac{\ell_n}{2}.
\]
Every supported reference transition has displacement $\ell_n/4$,
so \eqref{eq:reference-displacement} holds.

For $t\in[0,1]$, the midpoints of nested dyadic intervals containing
$t$ converge in $Z$ to a point $\iota(t)$: their successive
distances are $2^{-k-2}$.
Boundary choices give the same limit, since their layer-$k$
midpoints are at distance at most $2^{-k}$.
The Euclidean layer metrics show that
$d_Z(\iota(s),\iota(t))=|s-t|$.
The same paths give
\begin{equation}
 d_Z(x_{n,j},\iota(t))
 \leq\sum_{k=n}^{\infty}2^{-k-2}
 =\frac1{2J_n}=:\rho_n
 \qquad(t\in I_{n,j}).
 \label{eq:dyadic-canonical-cell}
\end{equation}
Identify $M=\iota([0,1])$ with the interval, and let $m$ be Lebesgue
probability. The cell coupling
$\sum_j\delta_{x_{n,j}}\otimes(m|_{I_{n,j}})$ has radius at most
$\rho_n$, while every coupling has radius at least $\rho_n$:
for $r<\rho_n$, the interval $[0,\rho_n-r)$ has positive mass
and distance greater than $r$ from every layer point.
Here we use that $d_Z$ dominates coordinate distance.
Consequently,
\[
 e_n=0,\qquad h_n=\rho_n=\frac{\ell_n}{2}.
\]
The cell estimate and coupling give \eqref{eq:convergence}, so
$Z$ is compact by Lemma~\ref{lem:compact}.
The limit interval satisfies $\CDe(0,N)$ for every $N\geq1$.
\end{example}

\begin{remark}[The role of local reference transport]
\label{ex:slow-mass-rate}
Keep the geometry of Example~\ref{ex:midpoint-grid-scale}, but
temporarily omit \eqref{eq:reference-displacement} and replace the
reference masses by
\[
 \widetilde m_n(x_{n,j})
 =\int_{I_{n,j}}[1+\eta_n(2t-1)]\,\dd t,
 \qquad 0<\eta_n < 1, \quad \eta_n \to 0.
\]
Their discrepancy from the exact cell masses is
$\widetilde e_n=\eta_n/4$.
Thus $\widetilde m_n\rightharpoonup m$, and
Lemma~\ref{lem:upgrade} gives
$\widetilde h_n:=W_{\infty,Z}(\widetilde m_n,m)\to0$.
However,
\begin{equation}
 \widetilde h_n\geq\rho_n+\frac{\eta_n}{4}.
 \label{eq:slow-mass-rate}
\end{equation}
Indeed, the left half of the layer has mass $1/2-\eta_n/4$,
whereas every point in the right half has coordinate at least
$1/2+\rho_n$. A coupling of radius $r<\rho_n+\eta_n/4$ would have
to pair all limit mass in $[0,1/2+\rho_n-r)$ with the left half,
exceeding its available mass.

Choosing $\eta_n/\ell_n\to\infty$ gives
$\widetilde h_n/\ell_n\to\infty$.
These masses cannot admit reference couplings satisfying
\eqref{eq:reference-displacement}, since the dyadic contraction
would then give $\widetilde h_n=O(\ell_n)$ by
Proposition~\ref{prop:local-control} and
Theorem~\ref{thm:compact-limit}.
Thus, for these grids, local reference transport also controls
the rate of mass approximation.
\end{remark}

\subsection{Relative comparisons above the coupling scale}
\label{sec:comparison-scales}

The absolute midpoint defects vanish as $h_n\to0$.
To recover the curvature term at shrinking transport scales, we
compare these defects with the squared endpoint distance.
We use $U_n$ from \eqref{eq:finite-entropy} and the midpoint
coefficient $S_\kappa$ from \eqref{eq:mid-coefficient}.
The metric and entropy estimates below hold for the same approximate
midpoint supplied by Theorem~\ref{thm:single-recovery}.

\begin{corollary}[Relative errors at a prescribed comparison scale]
\label{cor:relative-errors}
Let $p,q\in\Pp(V_n)$ with $D=W_n(p,q)>0$. There is
$\nu\in\Pp(V_n)$ such that, with
$a=W_n(p,\nu)$ and $b=W_n(\nu,q)$,
\begin{align}
0\le \frac{2a^2+2b^2-D^2}{D^2}
&\le 12\frac{h_n}{D}+36\left(\frac{h_n}{D}\right)^2,
\label{eq:relative-action}\\
\frac{\left[S_\kappa(D)-
 U_n(\nu)/(U_n(p)+U_n(q))\right]_+}{D^2}
&\le 2C_{\kappa,L}
 \left[\frac{h_n}{D}+\left(\frac{h_n}{D}\right)^2\right],
\label{eq:relative-entropy}
\end{align}
where $[s]_+=\max\{s,0\}$ and $C_{\kappa,L}$ is the derivative
constant in Theorem~\ref{thm:single-recovery}.
Consequently, if $r_n>0$ and $h_n/r_n\to0$, both right-hand sides
tend to zero uniformly over all endpoints with $D\ge r_n$.
\end{corollary}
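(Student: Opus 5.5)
This follows directly from Theorem~\ref{thm:single-recovery} applied with $h=h_n$. The coupling $\beta_n\in\Pi(m_n,m)$ of support radius $h_n$ exists because the infimum defining $h_n$ is attained, as noted after Lemma~\ref{lem:upgrade}. The limit satisfies $\CDe(K,N)$ by the standing assumption of this section. For the given $p,q$ with $D>0$, I take $\nu$ to be the measure supplied by that theorem. Then both estimates hold for one and the same $\nu$, which is the point of the corollary.

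For \eqref{eq:relative-action}, I use the second branch of the minimum in \eqref{eq:sharp-action}: $0\le 2a^2+2b^2-D^2\le 12Dh_n+36h_n^2$. Dividing by $D^2>0$ gives $12(h_n/D)+36(h_n/D)^2$ directly.

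For \eqref{eq:relative-entropy}, I start from \eqref{eq:sharp-entropy}. Since $U_n(p)+U_n(q)>0$ by strict positivity of $U_n$ on finite layers, I can divide to get
\[
 \frac{U_n(\nu)}{U_n(p)+U_n(q)}\ge S_\kappa(D)-2C_{\kappa,L}h_n(D+h_n).
\]
Hence $\bigl[S_\kappa(D)-U_n(\nu)/(U_n(p)+U_n(q))\bigr]_+\le 2C_{\kappa,L}h_n(D+h_n)$. Here the positive part is harmless because the right-hand side is nonnegative. Dividing by $D^2$ gives $2C_{\kappa,L}[h_n/D+(h_n/D)^2]$.

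For the uniform statement, suppose $D\ge r_n$. Then $h_n/D\le h_n/r_n$, and both right-hand sides are increasing polynomials in $h_n/D$ with no constant term. They are therefore bounded by the same expressions evaluated at $h_n/r_n$, which tend to zero independently of the endpoints.

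There is no real obstacle here. The only points needing care are that the same $\nu$ serves both inequalities, and that the denominator $U_n(p)+U_n(q)$ is strictly positive, since Dirac endpoints are allowed.
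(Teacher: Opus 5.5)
Your proposal is correct and follows the paper's own argument: take the $\nu$ from Theorem~\ref{thm:single-recovery} with $h=h_n$, divide the second branch of \eqref{eq:sharp-action} by $D^2$, and divide \eqref{eq:sharp-entropy} by $U_n(p)+U_n(q)>0$ before taking the positive part and dividing by $D^2$. For $D\ge r_n$, both right-hand sides are increasing in $h_n/D$, so they are bounded by their values at $h_n/r_n$; the only difference is that the paper re-derives nonnegativity from $2(a^2+b^2)\ge(a+b)^2\ge D^2$ rather than reading it off \eqref{eq:sharp-action}.
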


\begin{proof}
Choose $\nu$ as in Theorem~\ref{thm:single-recovery}.
The triangle inequality and $2(a^2+b^2)\geq(a+b)^2$ give
\[
 2a^2+2b^2-D^2\geq(a+b)^2-D^2\geq0.
\]
Divide the bound $2a^2+2b^2-D^2\leq12Dh_n+36h_n^2$ in
\eqref{eq:sharp-action} by $D^2$ to obtain
\eqref{eq:relative-action}.
Every probability on $V_n$ has finite entropy because the reference
masses are positive. Thus $U_n(p)+U_n(q)>0$, and division of
\eqref{eq:sharp-entropy} by this sum yields
\[
 \frac{U_n(\nu)}{U_n(p)+U_n(q)}
 \geq S_\kappa(D)-2C_{\kappa,L}h_n(D+h_n).
\]
Taking the positive part of the deficit and dividing by $D^2$ proves
\eqref{eq:relative-entropy}. For $D\geq r_n>0$, both right-hand
sides are bounded in terms of
$h_n/r_n$, which proves the uniform convergence.
\end{proof}

\begin{remark}[Concentrated or coincident endpoints]
\label{rem:concentration}
No uniform positive lower bound on $U_n(p)+U_n(q)$ is required in
Corollary~\ref{cor:relative-errors}. In particular, its estimates include
Dirac measures and other endpoint families whose entropies diverge with
$n$. This follows from the multiplicative form of
\eqref{eq:sharp-entropy}, which retains the factor $U_n(p)+U_n(q)$.
When $D=0$, take $\nu=p=q$. Both midpoint comparisons then hold exactly,
with $S_\kappa(0)=1/2$, and no normalization by $D$ is used.
\end{remark}

\begin{corollary}[The curvature scale of the entropy comparison]
\label{cor:curvature-scale}
For fixed $\kappa$,
\begin{equation}
S_\kappa(D)=\frac12+\frac{\kappa}{16}D^2+O_\kappa(D^4)
\qquad(D\to0).
\label{eq:midpoint-expansion}
\end{equation}
Suppose $0<r_n\le R_n$, $R_n\to0$, and $h_n/r_n\to0$.
For every endpoint pair with $r_n\le D\le R_n$, the approximate
midpoint $\nu$ supplied by Theorem~\ref{thm:single-recovery} satisfies
\begin{equation}
\frac{U_n(\nu)/(U_n(p)+U_n(q))-1/2}{D^2}
\ge \frac{K}{16N}
 -2C_{\kappa,L}
 \left[\frac{h_n}{r_n}+\left(\frac{h_n}{r_n}\right)^2\right]
 -A_\kappa R_n^2
\label{eq:curvature-scale-bound}
\end{equation}
for all sufficiently large $n$, with a constant $A_\kappa$ independent
of the endpoints. Its relative action error tends to zero uniformly
on the same class.
\end{corollary}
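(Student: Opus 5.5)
The plan is to establish the Taylor expansion \eqref{eq:midpoint-expansion} directly from the closed forms \eqref{eq:mid-coefficient}. After that, the lower bound follows by combining this expansion with Corollary~\ref{cor:relative-errors} for the midpoint $\nu$ of Theorem~\ref{thm:single-recovery}.

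For the expansion, write $S_\kappa(D)=\tfrac12 g(\sqrt{|\kappa|}D/2)$, where $g=1/\cos$ for $\kappa>0$, $g=1/\cosh$ for $\kappa<0$, and $g\equiv1$ for $\kappa=0$. The series $1/\cos u=1+u^2/2+O(u^4)$ and $1/\cosh u=1-u^2/2+O(u^4)$ then give
\[
S_\kappa(D)=\tfrac12+\tfrac12\cdot\tfrac{\kappa D^2}{8}+O_\kappa(D^4)=\tfrac12+\tfrac{\kappa}{16}D^2+O_\kappa(D^4).
\]
To get an endpoint-independent constant, I would use that $S_\kappa$ is smooth and even on a neighbourhood of $0$. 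By Taylor's theorem with remainder, there are $\delta_\kappa>0$ and $A_\kappa$ such that
\[
\Bigl|S_\kappa(D)-\tfrac12-\tfrac{\kappa}{16}D^2\Bigr|\le A_\kappa D^4 \qquad (0\le D\le\delta_\kappa).
\]
For $\kappa>0$, the choice $\delta_\kappa<\pi/\sqrt\kappa$ keeps us away from the pole. Because $R_n\to0$, we have $R_n\le\delta_\kappa$ for all sufficiently large $n$. This is the source of the phrase ``for all sufficiently large $n$'' in the statement.

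Now fix an endpoint pair with $r_n\le D\le R_n$ and take $\nu$ from Theorem~\ref{thm:single-recovery} with $h=h_n$. By \eqref{eq:sharp-entropy}, divided by $U_n(p)+U_n(q)>0$ as in Corollary~\ref{cor:relative-errors},
\[
\frac{U_n(\nu)}{U_n(p)+U_n(q)}-\frac12\ge S_\kappa(D)-\frac12-2C_{\kappa,L}h_n(D+h_n).
\]
Dividing by $D^2$ and inserting the Taylor bound gives the lower bound
\[
\frac{\kappa}{16}-A_\kappa D^2-2C_{\kappa,L}\Bigl[\frac{h_n}{D}+\frac{h_n^2}{D^2}\Bigr].
\]
The bracket is monotone in $D$, so $D\ge r_n$ bounds it by the bracket evaluated at $h_n/r_n$, and $D\le R_n$ gives $A_\kappa D^2\le A_\kappa R_n^2$. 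Finally $\kappa/16=K/(16N)$, which yields \eqref{eq:curvature-scale-bound}. Nothing used depends on the endpoints beyond $D$, and the argument needs no lower bound on the entropies, so Dirac endpoints are allowed.

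For the action claim, \eqref{eq:relative-action} with $D\ge r_n$ bounds the relative action error of the same $\nu$ by $12h_n/r_n+36(h_n/r_n)^2\to0$, uniformly over the class.

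The only delicate point is ensuring that the quartic remainder constant $A_\kappa$ is uniform. I would handle this by restricting to $D\le\delta_\kappa$ via $R_n\to0$, rather than working on all of $[0,L]$, although on $[0,L]$ smoothness under \eqref{eq:strict-comparison-range} would also suffice.
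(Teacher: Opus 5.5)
Your proposal is correct and follows the paper's proof essentially step for step. The half-angle closed forms give the expansion with a uniform quartic remainder on some $[0,D_0]$, and $R_n\to0$ puts the range $[r_n,R_n]$ inside it for large $n$. Dividing \eqref{eq:sharp-entropy} by $U_n(p)+U_n(q)$ and using $r_n\le D\le R_n$ then yields \eqref{eq:curvature-scale-bound}, and the action claim for the same $\nu$ comes from Corollary~\ref{cor:relative-errors}.
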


\begin{proof}
The half-angle formulas in \eqref{eq:mid-coefficient}, together with
$\cos x=1-x^2/2+O(x^4)$ and $\cosh x=1+x^2/2+O(x^4)$,
give \eqref{eq:midpoint-expansion}. In particular, for fixed $\kappa$
there exist $D_0>0$ and $A_\kappa\geq0$ such that
\[
 \left|S_\kappa(D)-\frac12-\frac{\kappa}{16}D^2\right|
 \leq A_\kappa D^4\qquad(0\leq D\leq D_0).
\]
For $\kappa=0$ the expansion is exact and $A_\kappa=0$ is admissible.
For all sufficiently large $n$, $R_n\leq D_0$.
Choose $\nu$ as in Theorem~\ref{thm:single-recovery} and apply
\eqref{eq:sharp-entropy}. After dividing by $U_n(p)+U_n(q)$,
subtracting $1/2$, and dividing by $D^2$, we obtain
\[
 \frac{U_n(\nu)/(U_n(p)+U_n(q))-1/2}{D^2}
 \geq\frac{\kappa}{16}-A_\kappa D^2
 -2C_{\kappa,L}\left[\frac{h_n}{D}
                         +\left(\frac{h_n}{D}\right)^2\right].
\]
Now use $0<r_n\leq D\leq R_n$ and $\kappa=K/N$ to obtain
\eqref{eq:curvature-scale-bound}. Corollary~\ref{cor:relative-errors}
gives the uniform relative action estimate for the same $\nu$.
\end{proof}

The condition $h_n=o(r_n)$ suffices because $S_\kappa'(0)=0$.
This gives a coefficient error of order $h_n(D+h_n)$; a uniform
Lipschitz bound would only give an error of order $h_n$.
The resulting lower comparison in \eqref{eq:curvature-scale-bound}
holds at a fixed dimension parameter $N$. It does not identify the
optimal curvature bound or the smallest admissible dimension from a
single endpoint pair or a single choice of approximate midpoint.

\begin{corollary}[Sufficient resolution and mass accuracy]
\label{cor:resolution-regime}
Under the assumptions of Theorem~\ref{thm:cell-rate}, we have
\begin{equation}
 h_n\leq\rho_n+\frac{4L}{c_{K,N,L}}e_n^{1/N}.
 \label{eq:scale-input-rate}
\end{equation}
For comparison scales $r_n>0$, the condition
\begin{equation}
 \rho_n+Le_n^{1/N}=o(r_n)
 \label{eq:resolution-test-separation}
\end{equation}
is sufficient for the uniform estimates of
Corollary~\ref{cor:relative-errors}.
If also $0<r_n\leq R_n\to0$, the conclusion of
Corollary~\ref{cor:curvature-scale} holds.
In particular, if
\begin{equation}
 e_n=O\!\left((\rho_n/L)^N\right),
 \label{eq:mass-mesh-balance}
\end{equation}
then $h_n=O(\rho_n)$, and $\rho_n=o(r_n)$ suffices.
If, in addition, $\rho_n=O(\ell_n)$, it suffices to require
$\ell_n=o(r_n)$.
\end{corollary}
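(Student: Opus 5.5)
The corollary follows by chaining Theorem~\ref{thm:cell-rate} into Corollaries~\ref{cor:relative-errors} and~\ref{cor:curvature-scale}; no new estimate is needed.

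\emph{Rate bound.} Inequality \eqref{eq:scale-input-rate} is the second entry of the minimum in \eqref{eq:cell-rate}. The hypotheses here are exactly those of Theorem~\ref{thm:cell-rate}, and the limit satisfies $\CDe(K,N)$, so that theorem applies verbatim. Dropping the minimum only weakens the bound.

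\emph{Sufficiency of \eqref{eq:resolution-test-separation}.} Set $c=c_{K,N,L}\in(0,1]$. Then \eqref{eq:scale-input-rate} gives
\[
 \frac{h_n}{r_n}\le\frac{\rho_n}{r_n}+\frac{4}{c}\,\frac{Le_n^{1/N}}{r_n}
 \le\max\{1,4/c\}\,\frac{\rho_n+Le_n^{1/N}}{r_n}.
\]
Under \eqref{eq:resolution-test-separation} the right-hand side tends to zero, so $h_n=o(r_n)$. This is precisely the hypothesis in the last sentence of Corollary~\ref{cor:relative-errors}, which therefore gives the uniform estimates for $D\ge r_n$. If in addition $0<r_n\le R_n\to0$, all hypotheses of Corollary~\ref{cor:curvature-scale} hold, and its conclusion follows.

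\emph{The balanced case.} Suppose \eqref{eq:mass-mesh-balance} holds, say $e_n\le A(\rho_n/L)^N$. Then $Le_n^{1/N}\le A^{1/N}\rho_n$, and \eqref{eq:scale-input-rate} gives $h_n\le(1+4A^{1/N}/c)\rho_n=O(\rho_n)$. Moreover $\rho_n+Le_n^{1/N}=O(\rho_n)$, so $\rho_n=o(r_n)$ implies \eqref{eq:resolution-test-separation}. Finally, if $\rho_n=O(\ell_n)$, then $\ell_n=o(r_n)$ gives $\rho_n=o(r_n)$, and the previous sentence applies.

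The only point needing care is the edge case $e_n=0$, where the exact-mass clause $h_n\le\rho_n$ of Theorem~\ref{thm:cell-rate} applies directly. The remaining constants are independent of $n$ and of the endpoints, so the uniformity asserted in the earlier corollaries is preserved.
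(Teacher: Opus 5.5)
Your proposal is correct and follows essentially the same route as the paper: read off \eqref{eq:scale-input-rate} from \eqref{eq:cell-rate}, divide by $r_n$ to get $h_n/r_n\to0$ so that Corollaries~\ref{cor:relative-errors} and~\ref{cor:curvature-scale} apply, and in the balanced case use $Le_n^{1/N}=O(\rho_n)$ followed by substitution. Your separate treatment of $e_n=0$ is harmless but not needed, since \eqref{eq:scale-input-rate} already gives $h_n\le\rho_n$ in that case.
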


\begin{proof}
The constants $L$ and $c_{K,N,L}>0$ are fixed.
Dividing \eqref{eq:scale-input-rate} by $r_n$ shows that
\eqref{eq:resolution-test-separation} implies $h_n/r_n\to0$.
The two preceding corollaries apply.
Under \eqref{eq:mass-mesh-balance},
$Le_n^{1/N}=O(\rho_n)$, so $h_n=O(\rho_n)$.
The remaining assertions follow by substitution.
\end{proof}

The same comparison scales can be obtained from the reference
transport. Under any of the local endpoint-control criteria in
Section~\ref{sec:limits}, we have $h_n\leq C_*\ell_n$ for all
sufficiently large $n$, with a constant $C_*$ independent of $n$.
Thus $\ell_n=o(r_n)$ again suffices.
For example, Proposition~\ref{prop:local-control} gives
\[
 h_n\leq\frac{C\ell_n}{1-q}
 \qquad\text{if }\ell_{n+1}\leq q\ell_n,\quad 0<q<1,
\]
where $C$ is the constant in \eqref{eq:reference-displacement}.
The partition estimate and the reference-path estimate therefore
provide two sufficient ways to control the same relative errors.

\subsection{The nearest-neighbor obstruction}

The preceding estimates apply above the coupling scale. At the
nearest-neighbor scale, a finite layer may retain a positive relative
action defect. The obstruction is already visible for Dirac endpoints.
\begin{corollary}[An obstruction at the nearest-neighbor scale]
\label{prop:nearest-scale}
Let $|V_n|\geq2$, put $\delta_n=\min_{x\ne y}d_n(x,y)$, and choose
$x,y$ with $d_n(x,y)=\delta_n$.
For $p=\delta_x$ and $q=\delta_y$,
\begin{equation}
 \min_{\nu\in\Pp(V_n)}
 \bigl[2W_n(p,\nu)^2+2W_n(\nu,q)^2-W_n(p,q)^2\bigr]
 =\delta_n^2.
 \label{eq:nearest-action-floor}
\end{equation}
Consequently, the minimum relative action defect for this pair is $1$.
\end{corollary}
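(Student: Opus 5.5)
The plan is to prove the two inequalities separately: a lower bound on the minimum in \eqref{eq:nearest-action-floor}, valid for every $\nu\in\Pp(V_n)$, and a matching value attained at an explicit $\nu$. The relative statement then follows by dividing by $W_n(p,q)^2=\delta_n^2>0$.

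For the lower bound I will reuse the pointwise estimate from the proof of Proposition~\ref{prop:finite-obstruction}. For every $z\in V_n$ we have $d_n(x,z)^2+d_n(z,y)^2\ge\delta_n^2$. If $z\in\{x,y\}$, one term is $0$ and the other is $\delta_n^2$. If $z\notin\{x,y\}$, both terms are at least $\delta_n^2$ by the definition of $\delta_n$.

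Since $p$ and $q$ are Dirac masses, the only couplings with $\nu$ are product couplings. Hence
\[
 W_n(\delta_x,\nu)^2=\sum_z\nu(z)d_n(x,z)^2,\qquad
 W_n(\nu,\delta_y)^2=\sum_z\nu(z)d_n(z,y)^2.
\]
Adding these and integrating the pointwise bound against $\nu$ gives
\[
 W_n(\delta_x,\nu)^2+W_n(\nu,\delta_y)^2\ge\delta_n^2 .
\]
Therefore $2W_n(p,\nu)^2+2W_n(\nu,q)^2-\delta_n^2\ge 2\delta_n^2-\delta_n^2=\delta_n^2$.

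For attainment I take $\nu=\delta_x$. This gives action $2\cdot0+2\delta_n^2-\delta_n^2=\delta_n^2$, so the bound is sharp. Any convex combination of $\delta_x$ and $\delta_y$ would also attain it. The minimum exists in any case because the simplex is compact and the expression is continuous, but the explicit minimizer makes that argument unnecessary.

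I do not expect a real obstacle here. The only point needing care is the exact formula for $W_n$ when one marginal is a Dirac mass, and that is immediate because the coupling is forced to be a product. The relative defect equals $\delta_n^2/W_n(p,q)^2=1$. One could add, as the introduction does, that on uniform interval grids $\delta_n=\ell_n=2h_n$, so $D=2h_n$ at this scale; this is consistent with Corollary~\ref{cor:relative-errors}, whose bound does not vanish when $h_n/D$ stays bounded away from zero.
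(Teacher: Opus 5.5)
Your proposal is correct and follows essentially the same route as the paper: you take the lower bound from the Dirac-pair estimate of Proposition~\ref{prop:finite-obstruction} and exhibit an explicit minimizer. The only difference is that you attain the bound with $\nu=\delta_x$, whereas the paper uses $\nu=\tfrac12(\delta_x+\delta_y)$. As you note, every convex combination of $\delta_x$ and $\delta_y$ works equally well.
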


\begin{proof}
The Dirac-pair estimate in Proposition~\ref{prop:finite-obstruction}
gives the lower bound $\delta_n^2$.
For $\nu=\tfrac12(\delta_x+\delta_y)$, both squared distances to the
endpoints equal $\delta_n^2/2$, so equality is attained.
\end{proof}

For the uniform grids of Example~\ref{ex:midpoint-grid-scale},
\[
 \delta_n=\ell_n=2h_n.
\]
Corollary~\ref{prop:nearest-scale} therefore gives a minimum action
defect $\delta_n^2=4h_n^2\to0$ for a closest Dirac pair, while its
minimum relative defect is $1$ at every layer.
This is consistent with the fixed normalization $L^2$ used in
Section~\ref{sec:characterization}.
It also shows that the uniform vanishing of relative action errors
cannot in general be extended to transport distances comparable
to the coupling error $h_n$.
\section{Parameter regions, invariance, and finite defects}
\label{sec:invariance}

The preceding sections characterize $\SCD(K,N)$ by the geometry of
the measured limit and estimate the corresponding errors on finite layers.
We now study parameter monotonicity and invariance under deletion
or grouping of refinement steps. We also quantify the simultaneous
admissibility and entropy errors on finite segments.

We retain the standing assumptions of Section~\ref{sec:entropy}.
Convergence is assumed only where stated.

\subsection{Parameter monotonicity}

Within the comparison range, define the admissible parameter region by
\[
 \mathcal C(\Sigma)
 =\{(K,N)\in\mathbb R\times[1,\infty):
       \Sigma\text{ satisfies }\SCD(K,N)\}.
\]

\begin{proposition}[Monotonicity]
\label{prop:parameter-monotonicity}
If $\Sigma\text{ satisfies }\SCD(K,N)$, 
then
$\Sigma\text{ satisfies }\SCD(K',N')$ 
whenever $K'\le K$ and $N\le N'<\infty$.
\end{proposition}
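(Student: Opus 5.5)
The plan is to take, for given $J\ge2$ and $\eps>0$, the very section supplied by $\SCD(K,N)$ and check that it also works for $(K',N')$. Admissibility \eqref{eq:scd-action} involves only the metric, so $\mathcal G^{\eps}_{r,J}(\mu_0,\mu_J)$ is independent of the parameters. The problem therefore reduces to the pointwise inequality: whenever $u_j\ge \sigma_\kappa^{(1-t)}(D)u_0+\sigma_\kappa^{(t)}(D)u_J-\eps$ holds with $u=e^{-H/N}$ and $\kappa=K/N$, we want
\[
 e^{-H_j/N'}\ge \sigma_{\kappa'}^{(1-t)}(D)e^{-H_0/N'}+\sigma_{\kappa'}^{(t)}(D)e^{-H_J/N'}-\eps',
\]
with $\kappa'=K'/N'$ and $\eps'$ a controlled function of $\eps$. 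Since the definition quantifies over every $\eps>0$, it is enough to show that $\eps'\to0$ as $\eps\to0$, uniformly in the data. We then run the definition with a smaller tolerance, which also stays below the target admissibility tolerance.

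The coefficient step comes first. If $K'\le K$ and $N'\ge N$, then $\kappa'\le\kappa$ in both sign cases $K\ge0$ and $K<0$. The classical monotonicity of $\kappa\mapsto\sigma_\kappa^{(t)}(D)$, which is nondecreasing for $D\sqrt{\kappa}<\pi$, then gives $\sigma_{\kappa'}^{(t)}\le\sigma_\kappa^{(t)}$. The comparison range \eqref{eq:strict-comparison-range} for $K'$ holds automatically. This step lets us lower $K$ at fixed $N$, so we may assume $K'=K$.

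The second step raises $N$ to $N'$ with $\kappa$ replaced by $K/N'$, and this is where the real work lies. I would follow the EKS argument \cite{EKS2015}. Set $\theta=N/N'\in(0,1]$, so that $e^{-H/N'}=u^\theta$. By concavity of $x\mapsto x^\theta$ and a Hölder-type inequality for the weights, we have $(\alpha a+\beta b)^\theta\ge\alpha' a^\theta+\beta' b^\theta$ with $\alpha'=\sigma^{(1-t)}_{K/N'}$ and $\beta'=\sigma^{(t)}_{K/N'}$. The key point is the known inequality $\sigma_{K/N'}^{(t)}(D)\le(\sigma_{K/N}^{(t)}(D))^{\theta}t^{1-\theta}$, or whichever variant EKS use, combined with Hölder: $\alpha a+\beta b\ge (\alpha^\theta a^\theta\cdots)$. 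I expect this scalar inequality to be the main obstacle. I would first try to cite EKS's proof that $\CDe(K,N)\Rightarrow\CDe(K,N')$, since the argument is purely pointwise in the entropies and coefficients and transfers verbatim here. When $K\ge0$ and $\kappa$ decreases, the weight sum $\alpha+\beta$ may exceed $1$, so care is needed there.

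The last step handles the $\eps$-error. The function $x\mapsto x^\theta$ is uniformly continuous on $[0,C_h]$, with modulus $\eps^\theta$. So if $u_j\ge h_j-\eps$, then $u_j^\theta\ge(h_j)_+^\theta-\eps^\theta\ge h_j'-\eps^\theta$, where $h_j'$ denotes the $(K',N')$ comparison value. Given a target $\eps'$, we apply $\SCD(K,N)$ with $\min\{\eps',(\eps')^{1/\theta}\}$. This finishes the proof, with the threshold $r_0$ still uniform over the endpoints.
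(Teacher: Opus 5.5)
Your reduction is the same as the paper's. The admissible class does not depend on $(K,N)$, so you keep the same section, and you handle the tolerance exactly as the paper does, via $|x^\theta-y^\theta|\le|x-y|^\theta$ and a smaller tolerance $\min\{\eps',(\eps')^{1/\theta}\}$.

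The gap is the step you yourself flag as the main obstacle. Put $\theta=N/N'$, $a,b\in(0,1]$ and $\phi(t)=\sigma_{K/N}^{(1-t)}(D)a+\sigma_{K/N}^{(t)}(D)b$. The whole proposition rests on the scalar inequality
\[
 \phi(t)^{\theta}\ge\sigma_{K'/N'}^{(1-t)}(D)a^{\theta}+\sigma_{K'/N'}^{(t)}(D)b^{\theta},
\]
and you do not prove it. Citing EKS ``verbatim'' does not close it. Their monotonicity argument goes through the differential characterization of $(K,N)$-convexity of $t\mapsto\Ent(\mu_t)$ along a whole geodesic. An $\SCD$ section gives only finitely many values, each with an error, so there is no function on $[0,1]$ to differentiate. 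The reasoning transfers only if you apply it to the comparison function $\phi$ instead of the section, and that is the paper's proof. Since $\phi''+(K/N)D^2\phi=0$, one gets $(\phi^\theta)''+(K/N')D^2\phi^\theta=\theta(\theta-1)\phi^{\theta-2}(\phi')^2\le0$. The same holds with $K'\le K$ because $\phi^\theta\ge0$. A Dirichlet comparison using $(K'/N')D^2<\pi^2$ then gives the display for every $t$.

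Your claim that $\kappa'\le\kappa$ ``in both sign cases'' is also false for $K<0$. With $K'=K<0$, raising $N$ increases $\kappa$ and hence the coefficients, so the gain must come from $u^\theta\ge u$. This error is harmless only because you use coefficient monotonicity at fixed $N$.

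Your H\"older route can be completed, and it would give a genuinely different, Sturm-style proof. The inequality $\sigma^{(t)}_{\theta\kappa}(D)\le\sigma^{(t)}_\kappa(D)^\theta t^{1-\theta}$ is true. Set $x=\kappa D^2<\pi^2$ and $c_k=k^2\pi^2$. The product formulas for $\sin$ and $\sinh$ give $\log\sigma^{(t)}_\kappa(D)=\log t+\sum_{k\ge1}\bigl[\log(1-t^2x/c_k)-\log(1-x/c_k)\bigr]$. Each term is convex in $x$, because $t^2(c_k-x)\le c_k-t^2x$. Convexity between $0$ and $x$ then gives the claim. Now write $\alpha,\beta$ for the old coefficients and $\alpha',\beta'$ for the new ones. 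H\"older with exponents $1/\theta$ and $1/(1-\theta)$ gives
\[
 \alpha'a^\theta+\beta'b^\theta\le(1-t)^{1-\theta}(\alpha a)^\theta+t^{1-\theta}(\beta b)^\theta\le(\alpha a+\beta b)^\theta .
\]
This handles $\alpha+\beta\ge1$ and $\alpha+\beta\le1$ at once, which concavity of $x^\theta$ alone cannot do. As submitted, however, the proposal establishes neither this log-convexity nor the precise H\"older step, so the core of the proposition is missing.
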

\begin{proof}
The admissible classes depend only on the metric geometry and the
tolerance. Increasing $N$ changes the entropy powers by a fractional
power, so it remains to compare the entropy bounds.
Fix a segment of length $J$ and endpoint probabilities. Write
$a,b>0$ for their entropy powers at dimension $N$ and $D$ for their
Wasserstein distance, and set
\[
 \phi(t)=\sigma_{K/N}^{(1-t)}(D)a+\sigma_{K/N}^{(t)}(D)b,
 \qquad q=N/N'\in(0,1].
\]
The positive function $\phi$ solves $\phi''+(K/N)D^2\phi=0$.
Direct differentiation gives
\[
 (\phi^q)''+\frac K{N'}D^2\phi^q
   =q(q-1)\phi^{q-2}(\phi')^2\le0.
\]
Since $K'\le K$, we also have
$(\phi^q)''+(K'/N')D^2\phi^q\le0$. Scalar Dirichlet comparison
therefore yields
\[
 \phi(t)^q\ge
 \sigma_{K'/N'}^{(1-t)}(D)a^q+
 \sigma_{K'/N'}^{(t)}(D)b^q.
\]
Indeed, the difference $w$ between the two sides has zero endpoint
values and satisfies $w''+cw\le0$, where
$c=(K'/N')D^2<\pi^2$. 
Testing against the negative part $w_-=\max\{-w,0\}$ gives
\[
 \int_0^1|(w_-)'|^2\le c\int_0^1w_-^2.
\]
The Dirichlet Poincar\'e inequality implies $w_-=0$.
The bound on $c$ follows from the comparison range and
$K'\le K$, $N'\ge N$.

Apply this inequality to a section
$\boldsymbol\mu\in\mathcal G_{r,J}^{\delta}(\mu_0,\mu_J)$
satisfying the entropy comparisons with tolerance $\delta$.
Since $U_{N'}(\mu\mid m)=U_N(\mu\mid m)^q$ and
$|x^q-y^q|\le|x-y|^q$ for $x,y\ge0$, we obtain
\[
 U_{N'}(\mu_j\mid m_{r+j})
 \ge(\phi(j/J)-\delta)_+^q
 \ge\phi(j/J)^q-\delta^q.
\]
Given $\eps>0$, choose $\delta>0$ with
$\delta\le\eps$ and $\delta^q\le\eps$.
The same section belongs to
$\mathcal G_{r,J}^{\eps}(\mu_0,\mu_J)$ and satisfies the entropy
comparisons at $(K',N')$ with tolerance $\eps$.
This choice is independent of the endpoints, proving
$\SCD(K',N')$.
\end{proof}

\subsection{Tail invariance and grouped refinement steps}

Since the $\SCD(K,N)$ condition concerns sufficiently late segments
of each fixed length, deleting finitely many initial layers
preserves the condition.
To be more precise, by Lemma~\ref{lem:gluing}, 
deleting the initial layers leaves the intrinsic bridge metric unchanged 
on all retained layers, 
and hence preserves the admissible sections and entropy comparisons there. 

Furthermore, if two complete refinement systems satisfying
the standing assumptions converge to the same measured limit,
Theorem~\ref{thm:characterization} shows that their
$\SCD(K,N)$ conditions are equivalent within a common
comparison range.
These observations give the following proposition directly.

\begin{proposition}[Tail and limit invariance]
\label{cor:tail-invariance}
Let $\Sigma$ and $\widetilde\Sigma$ be complete refinement systems 
satisfying the standing assumptions. Then we have
\begin{enumerate}
 \item Deleting finitely many initial layers of a refinement system $\Sigma$
          preserves its $\SCD(K,N)$ condition.
\item If both systems converge in the sense of
         Definition~\ref{def:convergence}
         and their limits are related by a measure-preserving isometry,
         then their $\SCD(K,N)$ conditions agree in every common
         comparison range.
\end{enumerate}
\end{proposition}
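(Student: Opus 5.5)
For part (1), the plan is to check that $\SCD(K,N)$ depends only on sufficiently late segments, and that their comparison data are unchanged by deleting layers. Let $\widetilde\Sigma$ be obtained from $\Sigma$ by deleting $X_0,\ldots,X_{k-1}$ and reindexing, so that $\widetilde X_n=X_{n+k}$, with the same bridges $b_{n+k}$ and reference couplings $\gamma_{n+k}$.

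By Lemma~\ref{lem:gluing}, distances between points of $X_r,\ldots,X_{r+J}$ coincide with $D_{[r,r+J]}$. This metric is built only from the bridges $b_r,\ldots,b_{r+J-1}$. Hence the intrinsic bridge metric of $\widetilde\Sigma$ agrees with $d_Z$ on every retained layer, and $\widetilde Z$ embeds isometrically as the closure of the union of retained layers. We must check that the standing assumptions survive:
\begin{itemize}
\item complete refinement and \eqref{eq:reference-displacement} are tail properties;
\item the tail diameter bound \eqref{eq:tail-diameter} holds with the same $L$ once $R$ is replaced by $\max\{R-k,0\}$, or equivalently once late enough levels are taken.
\end{itemize}
The layer entropies $H_n$, the Wasserstein distances $W$, the actions $\mathcal A_{r,J}$ and the classes $\mathcal G^{\eta}_{r,J}$ are then literally identical for $r\ge k$ after the index shift. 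Given $J,\eps$ and the threshold $r_0(J,\eps)$ for $\Sigma$, the threshold $\max\{r_0-k,0\}$ serves for $\widetilde\Sigma$. Conversely, a threshold for $\widetilde\Sigma$ shifted by $k$ serves for $\Sigma$. So part (1) is in fact an equivalence.

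For part (2), the plan is to apply Theorem~\ref{thm:characterization} to each system separately. This gives $\SCD(K,N)$ for $\Sigma$ if and only if $\CDe(K,N)$ holds for $(M,d,m)$, and the same for $\widetilde\Sigma$ and $(\widetilde M,\widetilde d,\widetilde m)$. Here $(K,N)$ ranges over a common comparison range, meaning $KL^2<N\pi^2$ and $K\widetilde L^2<N\pi^2$ when $K>0$. It then remains to show that $\CDe(K,N)$ is invariant under a measure-preserving isometry $\Phi\colon M\to\widetilde M$ with $\Phi_\#m=\widetilde m$.

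For this, the pushforward $\Phi_\#$ is an isometry of $(\Pp(M),W_2)$ onto $(\Pp(\widetilde M),W_2)$, so it maps constant-speed geodesics to constant-speed geodesics. It also preserves relative entropy, since $\Ent_{\widetilde m}(\Phi_\#\mu)=\Ent_m(\mu)$: densities are transported by composition with $\Phi^{-1}$. Consequently the inequality \eqref{eq:cde} transfers verbatim, with the same $D$.

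The only step needing care is the comparison-range bookkeeping in part (2). The two systems may carry different constants $L,\widetilde L$, and the characterization theorem must be applied to each with its own constant. I would handle this by stating the result for parameters inside both ranges, as the phrase ``common comparison range'' indicates. I would also note that $\CDe(K,N)$ on the limit does not itself depend on $L$, since $\diam M\le\min\{L,\widetilde L\}$ after the isometry. No further obstacle is expected.
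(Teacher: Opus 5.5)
Your proposal is correct and follows essentially the same route as the paper. Part (1) rests on Lemma~\ref{lem:gluing}: the intrinsic bridge metric on retained layers is unchanged, so admissible sections and entropy comparisons are preserved. Part (2) applies Theorem~\ref{thm:characterization} to each system, and your explicit check that $\CDe(K,N)$ is invariant under a measure-preserving isometry spells out what the paper takes as immediate.
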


For a subsequence, the bridges between selected layers must also
be specified. We use the distances obtained by gluing the original
bridges across the omitted layers.

\begin{proposition}[Subsequences and grouped steps]
\label{prop:schedule-invariance}
Suppose $\Sigma$ converges in the sense of
Definition~\ref{def:convergence}, and let $n_0<n_1<\cdots$ tend to
infinity. Between $V_{n_i}$ and $V_{n_{i+1}}$, take the restriction of
$D_{[n_i,n_{i+1}]}$ to these two layers as the new bridge, and compose
the reference kernels over the omitted layers. The resulting
comparison system $\widetilde\Sigma$ has the same measured limit.
Within a common comparison range, the admissibility and entropy
comparisons of Definition~\ref{def:scd} hold for the selected layers
if and only if $\Sigma$ satisfies $\SCD(K,N)$.
\end{proposition}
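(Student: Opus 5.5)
The plan is to reduce everything to the limit characterization, Theorem~\ref{thm:characterization}, applied to the two systems $\Sigma$ and $\widetilde\Sigma$ with the same limit $(M,d,m)$. The argument has four steps, taken in the following order.

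\emph{Step 1: the realization.} I first identify the intrinsic bridge realization of $\widetilde\Sigma$ with a subspace of $Z$. By Lemma~\ref{lem:gluing}, $D_{[n_i,n_{i+1}]}$ agrees with $d_Z$ on $V_{n_i}\sqcup V_{n_{i+1}}$, and each new bridge restricts to $d_{n_i}$ and $d_{n_{i+1}}$. Gluing these new bridges and applying Lemma~\ref{lem:gluing} again gives a chain formula: a minimizing chain of $\widetilde\Sigma$-bridges expands into a chain of original bridges, and conversely any chain of original bridges from $V_{n_i}$ to $V_{n_k}$ passes through the selected intermediate layers. So the glued metric is the restriction of $d_Z$ to $\bigcup_iV_{n_i}$, and the realization $\widetilde Z$ is the closure of this union in $Z$.

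\emph{Step 2: the same measured limit.} Since $a_{n_i}\to0$ and $m_{n_i}\rightharpoonup m$ in $Z$, hence in $\widetilde Z$, we get \eqref{eq:convergence} with the same $(M,d,m)$. The composed kernels $P_{n_i}\cdots P_{n_{i+1}-1}$ have marginals $m_{n_i}$ and $m_{n_{i+1}}$. The tail diameter bound persists because $\widetilde Z_{\ge R}\subset Z_{\ge R}$. The comparison range is therefore common, and $W$ and $U_n$ on the selected layers coincide with those of $\Sigma$.

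\emph{Step 3: the chain of equivalences.} I then argue
\[
\widetilde\Sigma \text{ satisfies } \SCD(K,N) \Leftrightarrow (M,d,m) \text{ satisfies } \CDe(K,N) \Leftrightarrow \Sigma \text{ satisfies } \SCD(K,N).
\]
The second equivalence is Theorem~\ref{thm:characterization} for $\Sigma$. For the first, Theorem~\ref{thm:characterization} cannot be quoted verbatim, because $\widetilde\Sigma$ need not be a complete refinement system: grouped steps may violate \eqref{eq:reference-displacement} with a uniform $C$, and there is no stationary rule. So I check which ingredients of that theorem's proof actually use the refinement structure.

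\begin{itemize}
\item Direction $\CDe\Rightarrow\SCD$ for $\widetilde\Sigma$. Proposition~\ref{prop:window-recovery} uses only the couplings $\beta_{n_i}$ with $h_{n_i}\to0$, which come from Lemma~\ref{lem:upgrade} for $\Sigma$. These are the same couplings, so the argument transfers verbatim.
\item Direction $\SCD\Rightarrow\CDe$ for $\widetilde\Sigma$. The proof of (2)$\Rightarrow$(4) uses only the following: fixed-endpoint recovery (Lemma~\ref{lem:recovery}, again needing only $h_{n_i}\to0$); compactness of $Z_{\ge R}$; the bound $a_{n_i}\to0$; joint upper semicontinuity of $U_N$; and Lemma~\ref{lem:dyadic}, which is intrinsic to $M$. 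Geodesicity of $M$ is inherited from $\Sigma$ via Lemma~\ref{lem:geodesic-limit}.
\end{itemize}

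\emph{Step 4: the midpoint reduction.} The implication $\SCDmid\Rightarrow\SCD$ is not needed separately: $\SCD$ implies $\SCDmid$ trivially, and $\CDe$ gives $\SCD$ directly. This avoids needing Theorem~\ref{thm:admissible-existence} for $\widetilde\Sigma$. Even so, Remark~\ref{rem:direct-scope} would let us use it once nonemptiness of admissible classes is known, and Proposition~\ref{prop:window-recovery} supplies admissible sections anyway when $\CDe$ holds.

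The main obstacle I expect is Step 1: verifying that the glued metric of $\widetilde\Sigma$ really equals the restriction of $d_Z$, and not something larger. The point that needs care is the converse of the chain formula, namely that every original chain can be cut at the selected layers. The chain formula \eqref{eq:monotone-chain} is what I would use for this. The remaining risk is a hidden use of \eqref{eq:reference-displacement} in the convergence theory for $\widetilde\Sigma$. I would address it by routing every such use through the original system $\Sigma$ and its couplings $\beta_n$ restricted to the selected indices.
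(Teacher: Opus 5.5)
Your proposal is correct and follows essentially the same route as the paper. You identify the grouped realization with the closure of $\bigcup_iV_{n_i}$ in $Z$ via the chain formula \eqref{eq:monotone-chain}, check the composed kernels and the inherited convergence, and then pass through $\CDe(K,N)$ on the common limit in both directions: the inherited couplings of radius $h_{n_i}$ with Proposition~\ref{prop:window-recovery} give one direction, and the $J=2$ recovery, compactness and semicontinuity argument with Lemma~\ref{lem:dyadic} gives the other. Your observation that neither Theorem~\ref{thm:admissible-existence} nor \eqref{eq:reference-displacement} is needed for $\widetilde\Sigma$ matches the paper's closing remark that these arguments use only the inherited metric, convergence and couplings, with no generating rule for the selected layers.
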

\begin{proof}
We first verify the reference marginals and the intrinsic bridge
metric. Write $\widetilde V_i=V_{n_i}$ and set
\[
 \widetilde P_i=P_{n_i}\cdots P_{n_{i+1}-1},
 \qquad
 \widetilde Q_i=Q_{n_{i+1}-1}\cdots Q_{n_i}.
\]
Each product consists of a single kernel when $n_{i+1}=n_i+1$.
The reference marginal identities give
\[
 m_{n_i}\widetilde P_i=m_{n_{i+1}},\qquad
 m_{n_{i+1}}\widetilde Q_i=m_{n_i}.
\]
The defining relation between $P_k$ and $Q_k$ also yields
\[
 \widetilde\gamma_i(x,y)
 :=m_{n_i}(x)\widetilde P_i(x,y)
 =m_{n_{i+1}}(y)\widetilde Q_i(y,x).
\]
Thus $\widetilde\gamma_i$ is a reference coupling with the required
marginals.

Let $\widetilde d$ be the intrinsic bridge metric obtained from the
grouped bridges. By Lemma~\ref{lem:gluing}, each grouped bridge is
a restriction of $d_Z$. Every chain through the grouped bridges
has length at least the $d_Z$-distance between its endpoints, so
$d_Z(x,y)\le\widetilde d(x,y)$.

For the reverse inequality, take $x\in V_{n_i}$ and
$y\in V_{n_j}$ with $i<j$. Formula~\eqref{eq:monotone-chain}
gives a shortest chain $x_k\in V_k$, $n_i\le k\le n_j$,
with $x_{n_i}=x$ and $x_{n_j}=y$. Split this chain at the selected
indices. On each block, the grouped bridge distance is at most the
sum of the original bridge lengths. Hence
\begin{align*}
 \widetilde d(x,y)
 \le\sum_{\ell=i}^{j-1}
      D_{[n_\ell,n_{\ell+1}]}(x_{n_\ell},x_{n_{\ell+1}})
 \le\sum_{k=n_i}^{n_j-1}b_k(x_k,x_{k+1})
 = d_{Z}(x,y).
\end{align*}
On a single selected layer, both metrics equal its given metric.
They therefore agree on all selected points, including the
identifications at zero distance.

The new completion is consequently isometric to
$\overline{\bigcup_iV_{n_i}}\subset Z$. It contains $M$, since
$d_H^Z(V_{n_i},M)=a_{n_i}\to0$. The original Hausdorff and weak
convergence restrict to this closed set, giving
\[
 d_H(V_{n_i},M)\longrightarrow0,
 \qquad m_{n_i}\rightharpoonup m.
\]
Choose $i_0$ so that $n_{i_0}\ge R$. The selected tail from $i_0$
onward inherits the same diameter bound $L$ and comparison range.

It remains to compare the entropy conditions. By
\eqref{eq:winfty-convergence}, the original system admits couplings
to $m$ with radii $h_n=W_{\infty,Z}(m_n,m)\to0$.
The selected layers inherit these couplings with radii $h_{n_i}$.
If $\Sigma$ satisfies $\SCD(K,N)$, then $M$ satisfies $\CDe(K,N)$
by Theorem~\ref{thm:characterization}. Lift the selected endpoint
measures to $M$ and project one comparison geodesic to the selected
intermediate layers. The proof of
Proposition~\ref{prop:window-recovery} then gives the required
admissibility and entropy comparisons. Its errors tend uniformly
to zero because $\sup_{k\ge i}h_{n_k}\to0$.

Conversely, suppose the selected layers satisfy those comparisons.
Use the case $J=2$ and recover fixed finite-entropy endpoints on
$M$ at the selected layers with indices $i$ and $i+2$.
Lemma~\ref{lem:recovery} gives convergence of both endpoint
measures and their entropy values. Choose a sequence of starting
indices along which the comparison tolerances tend to zero.
Compactness and joint upper semicontinuity of the entropy power
then give an exact entropy comparison midpoint on $M$, by the
limit argument in the proof of Theorem~\ref{thm:characterization}.
Lemma~\ref{lem:dyadic} yields $\CDe(K,N)$ on $M$, and
Theorem~\ref{thm:characterization} gives $\SCD(K,N)$ for $\Sigma$.
These two arguments use the inherited metric, convergence, and
reference couplings; they require no generating rule for the
selected layers.
\end{proof}

\begin{remark}
The proposition concerns the comparison data on the selected layers.
Grouping need not preserve the original stationary local rule or
the displacement estimate \eqref{eq:reference-displacement}
with the same constant.
The composed reference coupling need not be optimal
for the grouped cost either.
\end{remark}

\subsection{Finite comparison defects}

The admissible parameter region does not determine the comparison
errors on a given finite segment. We record the least common
tolerance for admissibility and all entropy comparisons, uniformly
over the endpoint measures.

Fix $r\ge R$ and $J\ge2$. For a probability section
$\boldsymbol\mu=(\mu_0,\ldots,\mu_J)$, use the action
$\mathcal A_{r,J}(\boldsymbol\mu)$ and endpoint distance
$D=W(\mu_0,\mu_J)$ from \eqref{eq:section-action}, and put
\[
 \Phi_j=\sigma_{K/N}^{(1-j/J)}(D)U_r(\mu_0)
       +\sigma_{K/N}^{(j/J)}(D)U_{r+J}(\mu_J).
\]
Here $U_{r+j}(\mu_j)=U_N(\mu_j\mid m_{r+j})$. Define
\begin{equation}
 \delta_{r,J}^{K,N}
 =\sup_{\mu_0,\mu_J}\ \inf_{\mu_1,\ldots,\mu_{J-1}}
 \max\left\{
 \frac{\mathcal A_{r,J}(\boldsymbol\mu)-D^2}{L^2},
 \max_{1\le j<J}(\Phi_j-U_{r+j}(\mu_j))_+
 \right\}.
 \label{eq:finite-defect}
\end{equation}
The supremum is over $\mu_0\in\Pp(V_r)$ and
$\mu_J\in\Pp(V_{r+J})$; the infimum is over all
$\mu_j\in\Pp(V_{r+j})$, $1\le j<J$.
The action excess is nonnegative by \eqref{eq:action-lower-bound}.

Both extrema are attained. To see this, write
$e=(\mu_0,\mu_J)$ for the endpoints and $\boldsymbol\nu$ for the
interior measures. The quantity being minimized is a continuous
function $F(e,\boldsymbol\nu)$ on a compact product of finite
probability simplexes. Continuity of the entropy powers follows
from the full support of the reference measures. Thus
$g(e)=\min_{\boldsymbol\nu}F(e,\boldsymbol\nu)$ exists and satisfies
\[
 |g(e)-g(e')|
 \le\sup_{\boldsymbol\nu}
       |F(e,\boldsymbol\nu)-F(e',\boldsymbol\nu)|.
\]
Uniform continuity of $F$ makes $g$ continuous, so it attains a
maximum on the endpoint space.
Consequently, for $\eps>0$, the bound
$\delta_{r,J}^{K,N}\le\eps$ is equivalent to the existence, for
every endpoint pair, of a section in
$\mathcal G_{r,J}^{\eps}(\mu_0,\mu_J)$ satisfying all entropy
comparisons with tolerance $\eps$.

For $J=2$, this defect compares measures on three consecutive
layers. The defect $\Delta_n(K,N)$ of \eqref{eq:single-defect}
instead places all three measures on one layer. Their finite
values need not agree.

\begin{proposition}[Defect characterization]
\label{prop:defect-characterization}
Under the standing assumptions, 
we have the following equivalence where 
$r \to \infty$, 
\[
 \Sigma\text{ satisfies }\SCD(K,N)
 \quad\Longleftrightarrow\quad
 \delta_{r,J}^{K,N}\to0\ \text{for every fixed }J\ge2
 \quad\Longleftrightarrow\quad
 \delta_{r,2}^{K,N}\to0.
\]
Furthermore, suppose in addition that \eqref{eq:convergence} holds and the limit
satisfies $\CDe(K,N)$. Retain $h_n$ from
\eqref{eq:winfty-convergence} and set
\[
 \widehat h_r=\sup_{n\ge r}h_n,\qquad
 \eta_r=\widehat h_r/L.
\]
Let $B_{\kappa,L}$ be the derivative bound in
Proposition~\ref{prop:window-recovery}, with $\kappa=K/N$.
Then, for every $r\ge R$ and integer $J\ge2$,
\begin{equation}
 \delta_{r,J}^{K,N}\le
 \max\{4(J+1)\eta_r+4J^2\eta_r^2,\;
          4LB_{\kappa,L}\eta_r\}.
 \label{eq:defect-rate}
\end{equation}
\end{proposition}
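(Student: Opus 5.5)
The plan is to reduce both equivalences to the observation recorded just before the statement: since both extrema in \eqref{eq:finite-defect} are attained, for every $\eps>0$ the inequality $\delta_{r,J}^{K,N}\le\eps$ holds exactly when every endpoint pair on the segment starting at $r$ admits a section in $\mathcal G_{r,J}^{\eps}(\mu_0,\mu_J)$ satisfying all entropy comparisons \eqref{eq:scd-entropy} with tolerance $\eps$.

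With this in hand, $\SCD(K,N)$ says that for each $J\ge2$ and $\eps>0$ there is $r_0$ with $\delta_{r,J}^{K,N}\le\eps$ for all $r\ge r_0$. This is precisely $\delta_{r,J}^{K,N}\to0$ as $r\to\infty$ for each fixed $J$, so the first equivalence holds. In the same way, $\delta_{r,2}^{K,N}\to0$ is a restatement of $\SCDmid(K,N)$. Here we use that an $\eps$-admissible section with $J=2$ is an $\eps$-admissible midpoint, and that $\Phi_1=S_{K/N}(D)[U_r(\mu_0)+U_{r+2}(\mu_2)]$, since $\sigma^{(1/2)}_\kappa=S_\kappa$. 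The second equivalence therefore follows from Theorem~\ref{thm:direct-midpoint}. The only care needed is attainment: without it, an infimum below $\eps$ would give tolerance $\eps'$ slightly larger than $\eps$, which is harmless anyway.

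For the quantitative bound \eqref{eq:defect-rate}, fix $r\ge R$, $J\ge2$ and endpoints, and take the section built in the proof of Proposition~\ref{prop:window-recovery} with $h=\widehat h_r$. That proposition gives action excess at most $4(J+1)L\widehat h_r+4J^2\widehat h_r^2$. Dividing by $L^2$ and writing $\widehat h_r=L\eta_r$ yields $4(J+1)\eta_r+4J^2\eta_r^2$. The same section has entropy comparison error at most $4B_{\kappa,L}\widehat h_r=4LB_{\kappa,L}\eta_r$ at every interior layer. The maximum inside \eqref{eq:finite-defect} is therefore bounded by the stated maximum for this particular section, so the infimum over interior measures is bounded by it as well. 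The bound is uniform in the endpoints, so it also bounds the supremum.

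One point needs checking. Proposition~\ref{prop:window-recovery} assumes couplings with radius $h_n$ for every $n\ge R$, and we use $\widehat h_r=\sup_{n\ge r}h_n$, which is finite because $h_n\le L$. These couplings come from attainment of $W_{\infty,Z}$, as noted after Lemma~\ref{lem:upgrade}. The main (mild) obstacle is confirming that the constructed section keeps the original endpoints $\mu_0,\mu_J$, so that $D$ and $\Phi_j$ are those of \eqref{eq:finite-defect}. That proof indeed keeps the original endpoints, so no further adjustment is needed.
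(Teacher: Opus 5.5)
Your proposal is correct and follows the paper's proof essentially step for step. The equivalences come from the attainment interpretation of \eqref{eq:finite-defect} combined with Theorem~\ref{thm:direct-midpoint}, and \eqref{eq:defect-rate} comes from taking the single section of Proposition~\ref{prop:window-recovery} (with the original endpoints), dividing its action bound by $L^2$, using $\widehat h_r=L\eta_r$, and then passing to the infimum over interior measures and the supremum over endpoints.
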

\begin{proof}
The interpretation of \eqref{eq:finite-defect} shows that vanishing
of the defects for every fixed $J$ is precisely
Definition~\ref{def:scd}. For $J=2$, it is the midpoint condition.
Theorem~\ref{thm:direct-midpoint} gives the remaining equivalence.
Its admissible sections are supplied by
Theorem~\ref{thm:admissible-existence} under the standing complete
refinement assumptions.

Under convergence, Lemma~\ref{lem:upgrade} gives $h_n\to0$, and
hence $\widehat h_r\to0$. When the limit satisfies $\CDe(K,N)$,
Proposition~\ref{prop:window-recovery} supplies, for each endpoint
pair, one section with action excess at most
\[
 4(J+1)L\widehat h_r+4J^2\widehat h_r^2
\]
and entropy deficit at most $4B_{\kappa,L}\widehat h_r$ at every
intermediate layer. Divide the action bound by $L^2$ and use
$\widehat h_r=L\eta_r$. Taking the minimum over sections and then
the maximum over endpoints proves \eqref{eq:defect-rate}.
\end{proof}

Under convergence, Theorem~\ref{thm:characterization} also makes
these conditions equivalent to $\Delta_n(K,N)\to0$ and to
$\CDe(K,N)$ on the limit. 
Thus, within the chosen comparison range, 
the measured limit determines $\mathcal C(\Sigma)$, 
while the finite defects and their rates retain 
information about the chosen refinement sequence.

When the limit satisfies $\CDe(K,N)$, the estimates of
Section~\ref{sec:quantitative-recovery} give explicit bounds for
\eqref{eq:defect-rate}. Under the partition
hypotheses of Theorem~\ref{thm:cell-rate},
\[
 \widehat h_r\le\sup_{n\ge r}
 \left(\rho_n+\frac{4L}{c_{K,N,L}}e_n^{1/N}\right),
\]
where $\rho_n$ is the cell radius and $e_n$ is the reference mass
discrepancy. Alternatively, an endpoint-control estimate
$h_n\le C_*\ell_n$ for all sufficiently large $n$ gives
$\widehat h_r\le C_*\ell_r$ for all sufficiently large $r$, since
$\ell_n$ is nonincreasing.
Either bound can be substituted into \eqref{eq:defect-rate}.
The estimate also permits integer segment lengths $J=J(r)\ge2$ to grow:
$\delta_{r,J(r)}^{K,N}\to0$ whenever $J(r)\eta_r\to0$.
This is a sufficient quantitative condition beyond the fixed-length
comparisons in the definition of SCD.

The defects above use the fixed normalization $L^2$ for the action.
For the single-layer midpoint tests,
Corollary~\ref{cor:relative-errors} instead compares the action
excess with $D^2$. Its relative action and entropy errors vanish
uniformly for $D\ge r_n$ whenever $h_n=o(r_n)$.
The endpoint measures may vary with $n$; the scale $r_n$ specifies
which pairs are included. Corollary~\ref{prop:nearest-scale}
shows that absolute defects can vanish while the relative action
defect stays positive at the nearest-neighbor scale on a finite layer.

\begin{remark}[Relative comparison errors]
For general interior times of interlayer sections, a further
question is to obtain analogous relative estimates on prescribed
families of endpoint pairs and interior times.
For a probability section, write $t=j/J$, $U_0=U_r(\mu_0)$,
$U_t=U_{r+j}(\mu_j)$, and $U_1=U_{r+J}(\mu_J)$, and define
\[
 E=\bigl[\sigma_{K/N}^{(1-t)}(D)U_0
       +\sigma_{K/N}^{(t)}(D)U_1-U_t\bigr]_+,
 \qquad 0<t<1.
\]
One may seek a choice of one probability section for each
prescribed endpoint pair such that
\begin{equation}
 \sup_{D>0}
 \frac{E}{t(1-t)D^2U_*}\longrightarrow0,
 \qquad U_*=\max\{U_0,U_t,U_1\},
 \label{eq:relative-error-target}
\end{equation}
as $r\to\infty$, together with uniformly vanishing relative
action defects for the same sections.
The prescribed families must specify the endpoint scales and
interior times.
Such estimates would quantify the entropy comparison at the
transport scale, beyond the absolute defects considered here.
\end{remark}

\section{Examples of refinement sequences}
\label{sec:examples}

In this section, we discuss examples that illustrate the roles of 
the refinement rule, the layer metrics, and the reference measures in the SCD condition. 
Adaptive interval refinement gives an exact curvature range and explicit midpoint error bounds. 
Two refinements of the same initial hypercube graph 
show how the choice of layer metrics affects the curvature--dimension condition. 
We then present two examples separating endpoint control, 
complete resolution, and the entropy comparison. 
Finally, the Goldberg--Coxeter subdivision sequence of $C_{60}$ exhibits 
a local midpoint obstruction to $\mathrm{SCD}(0,N)$ for every finite $N$, 
established directly from the refinement layers 
without assuming the existence of a limiting space.

\subsection{Weighted interval}
\label{sec:adaptive-weighted-interval}

Starting from $V_0=\{0,1\}$, during each refinement step, 
insert finitely many points into selected
intervals between consecutive vertices, retaining all old vertices.
Write
\[
 V_n=\{0=x_{n,0}<\cdots<x_{n,k_n}=1\},\qquad
 \ell_n=\max_{i<k_n}(x_{n,i+1}-x_{n,i}),
\]
and assume $\ell_n\to0$.
The selected intervals and cut positions are marks for the same local rule. 
For example, 
inserting every midpoint at each step gives binary refinement.
Join consecutive vertices by edges of their Euclidean lengths
and use coordinate distances for the bridges:
\[
 d_n(x,y)=|x-y|,\qquad b_n(x,y)=|x-y|.
\]
For these point elements, $r_n(x)=0$ and
$s_n(x)=\max_{z\sim x}|x-z|$.
Since every new edge lies in an old interval,
the local resolution condition \eqref{eq:local-resolution} holds,
with maximal scale $\ell_n$.

Fix $\psi\in C^2([0,1])$ and set
\[
 m=Z_\psi^{-1}e^{-\psi(x)}\,\dd x,\qquad
 Z_\psi=\int_0^1e^{-\psi(x)}\,\dd x.
\]
We construct the reference measures by projecting the same probability
measure $m$ onto each vertex set $V_n$.
For $t\in[0,1]$, let $P_n(t)$ be the nearest vertex in $V_n$,
choosing the smaller vertex in case of a tie.
Each vertex $x$ receives all the mass of the interval
\[
 C_n(x)=\{t\in[0,1]:P_n(t)=x\}.
\]
Thus the discrete reference measure is $m_n=(P_n)_\#m$.

To couple two consecutive layers, we follow the same source mass
under the two projections: the mass at $t$ is assigned to $P_n(t)$
on layer $n$ and to $P_{n+1}(t)$ on layer $n+1$.
Consequently,
\begin{equation}\label{eq:adaptive-weighted-mass}
 m_n(x)=m(C_n(x)),\qquad
 \gamma_n(x,y)=m(C_n(x)\cap C_{n+1}(y)).
\end{equation}

This coupling has marginals $m_n,m_{n+1}$ and is optimal for
squared bridge distance, since both projections are nondecreasing.
Moreover,
\[
 \gamma_n(x,y)>0
 \quad\Longrightarrow\quad
 b_n(x,y)\le\frac{\ell_n+\ell_{n+1}}2\le\ell_n,
\]
which verifies \eqref{eq:reference-displacement}.

Nestedness gives $d_Z(x,y)=|x-y|$, with completion $[0,1]$.
Since $|P_n(x)-x|\le\ell_n/2$ and $(P_n)_\#m=m_n$,
\begin{equation}\label{eq:adaptive-weighted-convergence}
 d_H^Z(V_n,[0,1])\le\frac{\ell_n}{2},\qquad
 W_{\infty,Z}(m_n,m)\le\frac{\ell_n}{2}.
\end{equation}

\begin{proposition}[Weighted interval]
\label{prop:adaptive-weighted-interval}
Let $\Sigma$ be the refinement system constructed above.
For $1<N<\infty$,
set
\begin{equation}\label{eq:adaptive-weighted-curvature}
 K_*(N)=\min_{x\in[0,1]}
 \left\{\psi''(x)-\frac{\psi'(x)^2}{N-1}\right\}.
\end{equation}
For parameters in the standing comparison range,
\begin{equation}\label{eq:adaptive-weighted-equivalence}
 \Sigma\text{ satisfies }\SCD(K,N)\quad\Longleftrightarrow\quad K\le K_*(N).
\end{equation}
For $N=1$, the condition holds exactly when $\psi$ is constant
and $K\le0$. 
\end{proposition}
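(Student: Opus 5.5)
The plan is to reduce everything to Theorem~\ref{thm:characterization}. First we check its hypotheses for this system. It is a complete refinement, since $\ell_n\to0$, local resolution holds, and \eqref{eq:reference-displacement} holds with $C=1$. The convergence \eqref{eq:convergence} holds with $M=[0,1]$ by \eqref{eq:adaptive-weighted-convergence}, and $m$ has full support because $e^{-\psi}>0$. The tail diameter is $L=1$. Hence, in the standing comparison range, $\SCD(K,N)$ is equivalent to $\CDe(K,N)$ for the weighted interval $([0,1],|\cdot|,m)$. It then remains to identify when this one-dimensional space satisfies $\CDe(K,N)$.

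For $1<N<\infty$ the interval is nonbranching, so by Corollary~\ref{cor:classical-cd} (through \cite{EKS2015} and \cite{CavallettiMilman2021}) $\CDe(K,N)$ is equivalent to $\mathrm{CD}(K,N)$. For a weighted convex one-dimensional domain $e^{-\psi}\dd x$, $\mathrm{CD}(K,N)$ is the Bakry--\'Emery condition $\mathrm{Ric}_{N}=\psi''-\psi'^2/(N-1)\ge K$ pointwise. I would prove this directly rather than cite it.

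\emph{Sufficiency} ($K\le K_*(N)$). On the interval, Wasserstein geodesics are given by monotone rearrangement, $T_t=(1-t)\mathrm{id}+tT$. Along each transport ray the Jacobian is $J_t(x)=\bigl((1-t)+tT'(x)\bigr)e^{-\psi(T_t x)+\psi(x)}$. The function $e^{-\psi/(N-1)}$ satisfies $(e^{-\psi/(N-1)})''\le -\tfrac{K}{N-1}|\dot\gamma|^2 e^{-\psi/(N-1)}$ exactly under the condition, and the linear factor $(1-t)+tT'$ is affine. The standard Sturm/Bacher--Sturm splitting then gives the $\tau$- (or $\sigma$-) concavity of $J_t^{1/N}$. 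Integrating yields $\mathrm{CD}(K,N)$, and hence $\CDe(K,N)$.

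\emph{Necessity}. Suppose $\CDe(K,N)$ holds while $\psi''(x_0)-\psi'(x_0)^2/(N-1)<K$ at some point $x_0$, which by continuity we may take interior. We test with uniform measures $\mu_0,\mu_1$ on tiny intervals $[x_0-a\varepsilon,x_0+a\varepsilon]$ and $[x_0+\varepsilon-b\varepsilon,x_0+\varepsilon+b\varepsilon]$, choosing the widths so that the affine Jacobian part is nonconstant, and expand the midpoint entropy to second order in $\varepsilon$. On the interval the geodesic is unique, so the EKS inequality must hold along it. The optimal choice of the width ratio reproduces the $(N-1)$ term in the expansion, and a violation of the inequality appears. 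Matching constants in this expansion, especially the dimensional term, is the step I expect to be the main obstacle. The fallback is to cite the known equivalence of $\mathrm{CD}(K,N)$ on weighted intervals with $\mathrm{Ric}_N\ge K$ via \cite{Sturm2006II}.

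For $N=1$, a space satisfying $\mathrm{CD}(K,1)$ (or $\CDe(K,1)$) must have its measure a constant multiple of Hausdorff measure. Concretely, the $N=1$ test above with width ratio $\neq1$ forces $\psi'\equiv0$. When $\psi$ is constant, the uniform interval satisfies $\CDe(K,1)$ exactly when $K\le0$, since $\sigma^{(t)}_{K}(D)>t$ for $K>0$ violates equality at Dirac-like equal-width translates. Combining this with Theorem~\ref{thm:characterization} (which needs no nonbranching for $N=1$) finishes the $N=1$ case.
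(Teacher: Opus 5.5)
Your plan is correct and reaches the result by a different route from the paper's. The paper does not use Theorem~\ref{thm:characterization} or Corollary~\ref{cor:classical-cd} here.

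For sufficiency, the paper works directly on the layers:
\begin{enumerate}
\item it lifts the discrete endpoints to densities that are constant, relative to $m$, on the control cells;
\item it interpolates their quantile functions;
\item it proves the entropic inequality $H''\ge KA+H'^2/N$ directly, from the algebraic identity and Cauchy--Schwarz;
\item it projects the continuous midpoint back to $V_{n+1}$.
\end{enumerate}
This gives the explicit midpoint defect $\max\{4\ell_n^2,2C_S\ell_n\}$, and Theorem~\ref{thm:direct-midpoint} then yields $\SCD(K,N)$. For necessity, the paper applies $\SCDmid(K,N)$ to projections of one explicit test geodesic and passes to the limit.

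Your reduction to the known one-dimensional characterization is shorter given the paper's machinery, and it gives $\mathrm{CD}(K,N)$ on the limit as a by-product. The costs are the following.
\begin{enumerate}
\item It produces no rate.
\item For $N>1$, your sufficiency depends on the Erbar--Kuwada--Sturm equivalence inside Corollary~\ref{cor:classical-cd}. That equivalence is what takes you from pointwise $\tau$-concavity of $J_t^{1/N}$ to the entropic inequality with $D=W_2$; integration alone does not.
\item For $N=1$ that corollary is unavailable. You therefore still have to prove directly that the uniform interval satisfies $\CDe(0,1)$, which you only assert. Along the monotone interpolation, $H(t)=-\int_0^1\log\partial_sQ_t\,\dd s+\mathrm{const}$, and Cauchy--Schwarz gives $H''\ge H'^2$. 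This is the paper's one-line argument.
\end{enumerate}

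On the step you flagged, the scaling of the test widths matters. Take widths $a\eps$ and $b\eps$ with a fixed ratio $b/a=1+\beta$, $\beta\ne0$. When $N>1$ this test detects nothing:
\begin{itemize}
\item to leading order, the entropy power along the geodesic is a constant multiple of $(1+t\beta)^{1/N}$;
\item its strict concavity gives a midpoint surplus of order one;
\item the curvature term $S_\kappa(D)-\tfrac12$ is only $O(\eps^2)$.
\end{itemize}
The ratio must instead be $1+\alpha\eps$, so that the affine Jacobian factor varies at the scale of the displacement. The $\eps^2$ terms then give
\[
\alpha^2+\psi''(x_0)-\frac{(\psi'(x_0)-\alpha)^2}{N}\ge K .
\]
The choice $\alpha=-\psi'(x_0)/(N-1)$ yields $K\le\psi''(x_0)-\psi'(x_0)^2/(N-1)$.

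This is exactly the paper's choice $v=1+\alpha(x-x_0)$: its endpoint widths $a(1\pm\tau\alpha)$ differ by a relative amount of order $\tau$.

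For $N=1$ your fixed-ratio test does work. There the order-one surplus vanishes, and the next term is of order $\eps$, proportional to $\beta\psi'(x_0)\eps$. Choosing the sign of $\beta$ makes it negative wherever $\psi'(x_0)\ne0$. Your fallback of citing the known weighted-manifold characterization is also acceptable for necessity, since that argument is local in the interior of $[0,1]$.
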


\begin{proof}
We first prove sufficiency by verifying the midpoint condition
with explicit errors controlled by the mesh size.
For a probability $\xi$ on $V_k$, spread each atom over its control
interval with constant density relative to $m$:
\[
 \widehat\xi=\sum_{x\in V_k}
 \frac{\xi(x)}{m_k(x)}\mathbf1_{C_k(x)}m.
\]
This preserves entropy and gives
$W_2(\xi,\widehat\xi)\le\ell_k/2$.
Conversely, Jensen's inequality gives
$H_k((P_k)_\#\mu)\le\Ent_m(\mu)$.

Assume $N>1$ and $K\le K_*(N)$.
Fix $n$, $p\in\Pp(V_n)$ and $q\in\Pp(V_{n+2})$,
and set $D=W(p,q)$.
Let $Q_-,Q_+$ be the increasing quantiles of $\widehat p,\widehat q$,
and put
\[
 Q_t=(1-t)Q_-+tQ_+,\quad v=Q_+-Q_-,\quad
 A=\int_0^1v^2\,\dd s.
\]
Their positive-density pieces form a common finite partition of
$(0,1)$; jumps merely record gaps in the supports.
The entropy $H(t)$ of $(Q_t)_\#\dd s$ satisfies
\[
 H(t)=\int_0^1[-\log\partial_sQ_t+\psi(Q_t)+\log Z_\psi]\,\dd s.
\]
With $a_t=\partial_sv/\partial_sQ_t$ and $b_t=\psi'(Q_t)v$,
differentiation on these pieces gives
\[
 H'=\int_0^1(b_t-a_t)\,\dd s,\qquad
 H''=\int_0^1[a_t^2+\psi''(Q_t)v^2]\,\dd s.
\]
The identity
\[
 a_t^2+\frac{b_t^2}{N-1}-\frac{(b_t-a_t)^2}{N}
 =\frac{((N-1)a_t+b_t)^2}{N(N-1)}
\]
and Cauchy--Schwarz yield $H''\ge KA+H'^2/N$.
Thus $u=e^{-H/N}$ satisfies $u''+(K/N)Au\le0$, and comparison gives
\begin{equation}\label{eq:adaptive-weighted-auxiliary}
 u(1/2)\ge S_{K/N}(\sqrt A)
 \bigl[U_n(p)+U_{n+2}(q)\bigr].
\end{equation}
For $K>0$, the required bound $(K/N)A<\pi^2$ follows from $A\le1$
and the standing comparison range.

Take $\nu=(P_{n+1}\circ Q_{1/2})_\#\dd s$ and write
\[
 X_-=P_n\circ Q_-,\quad Y=P_{n+1}\circ Q_{1/2},\quad
 X_+=P_{n+2}\circ Q_+.
\]
These functions are nondecreasing, so their common parameter realizes
all three optimal couplings. The projection bounds give
\[
 |\sqrt A-D|\le\ell_n,\qquad
 \left|Y-\frac{X_-+X_+}{2}\right|\le\ell_n.
\]
Put $C_S=\operatorname{Lip}(S_{K/N}|_{[0,1]})$,
which is finite in the standing comparison range.
Jensen's inequality, \eqref{eq:adaptive-weighted-auxiliary},
and the projection bounds give
\begin{equation}\label{eq:adaptive-weighted-midpoint}
 U_{n+1}(\nu)\ge
 S_{K/N}(D)[U_n(p)+U_{n+2}(q)]-2C_S\ell_n.
\end{equation}
Moreover,
\begin{equation}\label{eq:adaptive-weighted-action}
\begin{aligned}
 2W(p,\nu)^2+2W(\nu,q)^2-D^2
 &=4\int_0^1
 \left|Y-\frac{X_-+X_+}{2}\right|^2\,\dd s \le4\ell_n^2.
\end{aligned}
\end{equation}
With $L=1$, the uniform error
$\max\{4\ell_n^2,2C_S\ell_n\}\to0$ verifies $\SCDmid(K,N)$;
Theorem~\ref{thm:direct-midpoint} gives $\SCD(K,N)$.
If $N=1$ and $\psi$ is constant, then
$H''=\int a_t^2\,\dd s\ge H'^2$, giving the same conclusion for $K\le0$.

For sharpness, fix $x_0\in(0,1)$ and an interval $I$ of length $a$
centered at $x_0$. For $\alpha\in\R$, put
\[
 \mu_0=a^{-1}\mathbf1_I\,\dd x,\qquad
 v(x)=1+\alpha(x-x_0),\qquad
 \mu_t=(\mathrm{id}+tv)_\#\mu_0,\qquad
 c^2=\int v^2\,\dd\mu_0.
\]
Choose $\tau>0$ so that these maps are increasing and their images
lie in $(0,1)$ for $|t|\le\tau$.
Assume $\SCDmid(K,N)$. Apply it with tolerances $\eta_i \to 0$
on segments starting at $n_i\to\infty$ to the projected endpoints
$p_i=(P_{n_i})_\#\mu_{-\tau}$ and
$q_i=(P_{n_i+2})_\#\mu_\tau$, obtaining $\nu_i$.
The quantile action identity gives
\[
 W_2(\nu_i,\mu_0)
 \le\frac{\sqrt{\eta_i}}2+
 \frac{\ell_{n_i}+\ell_{n_i+2}}4\longrightarrow0.
\]
The lifted laws $\widehat\nu_i$ also converge to $\mu_0$.
Entropy lower semicontinuity therefore yields
\[
 \Ent_m(\mu_0)\le\liminf_i H_{n_i+1}(\nu_i).
\]
For the projected endpoints, the same lifting argument and Jensen's
inequality give convergence of their entropies. Passing to the
midpoint inequality consequently yields
\[
 u(0)\ge S_{K/N}(2\tau c)\bigl[u(-\tau)+u(\tau)\bigr],
 \qquad u(t)=e^{-H(t)/N},\quad H(t)=\Ent_m(\mu_t).
\]
Expansion at $\tau=0$ gives $H''(0)-H'(0)^2/N\ge Kc^2$.
Since
\[
 H'(0)=-\alpha+\int\psi'v\,\dd\mu_0,\qquad
 H''(0)=\alpha^2+\int\psi''v^2\,\dd\mu_0,
\]
letting $a \to 0$ gives, for every $\alpha\in\R$,
\[
 K\le\psi''(x_0)+\alpha^2
       -\frac{(\psi'(x_0)-\alpha)^2}{N}.
\]
For $N>1$, minimize at $\alpha=-\psi'(x_0)/(N-1)$ and use continuity
at the endpoints to obtain $K\le K_*(N)$.
For $N=1$, varying $\alpha$ forces $\psi'\equiv0$, and then $K\le0$.
\end{proof}

\begin{remark}
By \eqref{eq:adaptive-weighted-convergence},
Theorem~\ref{thm:characterization} identifies the same entropic
curvature--dimension range on the limit interval; for $N>1$,
Corollary~\ref{cor:classical-cd} also gives the classical condition.
Three choices of density illustrate the result.
\begin{enumerate}
\item \emph{Uniform measure.} For $\psi\equiv0$, one has $K_*(N)=0$,
and every such refinement satisfies $\SCD(0,N)$ for $1\le N<\infty$.
Here $C_S=0$, so the entropy midpoint inequality is exact.
The masses are $|C_n(x)|$; on a uniform grid the endpoint masses
are half the interior masses.
\item \emph{Exponential weight.} For $\psi(x)=\beta x$, $\beta\ne0$,
the curvature bound is $K_*(N)=-\beta^2/(N-1)$ for $N>1$.
\item \emph{Centered Gaussian weight.} For
$\psi(x)=\tfrac12(x-\tfrac12)^2$, one has
$K_*(N)=1-[4(N-1)]^{-1}$. In particular, the sequence satisfies
$\SCD(3/4,2)$ and its limit satisfies $\mathrm{CD}(3/4,2)$.
\end{enumerate}
Thus the same interval metric and refinement rule admit zero,
negative, or positive curvature bounds through the reference measure.
\end{remark}

\subsection{Hypercube}
\label{sec:hypercube-refinements}

We now compare two refinements of hypercube.
Let $Q=[0,1]^d$ with $d \ge 2$.
Fix a sufficiently large $a_d$ that is a power of $2$.
Starting from $[0,1]$, divide each interval into $a_d$ equal
subintervals at every step. The resulting grid is
\[
 \varepsilon_{n}=a_d^{-n},\qquad I_n=\{0,\varepsilon_{n},\ldots,1\}.
\]
Take its coordinate product and retain the boundary points:
\[
 V_n=I_n^d\cap\partial Q.
\]
Thus a label belongs to $V_n$ exactly when at least one coordinate
is $0$ or $1$, and each facet cell splits into $a_{d}^{d-1}$ cells.
The two constructions use these same labels, with different
positions and connections.

\emph{Flat boundary refinement.}
Place each label $u$ at $u\in\partial Q$ and join nearest coordinate
neighbours on each facet by edges of length $\varepsilon_{n}$.
The path metric $d_n$ is the restriction of the intrinsic
$\ell^1$ metric $d_Q$ on $\partial Q$.

\emph{Projected boundary refinement.}
Let $c=(1/2,\ldots,1/2)$ and $R_d=\sqrt d/2$, and let $M$ be the
circumsphere of $Q$. Place the label $u$ at
\[
 P(u)=c+R_d\frac{u-c}{|u-c|}.
\]
The geometric vertex set is therefore $P(V_n)$, with
$|P(u)-c|=R_d$ for every vertex.
Subdivide the original parameter cells on $\partial Q$ and project
the new points; previously projected vertices keep their positions.
Set $r_n=\sqrt{\varepsilon_{n}}$ and join distinct labels $u,v$ whenever
$|P(u)-P(v)|\le r_n$, giving the edge this chord length.
Write $d_n$ for the resulting path metric.

For the masses, assign each point of a parameter facet to the grid
vertex obtained by rounding its free coordinates, with a fixed
convention for ties. Combine the regions assigned to the same vertex.
These are the flat control cells; their radial images give the
projected control cells. In either construction, denote the cells by
$C_n(u)$ and the cell-label map by $\pi_n$.
Let $\sigma$ be normalized facet volume or normalized spherical
area, respectively, so that the total reference mass is one.
Define
\begin{equation}\label{eq:hypercube-cell-masses}
 m_n(u)=\sigma(C_n(u)),\qquad
 \gamma_n(u,v)=\sigma(C_n(u)\cap C_{n+1}(v)).
\end{equation}
These overlap couplings have the prescribed marginals.

For comparison between layers, use $b_n(u,v)=d_Q(u,v)$ in
the flat construction. In the projected construction, join each
old label to its next-layer copy with length $\tau_n=T_dr_n$,
where $T_d$ is sufficiently large, and let $b_n$ be the resulting
cross-layer path distance.
Both rules are stationary and local, with complete resolution; 
the overlap couplings satisfy \eqref{eq:reference-displacement}.
Both initial spaces are the unit-edge hypercube graph on
$V_0=\{0,1\}^d$, with $m_0(u)=2^{-d}$.

\begin{proposition}
\label{prop:hypercube-refinements}
Let $\Sigma_{\mathrm{flat}}$ and $\Sigma_{\mathrm{sph}}$
be the refinement systems above, then we have
\[
 \Sigma_{\mathrm{flat}} \text{ satisfies } \SCD(0,d-1),
 \qquad
  \Sigma_{\mathrm{sph}} \text{ satisfies } \SCD(\frac{4(d-2)}{d}, d-1).
\]
\end{proposition}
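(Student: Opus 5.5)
The plan is to route both claims through the limit characterization, Theorem~\ref{thm:characterization}, specifically the implication (4)$\Rightarrow$(1). For each construction we check the convergence \eqref{eq:convergence} in the intrinsic bridge realization and identify the limit $(M,d,m)$. Then we verify $\CDe(K,N)$ on that limit with the stated parameters, and confirm that the parameters lie in the comparison range \eqref{eq:strict-comparison-range} with the tail diameter $L$.

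\emph{Flat case.} The layers are nested, since old labels are retained. The bridge is the restriction of $d_Q$, so Lemma~\ref{lem:gluing} gives $d_Z=d_Q$ on $\bigcup_n V_n$, and the completion is $\partial Q$ with its intrinsic metric. The control cells have radius $O(\varepsilon_n)$ in $d_Q$, and $m_n(u)=\sigma(C_n(u))$ exactly, so Theorem~\ref{thm:cell-rate} (or Proposition~\ref{prop:local-control}, using $\ell_{n+1}=\ell_n/a_d$) gives $h_n=O(\varepsilon_n)\to0$. In particular \eqref{eq:convergence} holds with $M=\partial Q$ and $m$ the normalized facet volume.

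It then remains to show that $\partial Q$ with its intrinsic metric and Hausdorff measure is $\CDe(0,d-1)$. The boundary of a convex body is an Alexandrov space of nonnegative curvature of dimension $d-1$; this follows from Alexandrov's theorem on convex surfaces, as the limit of smooth convex hypersurfaces. By Petrunin's theorem it satisfies $\mathrm{CD}(0,d-1)$. Alexandrov spaces are nonbranching, so \cite[Theorem~3.12]{EKS2015} together with the reduced/full equivalence gives $\CDe(0,d-1)$. For $K=0$ there is no comparison-range constraint.

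\emph{Projected case.} Here the limit should be the round sphere $M$ of radius $R_d=\sqrt d/2$ with its intrinsic (great-circle) metric. The main obstacle is showing that the intrinsic bridge metric $d_Z$ restricted to $M$ is the geodesic metric of $M$, and not the chord metric or something larger. The first step is to show $d_n\to$ the geodesic distance uniformly on $P(V_n)$. Because $r_n=\sqrt{\varepsilon_n}$ is much larger than the mesh $\varepsilon_n$, the projected grid is $O(\varepsilon_n)$-dense and the $r_n$-neighbourhood graph connects points along near-geodesic chains. Chord length and arc length differ by a factor $1+O(r_n^2)$, which yields $|d_n-d_{S}|=O(r_n)$ after controlling the detour cost by density. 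The second step concerns the bridges: old labels keep their positions, and the cross-layer edges of length $\tau_n=T_dr_n\to0$ make $D_{[n,n+1]}$ differ from the spherical distance by $O(r_n)$. With $T_d$ large, these edges never give shortcuts below the spherical distance, using the lower bound $d_n\ge d_S-O(r_n)$. Summing along chains then shows $d_Z=d_S$ on the union, since the errors are controlled by the summable sequence $\sum_k r_k$, which is geometric.

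Measure convergence follows from Theorem~\ref{thm:cell-rate}. The projected cells have spherical diameter $O(\varepsilon_n)$ and masses exactly $\sigma(C_n(u))$, so $h_n\to0$ and $M$ carries normalized spherical area. The round sphere of dimension $d-1$ and radius $R_d$ has $\mathrm{Ric}=(d-2)/R_d^2=4(d-2)/d$, so it satisfies $\CDe(4(d-2)/d,d-1)$ by \cite{EKS2015}, as a Riemannian manifold with this Ricci bound and dimension.

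Finally we check the comparison range. The diameter is $L=\pi R_d$ (the tail closure is $M$ up to $O(r_R)$, and we choose $R$ large), so $KL^2=4(d-2)\pi^2/4=(d-2)\pi^2<(d-1)\pi^2=N\pi^2$. We take $L$ slightly above $\pi R_d$, which is possible since the inequality is strict. Theorem~\ref{thm:characterization} then yields $\SCD$ in both cases.
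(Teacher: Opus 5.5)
\textbf{The flat case has a genuine gap.} In that construction the grid edges of length $\varepsilon_n$ measure $\ell^1$ length on each facet. The paper says explicitly that $d_n$ is the restriction of the \emph{intrinsic $\ell^1$ metric} $d_Q$ on $\partial Q$. So the limit $(\partial Q,d_Q,\sigma)$ is, away from the faces where three or more facets meet, locally a $(d-1)$-dimensional $\ell^1$ normed space. It is not the boundary of a convex body with its Euclidean intrinsic metric.

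For $d\ge3$ this space is not an Alexandrov space, because its geodesics branch. On a single facet, two monotone staircase paths with the same endpoints can share an initial segment and then separate, and both are $d_Q$-minimizing. Alexandrov spaces with curvature bounded below are nonbranching. Consequently, neither Alexandrov's theorem on convex surfaces nor Petrunin's theorem applies. The nonbranching hypothesis under which \cite[Theorem~3.12]{EKS2015} converts $\mathrm{CD}^*$ into $\CDe$ is also not available. Your chain to $\CDe(0,d-1)$ therefore breaks at its first step. For $d=2$ there is no issue, since $\partial Q$ is then a circle of length $4$.

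What is missing is a direct proof of the $K=0$, $N=d-1$ entropy-power midpoint comparison on this branching, non-Riemannian polyhedral surface. The paper supplies it in the following steps.
\begin{itemize}
\item It replaces $\ell^1$ by the smooth, strongly convex norms $F_\eta$. These are invariant under signed coordinate permutations, so adjacent facets unfold to flat Minkowski charts.
\item It shows that $F_\eta$-minimizers between regular points avoid the set $S$ where three or more facets meet.
\item It applies Ohta's Finsler Jacobian comparison \cite{Ohta2009}, whose distortion coefficients here equal one, together with concavity of the geometric mean. This gives $U_{d-1}(\mu_{1/2}\mid\sigma)\ge\frac12[U_{d-1}(\mu_0\mid\sigma)+U_{d-1}(\mu_1\mid\sigma)]$.
\item It extends this to all finite-entropy endpoints using $\sigma(S)=0$.
\item It ties $\eta=\varepsilon_n^2/d^2$ to the layer, so the $d_\eta$-versus-$d_Q$ error is $O(\varepsilon_n)$. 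It then checks $\SCDmid(0,d-1)$ directly by cell lifting and projection through $\pi_{n+1}$.
\end{itemize}
Once you have such a comparison, you could still finish along your route: let $\eta\to0$ using compactness and semicontinuity, apply Lemma~\ref{lem:dyadic}, and then Theorem~\ref{thm:characterization}. The Alexandrov shortcut, however, is not available.

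\textbf{The projected case is sound.} It uses the same mechanism as the paper, which carries out by hand the lift--midpoint--project step that Proposition~\ref{prop:window-recovery} packages. One correction: $d_Z$ is not equal to the spherical distance on the union of layers. Since $d_Z|_{V_n}=d_n$ is a sum of chord lengths, what actually holds is $|d_Z(\pi_ix,\pi_jy)-d_M(x,y)|\le C_dr_n$. This still identifies $M$ isometrically with the round sphere. It also makes the cell radii in $d_Z$ of order $r_n$ rather than $\varepsilon_n$, which is still enough for $h_n\to0$.
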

\begin{proof}
We verify the midpoint condition for each refinement system 
and apply Theorem~\ref{thm:direct-midpoint}.

\emph{Flat construction.}
For $\eta>0$, let $d_\eta$ be the length metric on $X=\partial Q$
induced by
\[
 F_\eta(v)=\sum_{i=1}^d\sqrt{v_i^2+\eta|v|_2^2},\qquad
 d_Q\le d_\eta\le(1+d\sqrt\eta)d_Q.
\]
This norm is smooth, strongly convex, and invariant under signed
coordinate permutations. Thus $\mathcal U=X\setminus S$ is locally
Minkowski with constant reference density, where $S$ consists of
intersections of three or more facets.

Minimizers between regular points avoid $S$.
Indeed, near a $k$-fold intersection, $k\ge3$, the local model is
$\partial\mathbb R_+^k\times\mathbb R^{d-k}$.
For nearby points $p,r$ on opposite sides of the origin along a
minimizer, the intervening length is $F_\eta(p)+F_\eta(r)$.
If $p,r$ share a zero normal coordinate, their chord shortens the
path unless they are oppositely collinear, forcing all normal
coordinates to vanish.
Otherwise, choose $p_i=r_j=0$ and unfold the second facet by
$(r_i,0)\mapsto(0,-r_i)$.
Its chord folds back to a shorter path: equality forces the other
$k-2\ge1$ normal coordinates to vanish, contradicting this case.
Hence a minimizer meeting $S$ locally remains in its stratum;
its inverse image of $S$ is open and closed.

For finite-entropy endpoints compactly supported in $\mathcal U$,
the minimizing geodesics
have compact union $G\subset\mathcal U$.
Choose smooth $\lambda\ge1$, equal to one near $G$ and at least
$\operatorname{dist}_{\mathbb R^d}(\cdot,S)^{-1}$ near $S$
($\lambda=1$ if $S=\varnothing$).
Then $\lambda F_\eta$ is complete and preserves the relevant
distances and geodesics. Keep $\sigma$ unchanged.
The Finsler Jacobian comparison (see \cite{Ohta2009}), whose volume-distortion
coefficients here equal one, gives an optimal interpolation
$\mu_t=(T_t)_\#\mu_0=\rho_t\sigma$ satisfying
\[
 \rho_{1/2}(T_{1/2}x)^{-1/(d-1)}
 \ge\tfrac12\bigl[\rho_0(x)^{-1/(d-1)}
                  +\rho_1(T_1x)^{-1/(d-1)}\bigr].
\]
For $d = 2$, this is the one-dimensional Jacobian inequality on a
circle. The geometric mean
$\mathcal G(f)=\exp(\int\log f\,d\mu_0)$ satisfies
\[
 \mathcal G\bigl((f+g)/2\bigr)
 \ge\tfrac12\bigl[\mathcal G(f)+\mathcal G(g)\bigr],
\]
by logarithmic concavity with weights proportional to
$\mathcal G(f)$ and $\mathcal G(g)$. Consequently,
\begin{equation}\label{eq:hypercube-flat-entropy-midpoint}
 U_{d-1}(\mu_{1/2}\mid\sigma)
 \ge\tfrac12\bigl[U_{d-1}(\mu_0\mid\sigma)+U_{d-1}(\mu_1\mid\sigma)\bigr].
\end{equation}
Restricting and normalizing finite-entropy endpoints on compact
sets exhausting $\mathcal U$ gives convergence of their entropies.
Compactness of optimal midpoints and lower semicontinuity of entropy
extend \eqref{eq:hypercube-flat-entropy-midpoint} to arbitrary
finite-entropy endpoints, since $\sigma(S)=0$.

Now fix arbitrary endpoint laws $p_-,p_+$ on layers $n,n+2$.
Their uniform cell lifts satisfy
\[
 \Ent_\sigma(\widetilde p_-)=H_n(p_-),\qquad
 \Ent_\sigma(\widetilde p_+)=H_{n+2}(p_+).
\]
Take $\eta=\varepsilon_{n}^2/d^2$, choose the midpoint
$\widetilde\nu$ above, and set
$\nu=(\pi_{n+1})_\#\widetilde\nu$.
Writing $D=W(p_-,p_+)$ and
$\widetilde D=W_{2,d_\eta}(\widetilde p_-,\widetilde p_+)$,
the metric comparison and cell radii give
\[
 |D-\widetilde D|\le C\varepsilon_{n},\qquad
 W(p_-,\nu),\ W(\nu,p_+)\le\widetilde D/2+C\varepsilon_{n}.
\]
Since $H_{n+1}(\nu)\le\Ent_\sigma(\widetilde\nu)$ by Jensen,
\begin{align*}
 2W(p_-,\nu)^2+2W(\nu,p_+)^2&\le D^2+C\varepsilon_{n},\\
 U_{n+1}(\nu)&\ge\frac{1}{2}[U_n(p_-)+U_{n+2}(p_+)].
\end{align*}
Here and below the layer entropy powers use dimension $d-1$.
The constant is independent of $p_-,p_+$; since $\varepsilon_{n}\to0$,
this proves $\SCDmid(0,d-1)$.

\emph{Projected construction.}
The cells have radius $O_d(\varepsilon_{n})$, and $c_dr_n\le s_n(u)\le r_n$;
choosing $a_d$ large gives local resolution.
Write $\widehat d(u,v)=d_M(P(u),P(v))$.
Sampling geodesics and comparing arcs with chords gives
\[
 e_n:=\sup_{u,v\in V_n}|d_n(u,v)-\widehat d(u,v)|
 \le C_d(\varepsilon_{n}/r_n+r_n^2)=O_d(r_n).
\]
Taking $\tau_n\ge e_n+e_{n+1}$ preserves both layer metrics and,
for $u\in V_n$, $v\in V_{n+1}$, gives
\[
 \widehat d(u,v)\le b_n(u,v)
 \le\widehat d(u,v)+\tau_n+e_{n+1}.
\]
Lemma~\ref{lem:gluing}, the cell radii, and
$\sum_{k\ge n}\tau_k=O_d(r_n)$ yield
\begin{equation}\label{eq:hypercube-spherical-metric-error}
 |d_Z(\pi_i x,\pi_j y)-d_M(x,y)|\le C_dr_n,
 \qquad i,j\in\{n,n+1,n+2\}.
\end{equation}
They also bound all tail distances by $\pi R_d+C_dr_n$ and give
$b_n=O_d(r_n)$ on the support of $\gamma_n$.
Let $K_{d} = 4(d-2)/d$. 
Since $K_{d}(\pi R_d)^2 < (d-1) \pi^2$, a tail diameter bound
$L>\pi R_d$ lies in the comparison range.

The sphere has $\operatorname{Ric} = K_{d}g$ and satisfies the
finite-entropy midpoint comparison at dimension $d-1$ (see \cite{EKS2015}).
Apply the same cell lifting and midpoint projection on $M$.
Estimate~\eqref{eq:hypercube-spherical-metric-error} gives
$|D-\widetilde D|\le Cr_n$ and
$W(p_-,\nu),W(\nu,p_+)\le\widetilde D/2+Cr_n$,
where $\widetilde D=W_{2,M}(\widetilde p_-,\widetilde p_+)$.
Jensen and Lipschitz continuity of $S_{K_d/(d-1)}$ on $[0,L]$ give
\begin{align*}
 2W(p_-,\nu)^2+2W(\nu,p_+)^2&\le D^2+Cr_n,\\
 U_{n+1}(\nu)&\ge S_{K_d/(d-1)}(D)
       [U_n(p_-)+U_{n+2}(p_+)]-Cr_n.
\end{align*}
Since $r_n\to0$, these uniform estimates prove $\SCDmid(K_d,d-1)$.
\end{proof}

\subsection{Further examples}
\label{sec:resolution-boundaries}
\paragraph{Oscillating marked point}
We add to a dyadic grid a marked point of fixed positive mass.
Fix $0<w<1$ and, for $n\ge2$, set
\[
 \eta_n=2^{-n},\qquad G_n=\{k\eta_n:0\le k<2^n\},
\]
\[
 z_n=\frac{2+(-1)^n}{\sqrt2\,(n+1)},\qquad
 R_n=\frac{3}{\sqrt2\,(n+1)},\qquad P_n=[0,R_n].
\]
Define
\[
 V_n=G_n\cup\{z_n\},\qquad
 m_n=(1-w)\eta_n\sum_{g\in G_n}\delta_g+w\delta_{z_n}.
\]
Each grid point $g$ carries $[g,g+\eta_n]$; the anchor $0$ and
the marked point $z_n$ also carry $P_n$.
Their geometric sizes are $R_n$, and all other sizes are $\eta_n$.
Join consecutive grid points, join $z_n$ to its two bracketing
grid points and to $0$, and join $0$ to every nonzero grid point
in $P_n$. Give these connections their Euclidean lengths and use
Euclidean bridges, so that
\[
 d_n(x,y)=|x-y|,\qquad b_n(x,y)=|x-y|.
\]
The reference coupling sends half the mass at $g$ to each of
$g$ and $g+\eta_n/2$, and sends the marked mass to $z_{n+1}$.

The rule uses the marks
$(t,\varepsilon)=(1/(n+1),(-1)^n)$, updated by
$(t,\varepsilon)\mapsto(t/(1+t),-\varepsilon)$.
The stencil of a new grid point contains its old source and,
when that source lies in $P_n$, the anchor. The new anchor and
marked point use the old patch; the marked point also uses the
old cell containing its new position. These stencils have uniformly
bounded adjacency radius.

\begin{proposition}
\label{prop:endpoint-strictly-weaker}
\label{prop:atomic-limit-no-scd}
This construction is a complete refinement satisfying
\eqref{eq:reference-displacement}, with maximal scale $\ell_n=R_n$
and endpoint modulus $\omega_{n}=\eta_n+2R_n$.
The supported path $(z_n)$ has infinite total bridge length on every
tail. The measured limit is
\[
 ([0,1],|\cdot|,m),\qquad m=(1-w)\dd x+w\delta_0,
\]
which satisfies no $\CDe(K,N)$ condition for $K\in\mathbb R$
and $1\le N<\infty$. The refinement satisfies no $\SCD(K,N)$
condition for these parameters within the standing comparison range.
\end{proposition}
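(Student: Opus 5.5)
I will split the proof into four parts: the refinement axioms, endpoint control and the limit, failure of $\CDe$ on the limit, and failure of $\SCD$ via Theorem~\ref{thm:characterization}.

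\emph{Refinement axioms and displacement.} First I check the defining properties. The grid points have $s_n(g)=\eta_n$ away from $P_n$. Points joined to the anchor, together with $0$ and $z_n$, have scale $R_n$, since their sizes are $R_n$ and their connections have length at most $R_n$. Hence $\ell_n=R_n\to0$. The local resolution inequality \eqref{eq:local-resolution} holds because $R_{n+1}<R_n$, $\eta_{n+1}<\eta_n$, and the dependency sets contain an old element of scale at least the new one. For the supported transitions, $g\mapsto g$ or $g+\eta_n/2$ has displacement at most $\eta_n\le\ell_n$. The transition $z_n\mapsto z_{n+1}$ has displacement $|z_n-z_{n+1}|\le z_n+z_{n+1}\le C R_n$. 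This gives \eqref{eq:reference-displacement}. Since $|z_n-z_{n+1}|\ge c/(n+1)$ because of the alternating factor $2\pm1$, the path $(z_n)$ has divergent total length on every tail.

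\emph{Endpoint control and the limit.} The bridges are Euclidean and all layers lie in $[0,1]$, so $d_Z$ is the Euclidean distance and $Z=[0,1]$. For a grid reference path, the displacement is bounded by the geometric sum $\le\eta_n$. The marked path stays in $P_n$, so any two of its points are within $R_n$ of $0$. Mixed paths do not occur. This gives endpoint control with $\omega_n\le\eta_n+2R_n$. Theorem~\ref{thm:compact-limit} then yields the limit $M=[0,1]$. The grid masses converge to $(1-w)\dd x$ and the marked atom to $w\delta_0$, because $z_n\to0$. The support of $m$ is all of $[0,1]$.

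\emph{Failure of $\CDe$ on the limit.} I would use Theorem~\ref{thm:ball-mass}-type reasoning, or more directly entropy along a geodesic. Any $\CDe(K,N)$ space with full support satisfies the ball lower bound of Theorem~\ref{thm:ball-mass}, and that alone does not exclude an atom. Instead I take $\mu_0=\delta_0$, which has finite entropy $-\log w$ because $m(\{0\})=w>0$. I take $\mu_1$ to be the normalized Lebesgue measure on $[1/2,1]$. In one dimension the $W_2$ geodesic is unique: $\mu_t$ is uniform on $[t/2,t]$. Its entropy relative to $m$ is $\log(2/(t(1-w)))$, so $U_N(\mu_t)\sim C t^{1/N}$.

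The right-hand side of \eqref{eq:cde} contains the term $\sigma_\kappa^{(1-t)}(D)U_N(\delta_0)\to w^{1/N}>0$ as $t\to0$, which contradicts $U_N(\mu_t)\to0$. This works for every $K$ and $N$ in the comparison range, since the coefficients are positive.

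\emph{Failure of $\SCD$.} The hypotheses of Theorem~\ref{thm:characterization} hold: the system is a complete refinement, \eqref{eq:convergence} holds, and the tail diameter is $L=1$. Therefore $\SCD(K,N)$ would imply $\CDe(K,N)$ on the limit, which was just excluded.

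I expect the main obstacle to be verifying the locality and stationarity claims. In particular, the dependency sets must give \eqref{eq:local-resolution} for the new marked point and for the grid points entering $P_{n+1}$. The sizes here are $R_{n+1}\le R_n$, inherited from the anchor in $G_n(y)$. I would handle this by checking it case by case.
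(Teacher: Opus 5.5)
Your proposal is correct and takes the route the paper indicates for this example; the paper states the proposition without a separate proof. You verify the refinement, displacement and endpoint-control data, identify the measured limit, use the atom at $0$ (through $\mu_0=\delta_0$ and the unique $W_2$-geodesic to a uniform measure) to violate $\CDe(K,N)$ on the limit, and transfer the failure to $\SCD(K,N)$ by Theorem~\ref{thm:characterization}. Two small corrections: your claim that $d_Z$ is Euclidean with $Z=[0,1]$ is false, since $z_n\notin V_{n+1}$ and the chain formula \eqref{eq:monotone-chain} give $d_Z(z_n,M)\ge\min_{x\in V_{n+1}}|z_n-x|>0$, but the argument only needs that $d_Z$ is Euclidean on the persistent grid points (whose closure is $M\cong[0,1]$), that your endpoint bounds come from explicit chains, and that $d_Z(z_n,0)=z_n\to0$; and to exclude $\CDe(K,N)$ for every $K\in\R$ as stated, rather than only in the comparison range, replace $[1/2,1]$ by $[a/2,a]$ with $a$ small enough that $\sqrt{\kappa}\,D<\pi$, after which the same computation applies.
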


Endpoint control is compatible with infinite accumulated motion 
along a reference path. 
In this example, complete refinement ensures 
the existence of admissible sections, 
but the persistent mass carried by the marked point 
obstructs the entropy comparison for suitable endpoints. 
Thus complete refinement and endpoint control 
do not suffice for the SCD condition.

\paragraph{Cantor refinement}
Let $V_n$ consist of the left endpoints of the $2^n$ intervals
at stage $n$ of the middle-third Cantor construction.
Each representative carries its interval of length $3^{-n}$
and mass $2^{-n}$. Join consecutive representatives with their
Euclidean lengths, and use Euclidean bridges.
Each interval splits into its left and right thirds and divides
its mass equally; the representative displacements are
$0$ and $2\cdot3^{-n-1}$.

\begin{proposition}\label{prop:cantor-action-obstruction}
The cell diameters tend to zero, but complete resolution fails:
$\ell_n\ge1/3$ for $n\ge1$.
For
\[
 p=\delta_0\in\Pp(V_r),\qquad
 q=\delta_{c_r}\in\Pp(V_{r+2}),\qquad
 c_r=1-3^{-(r+2)},\qquad r\ge1,
\]
the midpoint action defect satisfies
\begin{equation}
 \min_{\nu\in\Pp(V_{r+1})}
 \bigl[2W(p,\nu)^2+2W(\nu,q)^2-W(p,q)^2\bigr]
 =\left(\frac13+3^{-(r+2)}\right)^2\ge\frac19.
 \label{eq:cantor-action-obstruction}
\end{equation}
\end{proposition}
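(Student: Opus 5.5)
The plan is a direct computation in two parts: first the scale claim, then the Dirac midpoint defect. Everything rests on the fact that all metrics involved are Euclidean.

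\emph{Metric identification.} Each $d_n$ is the path metric on points of $[0,1]$ in which consecutive representatives are joined by edges of Euclidean length. Hence $d_n(x,y)=|x-y|$. The bridges are Euclidean as well. By the chain formula \eqref{eq:monotone-chain}, every chain between two layers has length at least the coordinate distance, and the direct bridge attains it. So $d_Z(x,y)=|x-y|$ on all layers.

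\emph{Failure of complete resolution.} For $n\ge1$, the representatives $1/3-3^{-n}$ and $2/3$ are consecutive in $V_n$. The first is the left endpoint of the last stage-$n$ interval in $[0,1/3]$; the second is the first left endpoint in $[2/3,1]$. They are therefore adjacent, at distance $1/3+3^{-n}$. By \eqref{eq:local-resolution-scale}, $s_n(2/3)\ge 1/3$, so $\ell_n\ge1/3$. Meanwhile the cell sizes $3^{-n}$ tend to zero.

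\emph{Action defect.} The endpoints are Diracs, so for any $\nu$ the only couplings are products. This gives
\[
 W(p,\nu)^2=\int x^2\,\dd\nu,\qquad W(\nu,q)^2=\int (c_r-x)^2\,\dd\nu,\qquad W(p,q)=c_r .
\]
The defect then becomes the integral of a pointwise quantity:
\[
 \int\bigl[2x^2+2(c_r-x)^2-c_r^2\bigr]\dd\nu=\int(2x-c_r)^2\,\dd\nu .
\]
Its minimum over $\Pp(V_{r+1})$ is attained at a Dirac mass. Its value is $4\,\mathrm{dist}(c_r/2,V_{r+1})^2$.

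The point $c_r/2=1/2-3^{-(r+2)}/2$ lies in the removed gap $(1/3,2/3)$. Its nearest neighbours in $V_{r+1}$ are therefore $1/3-3^{-(r+1)}$ on the left and $2/3$ on the right. The corresponding distances are
\[
 \tfrac16+\tfrac52\,3^{-(r+2)}\quad\text{(left)},\qquad \tfrac16+\tfrac12\,3^{-(r+2)}\quad\text{(right)}.
\]
The right one is smaller. Thus the minimum equals $\bigl(2(\tfrac16+\tfrac12 3^{-(r+2)})\bigr)^2=(1/3+3^{-(r+2)})^2\ge1/9$, which is \eqref{eq:cantor-action-obstruction}.

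\emph{Main point to check.} The step needing care is identifying which layer points are adjacent across the central gap, for both the scale claim and the nearest-point computation. This follows because $V_n\subset[0,1/3]\cup[2/3,1]$ for every $n\ge1$, and because the extreme left endpoints of the two halves are $1/3-3^{-n}$ and $2/3$.
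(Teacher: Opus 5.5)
Your proposal is correct. It is the direct computation the paper evidently intends: the paper states this proposition without proof and attributes the defect to the central gap. With Dirac endpoints the defect reduces to $\int(2x-c_r)^2\,\dd\nu$, whose minimum $4\,\mathrm{dist}(c_r/2,V_{r+1})^2$ is attained at $x=2/3$.

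One point needs tightening. There is no direct bridge between $V_r$ and $V_{r+2}$, so the equality $W(p,q)=d_Z(0,c_r)=c_r$ must come from the chain formula through an intermediate point of $V_{r+1}$. The point $0$ itself works, since left endpoints persist under the construction ($V_n\subset V_{n+1}$).
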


Although the carried intervals shrink, 
the central gap keeps the midpoint action defect bounded away from zero. 
This explains why the geometric scale 
includes distances between adjacent elements as well as cell sizes. 
The construction lies outside the class of complete refinements: 
shrinking cells alone do not ensure the existence of admissible sections.

\paragraph{Goldberg--Coxeter subdivision for $C_{60}$}

$C_{60}$, known as the fullerene, is one of the trivalent graphs on sphere.
Its Goldberg--Coxeter subdivision has been well studied 
by Kotani, Naito and Tao in \cite{KotaniNaitoTao2022}.
Let $G_n=(V_n,E_n,F_n)$ be the trivalent surface graph and
$x_n:V_n\to\mathbb R^3$ its realization, starting from $C_{60}$ with
edge length one. For each incidence $v\in f$, introduce a vertex $v_f$,
and retain an inherited label $v^*$ for each old vertex. Join the
incidence vertices cyclically inside each old face and join $v^*$ to
its three incidence vertices. For $f=(v_0,\ldots,v_{k-1})$, solve
\begin{equation}
 3y_{f,v_i}-y_{f,v_{i-1}}-y_{f,v_{i+1}}=x_n(v_i)
 \qquad(i\bmod k),
 \label{eq:gc-short-face-rule}
\end{equation}
and set
\begin{equation}
 x_{n+1}(v_f)=y_{f,v},\qquad
 x_{n+1}(v^*)=\frac13\sum_{f\ni v}y_{f,v}.
 \label{eq:gc-short-relocation}
\end{equation}
These are the face Dirichlet and vertex relocation steps of the
geometric GC construction.

Give every edge $\{v,w\}$ the length $|x_n(v)-x_n(w)|$, and let $d_n$
be the resulting shortest-path metric. If $v_1,v_2,v_3$ are the
neighbours of $v$, define
\begin{equation}
 \begin{split}
 a_n(v)&=\frac12\left|
 (x_n(v_2)-x_n(v_1))\times(x_n(v_3)-x_n(v_1))\right|,\\
 A_n&=\sum_{v\in V_n}a_n(v),\qquad m_n(v)=a_n(v)/A_n.
 \end{split}
 \label{eq:gc-short-mass}
\end{equation}
We work with the resulting nondegenerate complete refinement,
whose layers $X_n=(V_n,d_n,m_n)$ are equipped with comparison data
satisfying the standing assumptions. 
For suitable endpoint measures, the action constraint forces
approximate midpoints into a region with too little reference mass.
The resulting entropy increase is incompatible with the midpoint
comparison, and the obstruction persists under further
refinement. Therefore we have 

\begin{proposition}[Midpoint obstruction for the GC sequence]
\label{prop:gc-short-obstruction}
Every comparison system $\Sigma$ for these layers satisfying the
assumptions above fails $\SCD(0,N)$ for every $1\le N<\infty$.
\end{proposition}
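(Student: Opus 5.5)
The plan is to argue by contradiction through the midpoint condition. By Theorem~\ref{thm:direct-midpoint}, $\SCD(0,N)$ is equivalent to $\SCDmid(0,N)$. Monotonicity (Proposition~\ref{prop:parameter-monotonicity}) gives $\SCD(0,N)\Rightarrow\SCD(0,N')$ for $N'\ge N$, so once I exhibit an obstruction whose constants do not depend on $N$, it suffices to rule out $\SCDmid(0,N)$ at each fixed $N$. Concretely, I will find $\eps_0>0$ and, for infinitely many $r$, endpoints $\mu_-\in\Pp(V_r)$ and $\mu_+\in\Pp(V_{r+2})$ such that every $\eps_0$-admissible midpoint $\nu\in\Pp(V_{r+1})$ satisfies
\[
U_{r+1}(\nu)<\tfrac12\bigl[U_r(\mu_-)+U_{r+2}(\mu_+)\bigr]-\eps_0 .
\]
The argument uses only the layers, not a limit, and must hold for every admissible choice of bridges. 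Distances between layers therefore cannot be taken from any one bridge. I will control them through Lemma~\ref{lem:gluing} together with \eqref{eq:reference-displacement} and \eqref{eq:adjacent-layer-distance}: every point of $V_{r+1}$ or $V_{r+2}$ lies within $O(\ell_r)$ of $V_r$ in $d_Z$, and the layer metrics are the realized shortest-path metrics.

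The geometric input is the GC$_{2,0}$ structure near the twelve pentagons. The face Dirichlet rule \eqref{eq:gc-short-face-rule} and the relocation \eqref{eq:gc-short-relocation} are linear and equivariant, and they act identically on every generation of pentagonal patches. After rescaling by the contraction factor of the scheme, the neighbourhood of a pentagon is therefore self-similar. It should converge to a cone point of angle deficit $\pi/3$, with mass density $m_n$ proportional to triangle areas \eqref{eq:gc-short-mass}.

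I will use endpoints built from this structure. Take $\mu_\pm$ to be normalized reference measure on two patches $B_\pm$, each of combinatorial radius a fixed multiple of the local scale, placed on opposite sides of a pentagon so that shortest paths between them pass near the cone vertex. Because $\mu_\pm$ are normalized restrictions of the reference measure, $U(\mu_\pm)=m(B_\pm)^{1/N}$ exactly. The argument then has three steps.
\begin{enumerate}
\item \emph{Localization.} Lemma~\ref{lem:midpoint-defect} and the triangle inequality in $Z$ show that any $\eps_0$-admissible $\nu$ puts all but a small fraction of its mass in the region $A=\{z: d_Z(z,B_-)+d_Z(z,B_+)\le D+\theta\}$. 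The argument is the same quadratic one used in the proof of Theorem~\ref{thm:ball-mass}, now with a defect.
\item \emph{Entropy bound.} The entropy power of $\nu$ is then bounded by the mass bound from that proof: $U_{r+1}(\nu)\le m_{r+1}(A)^{1/N}+o(1)$.
\item \emph{Mass comparison.} Near a positive-curvature cone point, the set of approximate midpoints of two opposite patches has mass strictly smaller than the average of the two patch masses, by a fixed factor. This is a Brunn--Minkowski failure at a cone of positive deficit with respect to $K=0$, obtained by unfolding the cone.
\end{enumerate}
Combining these gives $m(A)^{1/N}<\frac12[m(B_-)^{1/N}+m(B_+)^{1/N}]-c$. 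Self-similarity makes $c$ independent of $r$, and rescaling $D$ with $\ell_r$ keeps all relative errors below $c/2$.

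The main obstacle is the quantitative mass comparison. It requires explicit control of the discrete GC geometry near a pentagon: the asymptotic triangle areas and the shortest-path metric $d_n$ at that scale. It also requires uniformity in $N$, which means choosing patches whose masses make the gap survive the $1/N$-th powers.

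My first attempt would be to make $B_-$ a single vertex-scale patch and $B_+$ a much larger patch. Then, as $N$ grows, the comparison reduces to counting mass near the geodesic hull. I would compute the self-similar fixed-point geometry of the linear scheme on the pentagonal star numerically or by symmetry and verify the strict inequality at one scale. Self-similarity then propagates it to all later layers. Because the endpoints are tied to scales where $\ell_r$ is comparable to $D$, the bridge-dependent $O(\ell_r)$ errors must be absorbed into the fixed gap. Bounding the bridge constant $C$ from \eqref{eq:reference-displacement} so that it does not destroy this gap is the delicate point.
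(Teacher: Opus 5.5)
Your outline matches the argument the paper itself indicates; the paper only sketches it in the sentences before the proposition. That argument is: argue against $\SCDmid(0,N)$, force $\eps$-admissible midpoints into a region of small reference mass, and bound $U_{r+1}(\nu)$ by that mass. Your concrete version, however, has two genuine gaps.

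\emph{The scale is wrong.} The tolerances in Definition~\ref{def:midpoint} are absolute ($\eps L^2$ in the action, $\eps$ in the entropy), and they are fixed before $r\to\infty$.
\begin{itemize}
\item With patches of bounded combinatorial radius you have $D=O(\ell_r)$ and $m_r(B_-),m_{r+2}(B_+)\to0$. Hence $\tfrac12[U_r(\mu_-)+U_{r+2}(\mu_+)]\le\max\{m_r(B_-),m_{r+2}(B_+)\}^{1/N}\to0$.
\item Once this quantity is below $\eps$, \eqref{eq:midpoint-entropy} holds for every $\nu$. An $\eps$-admissible midpoint exists by Theorem~\ref{thm:admissible-existence}, so $\SCDmid$ is satisfied for these endpoints.
\item Independently, Lemma~\ref{lem:midpoint-defect} only pins $W(\mu_-,\nu)$ and $W(\nu,\mu_+)$ to within $L\sqrt\eps/2\gg D$. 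Admissibility therefore localizes nothing at that scale.
\end{itemize}
No mesh-scale configuration can contradict $\SCDmid$, and self-similarity cannot produce an $r$-independent gap $c$: any such gap is bounded by $\max\{m_r(B_-),m_{r+2}(B_+)\}^{1/N}$. This is exactly what Corollary~\ref{prop:nearest-scale} shows on the grids of Example~\ref{ex:midpoint-grid-scale}: a mesh-scale relative defect coexists with $\SCD(0,N)$. The obstruction must live at a fixed macroscopic scale, with $D$ and $\tfrac12[U_r(\mu_-)+U_{r+2}(\mu_+)]$ bounded below uniformly in $r$. At that scale the bridge-dependent errors $O(C\ell_r)$ become negligible, so the ``delicate point'' you identify is not the real issue.

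\emph{The curvature sign is wrong.} A pentagon in a hexagonal pattern carries an angle \emph{deficit}: the cone angle is $5\pi/3<2\pi$. A Euclidean cone of angle at most $2\pi$ is an Alexandrov space of nonnegative curvature and satisfies $\mathrm{CD}(0,2)$. Positive curvature focuses geodesics and enlarges midpoint sets, so the Brunn--Minkowski failure in your step~3 does not occur there; it would at an angle excess. Take the picture you describe: flat away from twelve cone points of angle $5\pi/3$. Its limit is a nonnegatively curved polyhedral sphere. Under convergence, Corollary~\ref{cor:classical-cd} and Proposition~\ref{prop:parameter-monotonicity} would then give $\SCD(0,N)$ for all $N\ge2$, the opposite of the claim. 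What is missing is a genuinely negatively curved feature of the realized geometry, coming from \eqref{eq:gc-short-face-rule}, \eqref{eq:gc-short-relocation} and the weights \eqref{eq:gc-short-mass}; the paper's closing remark points to numerically observed negative curvature. This feature must be quantified on a region of fixed size and shown to persist in $r$.

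Two smaller points.
\begin{itemize}
\item Your set $A=\{z:d_Z(z,B_-)+d_Z(z,B_+)\le D+\theta\}$ contains $B_-$ once $\theta\ge\diam B_-$, together with the part of $B_+$ within $D+\theta$ of $B_-$. It is not a midpoint region and its mass is not small. Use instead the region given by $a\approx b\approx c/2$ in the glued triple coupling, as in the proof of Theorem~\ref{thm:ball-mass}.
\item Uniformity in $N$ is not needed: monotonicity reduces the claim to arbitrarily large $N$, each with its own gap. It is also not attainable with your Dirac-versus-large-patch test, which at large $N$ would require midpoint mass below roughly $2^{-N}m(B_+)$.
\end{itemize}
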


This example detects a curvature--dimension 
obstruction directly on successive refinement layers, 
without assuming the existence of a limiting metric-measure space. 
For the specified graph metrics and area weights, 
the obstruction to $\SCD(0,N)$ persists for every finite $N$. 
This is consistent with the negative curvature observed numerically in the GC subdivisions.

\section*{Acknowledgements}
The author is grateful to Professor Motoko Kotani
and Professor Hisashi Naito for helpful discussions
and valuable suggestions.
\section*{Declarations}
\paragraph{Funding.}
This work was supported by the National Natural Science
Foundation of China (Grant No.~12301064).

\end{document}